    \let\subsubsection\subparagraph
    \title  {Exact triangles for $\SO(3)$ instanton homology of webs}
    \author {P. B. Kronheimer and T. S. Mrowka%
      \thanks{%
        The work of the first author was supported by the National
        Science Foundation through NSF grants DMS-0904589 and
        DMS-1405652. The work of the second author was supported by
        NSF grants DMS-0805841 and DMS-1406348.}}
    \address {Harvard University, Cambridge MA 02138 \\
              Massachusetts Institute of Technology, Cambridge MA 02139}
\begin{document}

\maketitle

\section{Introduction}

Let $K \subset \R^{3}$ be an unoriented web, i.e. an embedded
trivalent graph whose local model at the vertices is that of three
arcs meeting with distinct tangent directions. 
In a previous paper \cite{KM-jsharp}, the authors defined an invariant
$\Jsharp(K)$ for such webs, as an $\SO(3)$ instanton homology with
coefficients in the field $\F=\Z/2$. 
This instanton homology is functorial for \emph{foams}, which are singular
cobordisms between webs. The construction of $\Jsharp(K)$ closely
resembles an  invariant $\Isharp(K)$ defined earlier for knots and
links in \cite{KM-unknot}. Knots and links are webs without vertices;
but even for these, $\Isharp(K)$ and $\Jsharp(K)$ are different,
because $\Isharp(K)$ was defined using $\SU(2)$ representation
varieties, while $\Jsharp(K)$ uses $\SO(3)$. 
Our conventions and definitions are briefly
recalled in section~\ref{sec:recap}.

This paper is a continuation of \cite{KM-jsharp} and establishes
a type of skein relation (an exact triangle) for $\Jsharp$. 
The main result concerns three
webs $L_{2}$, $L_{1}$, $L_{0}$ which differ only inside a ball, as
shown:
\begin{equation}\label{eq:intro-L-pics}
        L_{2} = \mathfig{0.079}{figures/Graph-Xbar}, \qquad
        L_{1} = \mathfig{0.079}{figures/Graph-H}, \qquad
        L_{0} = \mathfig{0.079}{figures/Graph-I}.
\end{equation}
There are standard foam cobordisms between these (see
section~\ref{sec:statements} for a fuller description):
\[
      \cdots \longrightarrow \mathfig{0.079}{figures/Graph-Xbar}
       \longrightarrow \mathfig{0.079}{figures/Graph-H}
          \longrightarrow \mathfig{0.079}{figures/Graph-I}
                \longrightarrow \mathfig{0.079}{figures/Graph-Xbar}
                     \longrightarrow\cdots
\]
We then have:

\begin{theorem}\label{thm:intro-three-bar}
    The sequence of $\F$-vector spaces obtained by applying $\Jsharp$
    to the above sequence of webs and foams is exact:
  \[
        \cdots \longrightarrow \Jsharp(L_{2}) \longrightarrow
                 \Jsharp(L_{1}) \longrightarrow
                  \Jsharp(L_{0}) \longrightarrow
                    \Jsharp(L_{2}) \longrightarrow \cdots
  \]
\end{theorem}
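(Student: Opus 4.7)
The plan is to reduce the claimed exact triangle to Floer's surgery exact triangle for $\SO(3)$ instanton homology of 3-manifolds. By the construction recalled in Section~\ref{sec:recap}, $\Jsharp(L)$ is obtained as an instanton Floer homology $I^{\omega}(Y_{L})$ of an auxiliary closed orbifold 3-manifold $Y_{L}$ equipped with bundle data $\omega_{L}$. Since the three webs $L_{2}, L_{1}, L_{0}$ coincide outside a fixed ball $B \subset \R^{3}$, the associated 3-manifolds $Y_{L_{i}}$ agree outside a corresponding local piece, and the three local pieces sit inside a common neighborhood with a distinguished circle along which the three fillings realize a standard Dehn surgery triad. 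The foam cobordisms of \eqref{eq:intro-L-pics} restrict to products outside $B$ and to $2$-handle attachments inside, so the induced maps on $\Jsharp$ should agree with the standard Floer cobordism maps associated to the surgery triad.

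My first step would therefore be to pin down the local model: translate each of the three pictures into an explicit local orbifold piece of $Y_{L}$, exhibit the distinguished circle in a shared neighborhood whose three surgeries produce the three local models, and verify that the $\omega$-data extends across the surgery cobordisms and satisfies the non-integrality condition needed to rule out reducible flat connections. The second step is to identify the cobordism maps induced by the given foams with the Floer cobordism maps of the surgery triad. The third and final step is to invoke the surgery exact triangle for $I^{\omega}$, which in the $\SO(3)$ setting is by now standard, and which directly yields the long exact sequence.

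The hard part will be the first two steps. It is delicate to translate the foam-theoretic construction of $\Jsharp$ near a web vertex into a surgery description on a 3-manifold with $\omega$-bundle and to show that a foam with a single transition (say from a crossing to an $H$-smoothing) really does correspond, under that translation, to a single $2$-handle attachment carrying the appropriate extension of $\omega$. This requires careful bookkeeping with the orbifold locus, because the neighborhoods of the three local pictures have different singular strata. Once the local model is pinned down, the algebraic content of the proof — the two null-homotopies between successive cobordism maps and the quasi-isomorphism from a mapping cone to the third complex — follows from the usual neck-stretching, energy-filtration and Floer surgery-triangle arguments, with care taken to preserve admissibility of the $\omega$-bundle throughout.
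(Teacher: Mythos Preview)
Your proposal has a genuine gap: you are trying to invoke a ``standard'' surgery exact triangle for $I^{\omega}$, but no such off-the-shelf result applies here. The objects $Y_{L_i}$ are \emph{bifolds} --- genuine orbifolds whose singular locus is the web, with local stabilizer $V_4$ at each vertex --- not smooth $3$-manifolds with an $\omega$-line. The Floer/Braam--Donaldson/KMOS surgery triangles are stated and proved for smooth $3$-manifolds; extending them to bifolds is precisely the content of this theorem, not something one can cite. In particular, your claim that the three local models ``realize a standard Dehn surgery triad'' along a single distinguished circle is unsubstantiated: the three bifolds have different orbifold strata inside the ball (each $L_i$ has two $V_4$-points joined by an arc $\delta_i$, and these arcs lie in different positions), so there is no evident common smooth filling picture. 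Equally problematic is your identification of the foam cobordisms with $2$-handle attachments: each $\Sigma(L_{i+1},L_i)$ contains a \emph{tetrahedral point}, a $4$-dimensional orbifold singularity with stabilizer $H_4$ of order $8$, and this has no analogue in the smooth cobordism story.

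The paper does not attempt any such reduction. Instead it runs the exact-triangle machine directly in the bifold category, following the template of \cite{KM-unknot} but with new local ingredients. The composite $\Sigma_{i+2,i}$ is identified as a connect sum (at a tetrahedral point) with the closed foam $(S^4,\Psi_3)$; a small-action moduli-space calculation on $\Psi_3$ (Section~\ref{sec:connect-sums}) then shows the composite map is zero and supplies the chain homotopy $J$. The triple composite is analysed via a pentagon of broken metrics on $X_{3,0}$, and the crucial boundary term $U_{S_{2,1}}$ is shown to be chain-homotopic to the identity by a gluing argument that requires computing the representation variety $R(S_{2,1})\cong[0,\pi/2]$ of a specific web $\Gamma$ and the degree of the restriction map from a $1$-dimensional moduli space on a cylindrical-end bifold $W^+$. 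Along the way one must also check that bubbling at tetrahedral points, a new codimension-$1$ phenomenon absent in the smooth theory, contributes evenly. None of these steps is bypassed by appealing to a pre-existing triangle; they are the substance of the proof.
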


There is a variant of this exact triangle. Consider three webs
differing in the ball as in the following diagrams:
\[
        K_{2} = \mathfig{0.079}{figures/Graph-X}, \qquad
        K_{1} = \mathfig{0.079}{figures/Graph-Res1}, \qquad
        K_{0} = \mathfig{0.079}{figures/Graph-Res0}.
\]
Again, there are standard cobordisms between these.
In \cite{KM-unknot}, we established an exact triangle in $\Isharp$
relating these three. The corresponding sequence of vector spaces
$\Jsharp(K_{i})$ do \emph{not} form an exact triangle. Instead, there
is an exact triangle involving $L_{i+2}$, $K_{i+1}$ and $K_{i}$, for
each $i$ (with the indices interpreted cyclically modulo $3$). Thus:

\begin{theorem}\label{thm:intro-one-bar}
    For each $i=0, 1, 2$, we have an exact sequence of $\F$-vector
    spaces,
  \[
        \cdots \longrightarrow \Jsharp(L_{i+2}) \longrightarrow
                 \Jsharp(K_{i+1}) \longrightarrow
                  \Jsharp(K_{i}) \longrightarrow
                    \Jsharp(L_{i+2}) \longrightarrow \cdots,
  \]
    in which the maps are obtained by applying $\Jsharp$ to standard
    foam cobordisms.
\end{theorem}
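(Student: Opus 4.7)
The strategy I would pursue is to derive Theorem~\ref{thm:intro-one-bar} from Theorem~\ref{thm:intro-three-bar} algebraically, using additional identifications coming from standard foams that move between the $L$ and $K$ pictures. The guiding principle is that each $L_j$ is obtained from $K_j$ by insertion of an extra bar inside the ball, and this operation should be realized by explicit foam cobordisms whose interaction with the three-bar triangle can be pinned down. In particular, the failure of the naive $K_i$ triangle in $\Jsharp$ should be measured precisely by the extra bar.

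First, I would identify auxiliary cobordisms: for each $j$, a bar-insertion foam $K_j \to L_j$ and a bar-removal foam $L_j \to K_j$, together with decompositions of the standard foams $L_{j+1} \to L_j$ appearing in Theorem~\ref{thm:intro-three-bar} in terms of these. The aim is to factor each map $\Jsharp(L_{j+1}) \to \Jsharp(L_j)$ through a composition involving $\Jsharp(K_a) \to \Jsharp(K_b)$ and the bar-insertion/removal maps. With such factorizations in hand, I would invoke an octahedral-type argument to identify $\Jsharp(L_{i+2})$ with (a shift of) the mapping cone of $\Jsharp(K_{i+1}) \to \Jsharp(K_i)$, from which the mixed exact triangle of Theorem~\ref{thm:intro-one-bar} follows. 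The three cases $i=0,1,2$ would then correspond to the three choices of which vertex of the three-bar triangle to retain in barred form, with the other two replaced by their $K$-counterparts, the cyclic symmetry of the statement reflecting the cyclic symmetry of Theorem~\ref{thm:intro-three-bar}.

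The principal obstacle I anticipate is establishing the cone identification at the chain level: this is really a statement about how the cobordism maps decompose on representatives of the underlying Floer complexes, and is unlikely to follow from purely homology-level computations. In particular, one must verify that the algebra of bar-insertion and bar-removal foams (together with the neck-cutting and other local foam relations coming from the construction of $\Jsharp$) is rich enough to force the sought-after factorizations. If a clean algebraic reduction from Theorem~\ref{thm:intro-three-bar} proves out of reach, the fallback would be to re-run the surgery-exact-triangle argument used to prove Theorem~\ref{thm:intro-three-bar} in a modified local configuration, obtaining the mixed triangle directly; this would duplicate some analytic work but would largely parallel already-established arguments rather than introduce genuinely new ingredients.
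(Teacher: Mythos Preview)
Your overall philosophy—deduce the mixed triangle from the three-bar triangle rather than rerun the analytic argument—matches the paper's. But the mechanism the paper uses is quite different from yours, and the difference is exactly what closes the gap you yourself flag.

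You propose to work with bar-insertion and bar-removal foams between $K_j$ and $L_j$, factor the maps of the $L$-triangle through the $K$-side, and then invoke an octahedral/mapping-cone argument. As you anticipate, this runs into the problem that $\Jsharp$ is only defined at the homology level here; there is no triangulated-category structure in which an octahedral axiom is available, and producing a chain-level cone identification would require essentially redoing the analytic proof. The paper sidesteps this entirely. Instead of relating $L_j$ to $K_j$, it attaches a \emph{further} edge $e$ to each $L_j$ to obtain webs $\tilde L_j$, applies Theorem~\ref{thm:intro-three-bar} to the triple $(\tilde L_2,\tilde L_1,\tilde L_0)$, and then uses the triangle and square relations (local identities already established for $\Jsharp$) to obtain \emph{direct-sum} decompositions
\[
\Jsharp(\tilde L_0)\cong \Jsharp(L_0),\qquad
\Jsharp(\tilde L_1)\cong \Jsharp(K_1)\oplus\Jsharp(K_0),\qquad
\Jsharp(\tilde L_2)\cong \Jsharp(K_2)\oplus\Jsharp(K_0).
\]
The real work is then Proposition~\ref{prop:same-maps}: under these identifications the maps of the $\tilde L$-triangle become block matrices $\begin{bmatrix}\tau & 0\\ 0 & 1\end{bmatrix}$, $\begin{bmatrix}\tau & 0\end{bmatrix}$, $\begin{bmatrix}\tau \\ 0\end{bmatrix}$, so the $\tilde L$-exact sequence splits off a trivial summand and what remains is precisely the claimed $L_{i+2},K_{i+1},K_i$ triangle. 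The block-triangularity is proved by identifying the relevant composite foams as connect sums with the model foams $\Psi_2$ and $\Psi$ of section~\ref{sec:connect-sums} and invoking Propositions~\ref{prop:sum-with-Psi-2-3} and~\ref{prop:sum-with-double-Mobius}.

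So the missing idea in your plan is the extra-edge trick together with the square relation: it converts the problem from a cone comparison (which would need chain-level control you do not have) into a homology-level direct-sum decomposition plus a matrix computation that is settled by connect-sum calculations. Your fallback of rerunning the surgery argument is indeed viable—the paper notes this explicitly, with $\Psi_0,\Psi_1$ replacing $\Psi_2,\Psi_3$—but the paper's route avoids repeating that analysis.
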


\begin{figure}[h]
    \begin{center}
        \includegraphics[scale=.46]{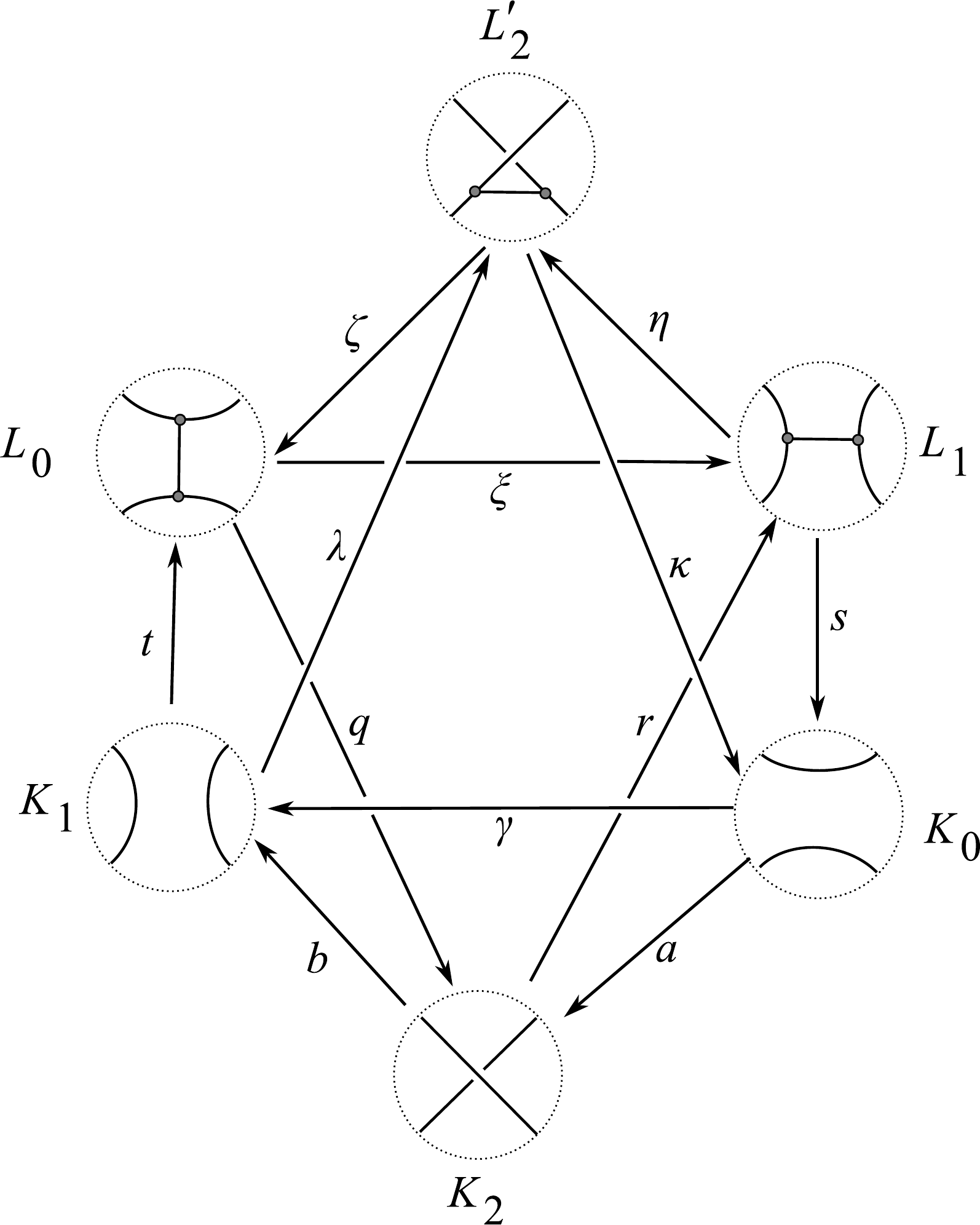}
    \end{center}
    \caption{\label{fig:J-octahedron}
    An octahedral diagram whose faces are four exact triangles and
    four commutative triangles.}
\end{figure}

The four exact triangles contained in the two theorems above can be
arranged as four of the triangular faces in an octahedral diagram, a
particular realization of the diagram for the octahedral axiom for a
triangulated category. In the diagram, Figure~\ref{fig:J-octahedron},
the top vertex $L'_{2}$ has a crossing of a different sign from the
picture of $L_{2}$. The exact triangles in this octahedron are
\[
\begin{aligned}
            \cdots &\longrightarrow \Jsharp(K_{2}) \longrightarrow
                 \Jsharp(K_{1}) \longrightarrow
                  \Jsharp(L_{0}) \longrightarrow
                    \Jsharp(K_{2}) \longrightarrow \cdots \\
            \cdots &\longrightarrow \Jsharp(K_{2}) \longrightarrow
                 \Jsharp(L_{1}) \longrightarrow
                  \Jsharp(K_{0}) \longrightarrow
                    \Jsharp(K_{2}) \longrightarrow \cdots \\
            \cdots &\longrightarrow \Jsharp(K_{0}) \longrightarrow
                 \Jsharp(K_{1}) \longrightarrow
                  \Jsharp(L'_{2}) \longrightarrow
                    \Jsharp(K_{0}) \longrightarrow \cdots \\
            \cdots &\longrightarrow \Jsharp(L_{0}) \longrightarrow
                 \Jsharp(L_{1}) \longrightarrow
                  \Jsharp(L'_{2}) \longrightarrow
                    \Jsharp(L_{0}) \longrightarrow \cdots .
\end{aligned}
\]
The last two are duals of exact triangles in
Theorems~\ref{thm:intro-one-bar} and~\ref{thm:intro-three-bar}
respectively. The other four faces of octahedron become commutative
diagrams of $\F$-vector spaces on applying $\Jsharp$. For example, the
triangle
\[
  \xymatrix{  \Jsharp(K_{1})  & & \Jsharp(K_{0}) \ar[dl] \ar[ll]\\
        & \Jsharp(K_{2}) \ar[ul]&}               
\]
 is a commutative diagram.
Finally, the two
different composites from  $K_{2}$ to $L'_{2}$,
\[
\begin{aligned}
    \mathfig{0.079}{figures/Graph-X} & \longrightarrow \mathfig{0.079}{figures/Graph-H}
    \longrightarrow \mathfig{0.079}{figures/Graph-Xbardag} \\
  \mathfig{0.079}{figures/Graph-X} & \longrightarrow \mathfig{0.079}{figures/Graph-Res1}
    \longrightarrow \mathfig{0.079}{figures/Graph-Xbardag},
\end{aligned}
\]
give the same map $\Jsharp(K_{2}) \to \Jsharp(L'_{2})$, with a similar
(and equivalent)
statement about the two composites from $L'_{2}$ to $K_{2}$. 

Fuller versions of these results are stated in
section~\ref{sec:statements}, where we also broaden the scope of the
theorems a little by discussing webs embedded in arbitrary oriented
$3$-manifolds, rather than in $\R^{3}$. 

\subparagraph{Acknowledgement.} The authors are very grateful for the support of the Radcliffe
Institute for Advanced Study, which provided them with the opportunity
to pursue this project together as Fellows of the Institute during the
academic year 2013--2014.

\section{Review of $\SO(3)$ instanton homology}
\label{sec:recap}

We briefly recall some of the constructions which are described more
fully in \cite{KM-jsharp}. If $Z$ is an $n$-dimensional orbifold, and
$z\in Z$, then we write $H_{z}$ for the local stabilizer group at
$z$. All our orbifolds will be orientable, so $H_{z}$ is a subgroup of
$\SO(n)$ acting effectively on $\R^{n}$.

\paragraph{Bifolds.} For $m\le n$, let $H_{m}\subset \SO(m)$ be the
elementary abelian $2$-group of order $2^{m-1}$ consisting of diagonal
matrices of determinant $+1$ whose diagonal entries are $\pm
1$. Regard $H_{m}$ also as a subgroup of $\SO(n)$ for $m\le n$. We
call $Z$ an $n$-dimensional \emph{bifold} if its local stabilizer
groups $H_{z}\subset \SO(n)$ are conjugate to $H_{m}$ for some $m\le
n$. All our bifolds will be equipped with Riemannian metrics, in the
orbifold sense.

\paragraph{Webs and foams.} The underlying topological space of a
bifold is a manifold $X$, and the set of points with non-trivial local
stabilizer is a codimension-$2$ subcomplex of $X$. In the case of
dimension $2$, this subcomplex is a set of points. In the case $n=3$,
it is a trivalent graph, which we refer to as a web.

In the case $n=4$, the points with $H_{z}\ne 1$ form a $2$-complex which
we call a foam. A foam can have tetrahedral points, where the local
stabilizer is $H_{4}$. The set of points with $H_{z}\cong H_{3}$ is a
union of arcs and circles: these are the seams, which together with
the tetrahedral points comprise a $4$-valent graph. The remainder of the
foam is a $2$-manifold whose components are the \emph{faces}.

A pair $(Y,K)$ consisting of a smooth $3$-manifold and smoothly embedded
web can be used to construct a corresponding bifold $\check Y$. The
same is true for a pair $(X,\Phi)$ consisting of a $4$-manifold and an
embedded foam: we may write the corresponding bifold as $\check X$. 
There is a cobordism 
category in which the objects are closed,
oriented $3$-dimensional bifolds $\check Y$ with bifold metrics, and in which the
morphisms are isomorphism classes of oriented $4$-dimensional bifolds
$\check X$
with boundary. Equivalently, we have a category in which the objects
are $3$-manifolds $(Y,K)$ with embedded webs, and the morphisms are
$4$-manifolds $(X,\Phi)$
with embedded foams.

\paragraph{Bifold connections.} By a \emph{bifold connection} over a
bifold $\check X$, we mean an $\SO(3)$ orbifold vector bundle
$E\to\check X$ equipped with an orbifold $\SO(3)$ connection $A$,
subject to the constraint that at each point $x$ where $H_{x}$ has
order $2$, the local action of $H_{x}$ on the $\SO(3)$ fiber is
non-trivial. This condition determines the local model uniquely at
other orbifold points. In particular, if $\check X$ is $4$-dimensional
and $x$ belongs to a seam of the corresponding foam, so that $H_{x}$
is the Klein $4$-group, then the representation of $H_{x}$ on the
$\SO(3)$ fiber is the inclusion of the standard Klein $4$-group
$V\subset \SO(3)$.

\paragraph{Marking data.} Bifold connections may have non-trivial
automorphisms. For example, if the monodromy group of the connection
is the $4$-group $V$, then the automorphism group is also $V$. In order
to have objects without automorphisms, we introduce \emph{marked}
bifold connections.

By \emph{marking data} $\mu$ on a bifold $\check X$, we mean a pair
$(U_{\mu}, E_{\mu})$ consisting of an open set $U_{\mu}$ and an
$\SO(3)$ bundle $E_{\mu} \to U_{\mu}\cap \check X^{o}$ (where $\check
X^{o}$ is the locus of non-orbifold points).  A \emph{marked} bifold
connection is a bifold connection $(E,A)$ on $\check X$ together with
a choice of an equivalence class of an isomorphism $\sigma$ from
$E|_{U_{\mu}\cap \check X^{o}}$ to $E_{\mu}|_{U_{\mu}\cap\check
  X^{o}}$. Two isomorphism $\sigma_{1}$ and $\sigma_{2}$ are
equivalent if $\sigma_{1}\comp\sigma_{2}^{-1} : E_{\mu}\to E_{\mu}$
lifts to the determinant-$1$ gauge group, i.e.~a section of the
associated bundle with fiber $\SU(2)$. The marking data is
\emph{strong} if the automorphism group of every $\mu$-marked bifold
connection is trivial. In dimension $3$, a sufficient condition for
$\mu$ to be strong is that $U_{\mu}$ contains a point $x$ with
$H_{x}=V$ (a vertex of the corresponding web), or that $U_{\mu}\check
X^{o}$ contains a torus on which $w_{2}(E_{\mu})$ is non-zero.

\paragraph{Instanton homology.} Let $\check Y$ be a closed, connected,
oriented $3$-dimensional bifold with strong marking data $\mu$. The set
of isomorphism classes of $\mu$-marked bifold connections of Sobolev
class $L^{2}_{k}$, for large enough $k$, is parametrized by a Hilbert
manifold $\bonf_{k}(\check Y;\mu)$. Using the perturbed Chern-Simons
functional, one constructs a Morse complex, whose homology we call the
$\SO(3)$ instanton homology. It is defined with coefficients
$\F=\Z/2$. We use the notation $J(\check Y;\mu)$. If $(Y,K)$ is a pair
consisting of a $3$-manifold and an embedded web, we similarly write
$J(Y,K;\mu)$.

Let $\check X$ be an oriented bifold cobordism from $\check{Y_{1}}$ to
$\check Y_{2}$, let $\nu$ be marking data for $\check X$, and let
$\mu_{i}$ be the restriction of $\nu$ to $\check Y_{i}$. If $\mu_{i}$
is strong, for $i=1, 2$, then $(\check X, \nu)$ gives rise to a linear
map
\[
      J(\check X;  \nu) : J(\check Y_{1};\mu_{1}) \to J(\check Y_{2};\mu_{2}).
\]
In general, the map which $J$ assigns to composite cobordism may not
be the composite map. However, the composition law does hold if the
marking data $\nu$ on the two cobordisms satisfies an extra
condition. In this paper, our cobordisms will always contain product
cobordisms in the neighborhoods of the marking data, and $\nu$ will
always be a product $[0,1]\times \mu_{1}$. This restriction is
sufficient to ensure that the composition law holds.

\paragraph{The construction of $\Jsharp$.} Let $K$ be a compact web in
$\R^{3}$. From $K$, we form a new web $K^{\sharp}\subset S^{3}$ as the
disjoint union of $K$ and a Hopf link $H$ contained in a ball near the
point at infinity. As marking data for $(S^{3}, K^{\sharp})$, we take
$U_{\mu}$ to be the ball containing $H$, disjoint from $K$, and we
take $E_{\mu}$ to have $w_{2}\ne 0$ on the torus which separates the
two components of the Hopf link. This marking data is strong. We
define
\[
\Jsharp(K) = J(S^{3}, K^{\sharp};\mu). 
\]
Given a foam cobordism $\Phi\subset [0,1]\times\R^{3}$ from $K_{1}$ to
$K_{2}$, we similarly construct a new foam $\Phi^{\sharp}$ as
$\Phi\cup ([0,1]\times H)$, with marking data $\nu=[0,1]\times\mu$. In
this way, $\Phi$ gives rise to a linear map
\[
   \Jsharp(\Phi) : \Jsharp(K_{1}) \to \Jsharp(K_{2}).
\]
In this way we obtain a functor $\Jsharp$ with values in the category
of $\F$-vector spaces, from a category whose objects are webs in
$\R^{3}$ and whose morphisms are isotopy classes of foams with
boundary in intervals $[a,b]\times \R^{3}$.

\section{Statement of the results}
\label{sec:statements}

We state now the version of the Theorem~\ref{thm:intro-three-bar}
that we shall prove.  Instead of $\R^{3}$, we consider a closed,
oriented $3$-manifold $Y$, and three webs $L_{2}$, $L_{1}$, $L_{0}$ in $Y$ which are
identical outside a standard ball $B\subset Y$. As in
Theorem~\ref{thm:intro-three-bar}, we
suppose that, inside the ball $B$, they look as shown in
\eqref{eq:intro-L-pics}. 
As with other variants of Floer's exact triangle, there is more symmetry
between the three pictures than immediately meets the eye. The same
pictures are drawn from a different point of view in the bottom row of
Figure~\ref{fig:J-skein-three-bars}, to exhibit the cyclic symmetry
between the three.
 We write $K_{i}$ (as in the introduction) for the web
obtained from $L_{i}$ by forgetting the two vertices inside the ball
and deleting the edge joining them. Similar picture of the these webs are shown in
Figure~\ref{fig:J-skein}. 
Let $\mu = (U_{\mu},
E_{\mu})$ be strong marking data with $U_{\mu}$ disjoint from $B$. 
We may regard $\mu$ as marking data for all three of the pairs $(Y,
L_{i})$ and all three of the pairs $(Y, K_{i})$.

\begin{figure}[h]
    \begin{center}
        \includegraphics[scale=.5]{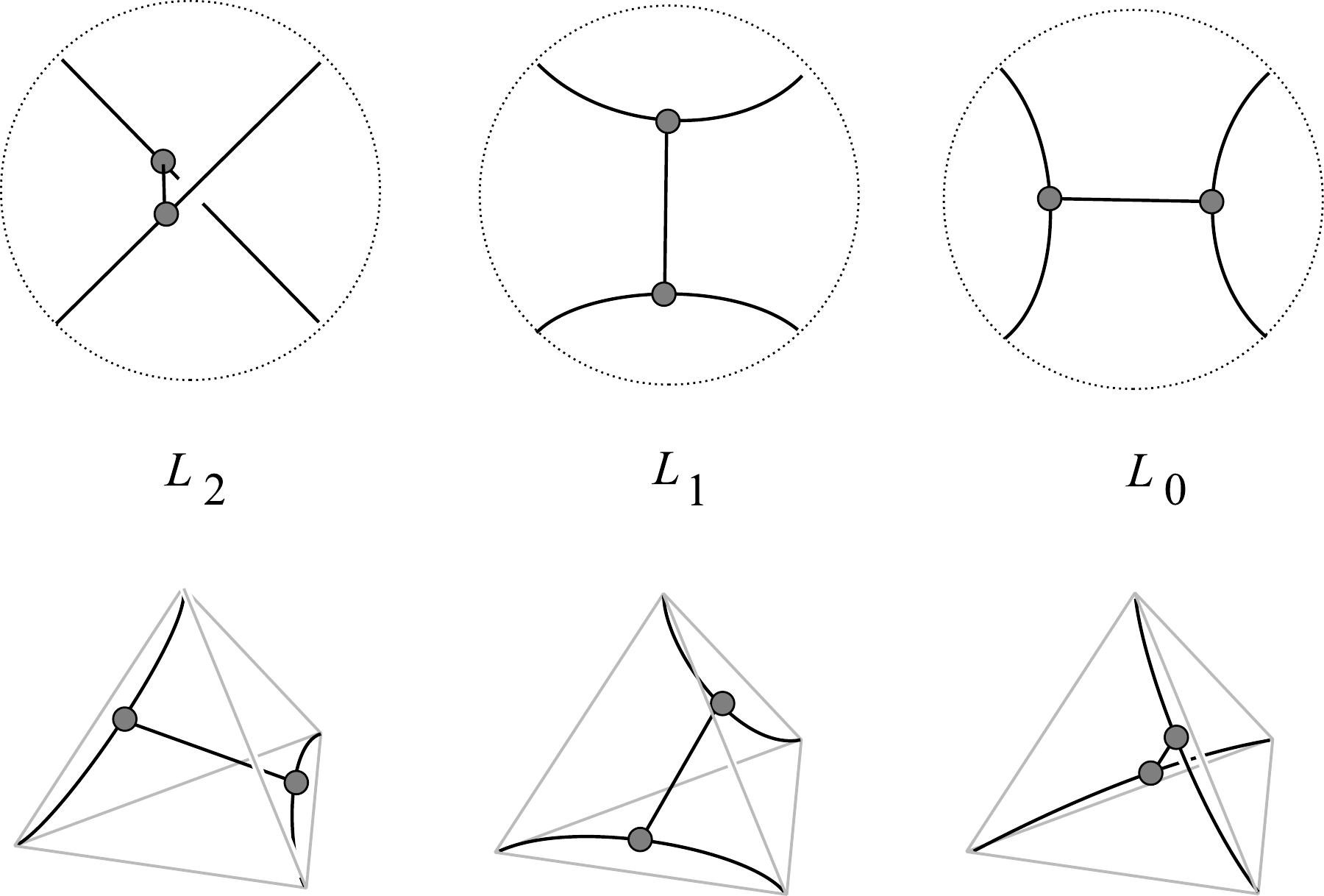}
    \end{center}
    \caption{\label{fig:J-skein-three-bars}
   The three webs $L_{i}$, from two different points of view.}
\end{figure}

\begin{figure}[h]
    \begin{center}
        \includegraphics[scale=.5]{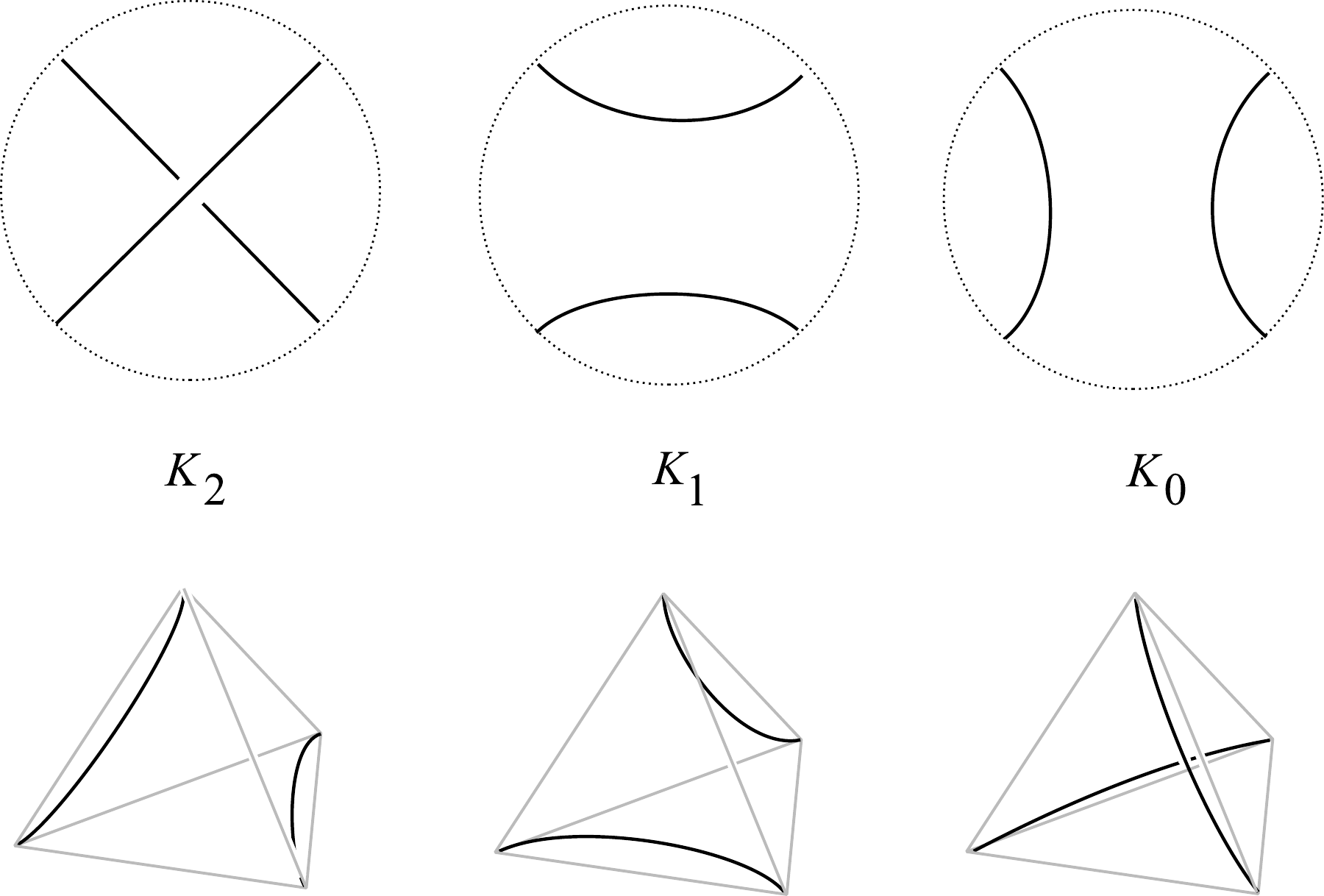}
    \end{center}
    \caption{\label{fig:J-skein}
    The three webs in $Y$ obtained from the $L_{i}$ by removing an edge.}
\end{figure}

For each $i$, there is a standard cobordism from $K_{i+1}$ to $K_{i}$
given by a foam $\Sigma(K_{i+1}, K_{i})$ in $I \times Y$.  (The index
$i$ is to be interpreted cyclically.) The cobordism in each case is
the addition of a standard $1$-handle.  There are also standard
cobordisms to and from the $L_{i}$, which we write as $\Sigma(L_{i+1},
K_{i})$, $\Sigma(K_{i+1}, L_{i})$, and $\Sigma(L_{i+1}, L_{i})$. These
are all obtained from $\Sigma(K_{i+1},K_{i})$ by adding one or two
disks.  Pictures of $\Sigma(L_{1}, K_{0})$ and $\Sigma(L_{1}, L_{0})$
are given in Figure~\ref{fig:cobordism}. The latter foam has a single
tetrahedral point. In the picture of the cobordism from $L_{1}$ to
$L_{0}$, we have labeled as $\delta_{1}$  and $\delta_{0}$ the edges
of the webs $L_{1}$ and $L_{0}$ which are contained in the interior of
the ball. These edges appear on the boundary of disks $\Delta^{+}_{1}$
and $\Delta^{-}_{0}$ in the foam $\Sigma(L_{1}, L_{0})$. The
tetrahedral point is the unique intersection point
$\Delta^{+}_{1}\cap\Delta^{-}_{0}$.

\begin{figure}[h]
    \begin{center}
        \includegraphics[scale=.35]{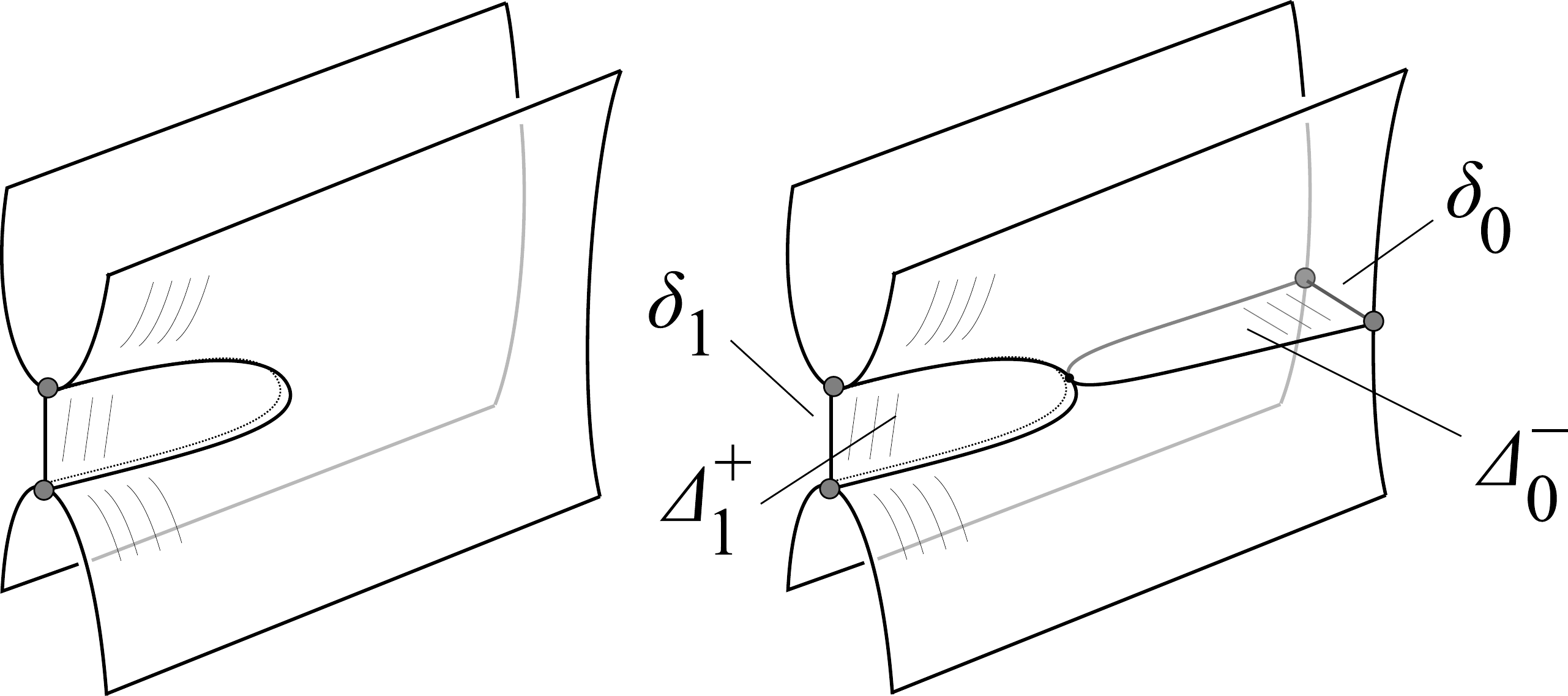}
    \end{center}
    \caption{\label{fig:cobordism}
    The cobordisms from $L_{1}$ to $K_{0}$ (left) and from $L_{1}$ to
    $L_{0}$ (right).}
\end{figure}

The standard cobordisms give maps such as
\[
                  J(I\times Y, \Sigma(L_{i+1}, L_{i}) ; \nu)
    : J(Y, L_{i+1}; \mu) \to  J(Y, L_{i}; \mu)
\]
where the marking data $\nu$ is the product $I\times \mu$. Thus we
have a sequence of maps with period $3$,
\begin{equation}\label{eq:J-exact-triangle}
        \dots \to J(Y, L_{2}; \mu) \to  J(Y, L_{1}; \mu) \to  J(Y,
        L_{0}; \mu) 
   \to  J(Y, L_{2}; \mu) \to  \cdots
\end{equation}

\begin{theorem}
    \label{thm:exact-triangle}
The above sequence is exact.
\end{theorem}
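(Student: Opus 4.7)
The plan is to establish the exact triangle at the chain level via the now-standard Floer-theoretic template, adapted to the setting of $\SO(3)$ bifold connections. Write $C_i = C(Y, L_i;\mu)$ for the Morse complex underlying $J(Y,L_i;\mu)$. Each foam cobordism $\Sigma(L_{i+1}, L_i)$, equipped with the product marking data $\nu = I\times\mu$ and a cylindrical-end metric, gives a chain map $f_i : C_{i+1}\to C_i$ by counting bifold instantons. The goal is to show that the periodic sequence of maps $f_i$ induces an exact sequence on homology.

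The second step is to produce chain homotopies $h_i : C_{i+2}\to C_i$ witnessing $f_i\circ f_{i+1}\simeq 0$. I would form the composite cobordism $\Sigma(L_{i+1}, L_i)\circ \Sigma(L_{i+2}, L_{i+1})$ from $L_{i+2}$ to $L_i$ and introduce a one-parameter family of metrics interpolating between two degenerations: stretching the neck at $(Y, L_{i+1})$ recovers $f_i\circ f_{i+1}$ by the standard gluing theorem, while collapsing the interior ball $B\times I$ exhibits the composite foam inside $B$ as containing a closed, identifiable subfoam — in particular the two disks $\Delta^{+}_{i+1}$ and $\Delta^{-}_{i}$ of Figure~\ref{fig:cobordism} fit together in a configuration whose local contribution can be computed in a standard model and shown to vanish. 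Counting ends of the corresponding one-parameter moduli space then yields the chain homotopy $h_i$.

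Exactness would then be deduced from the standard algebraic fact that, given three chain maps arranged in a cycle together with null-homotopies of the pairwise composites, the induced homology sequence is exact if and only if the mapping-cone map $\Psi : \mathrm{Cone}(f_{i+1})\to C_i$ built from $f_i$ and $h_i$ is a quasi-isomorphism for one (equivalently all) $i$. To verify this I would construct a candidate inverse from the data $(f_{i-1}, h_{i-1})$ on the other side of the triangle and then show that the two composites are chain-homotopic to the identity. Geometrically, this amounts to analyzing bifold instanton moduli on the triple-composite cobordism from $L_i$ to itself, parameterized by a two-dimensional family of metrics whose boundary degenerations form a pentagon; the identification of the codimension-one ends produces an identity of the shape $h_i f_{i+2} + f_i h_{i+1} \simeq \mathrm{id}_{C_i}$, which suffices.

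The main obstacle is this last pentagon argument: one must establish compactness, transversality, and the correct identification of the ends of the two-parameter moduli space of bifold instantons on the triple composite. The geometric input that makes it feasible is the cyclic symmetry of the three local pictures emphasized in Figure~\ref{fig:J-skein-three-bars}, which exhibits the triple composite inside $B$ as a specific closed foam whose instanton count in a model neighborhood can be determined, reducing the global problem to an excision-and-neck-stretching computation. The chain-map property of the $f_i$, the existence of the null-homotopies $h_i$, and the algebra of the mapping-cone criterion are otherwise routine adaptations of the corresponding arguments for $\Isharp$ in the earlier work \cite{KM-unknot}; the genuinely new work is in verifying that the $\SO(3)$ bifold framework does not obstruct any of these moduli-space identifications.
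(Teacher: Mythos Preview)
Your outline follows the same template as the paper's proof: chain maps from the elementary cobordisms, a one-parameter family of metrics on the double composite to build the null-homotopy, and a pentagonal two-parameter family on the triple composite to obtain the relation $FJ+JF+DK+KD = (\text{something chain-homotopic to the identity})$.  The algebraic criterion you invoke (quasi-isomorphism of the cone) is equivalent to the one the paper uses.

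The gaps are at exactly the two places where the real geometric input lives, and your account does not supply it.

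First, for the null-homotopy: the second degeneration in the one-parameter family is not ``collapsing the interior ball'' but stretching along the boundary $S_{1}$ of a regular neighbourhood of the disk $\Delta_{1}=\Delta_{1}^{-}\cup\Delta_{1}^{+}$.  The paper shows (Lemma~\ref{lem:two-step-connect-sum}) that this exhibits the double composite as a connected sum at a tetrahedral point with the specific closed foam $(S^{4},\Psi_{3})$, and then computes (Proposition~\ref{prop:sum-with-Psi-2-3}) that the smallest-action moduli space on $\Psi_{3}$ is a single $O(2)$ solution, so that gluing produces a \emph{double} cover of the original moduli space and hence zero in characteristic $2$.  Your phrase ``local contribution can be computed in a standard model and shown to vanish'' hides this; without naming $\Psi_{3}$ and doing the instanton count there, there is no argument.

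Second, and more seriously, for the pentagon: the five hypersurfaces are $Y_{2},\,S_{2},\,S_{1},\,Y_{1},\,S_{2,1}$, and after the $\Psi_{3}$ analysis kills the $S_{1}$ and $S_{2}$ edges one is left with $DK+KD+FJ+JF=U_{S_{2,1}}$.  Showing $U_{S_{2,1}}$ is chain-homotopic to the identity is the heart of the proof (Section~7 of the paper): one must identify the piece $W$ split off along $S_{2,1}$, compute the representation variety $R(S_{2,1})\cong[0,\pi/2]$, and show that the one-dimensional moduli space on $W^{+}$ over the family of metrics maps properly to the interior of this interval with degree $1$ mod $2$.  Your sketch does not indicate any of this, and the assertion that the triple composite ``reduces to an excision-and-neck-stretching computation'' is not enough.

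Finally, one genuinely new feature of the $\SO(3)$ bifold setting that you do not mention: bubbling at a tetrahedral point is a codimension-$1$ phenomenon and contributes boundary points to the one-dimensional parametrized moduli spaces.  The paper disposes of these by observing that such endpoints come in pairs, but this must be checked, not assumed away as a routine adaptation of the $\SU(2)$ argument.
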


As a special case, we can consider webs in $\R^{3}$ and apply the
functor $\Jsharp$, in which case we deduce the version in the
introduction, Theorem~\ref{thm:intro-three-bar}.

There is a similar generalization of Theorem~\ref{thm:intro-one-bar} in
the  setting of foams in a $3$-manifold $Y$ with strong marking,
whose statement is easily formulated. 
It will turn out that there
is an argument that allows
Theorem~\ref{thm:intro-one-bar} to be deduced from
Theorem~\ref{thm:intro-three-bar} (or, in the more general form, from
Theorem~\ref{thm:exact-triangle}). We will therefore focus on
Theorem~\ref{thm:exact-triangle} to begin with.

\section{Calculations for some connected sums}
\label{sec:connect-sums}

The quotient of $-\CP^{2}$ by the action of complex conjugation,
$[z_{1}, z_{2}, z_{3}]\mapsto [\bar z_{1}, \bar z_{2},\bar z_{3}]$, is
an orbifold $S^{4}$ containing as branch locus the image $R\subset
S^{4}$ of $\RP^{2}\subset -\CP^{2}$. If $L$ is a complex line in
$-\CP^{2}$ defined by real linear equation in the homogeneous
coordinates, then the image of $L$ in the quotient is a disk $D$ whose
boundary is a real projective line in the branch locus $R$. Given $n$
such lines in $-\CP^{2}$, say
\[
        L_{1} ,\dots, L_{n},
\]
we obtain $n$ disks $D_{1} ,\dots, D_{n}$ in $S^{4}$ whose interiors
are disjoint and whose boundaries meet in pairs at points of $R$. The
union
\begin{equation}\label{eq:Psi-foams}
         \Psi_{n} = R \cup D_{1} \cup \cdots \cup D_{n}
\end{equation}
is a foam in $S^{4}$. This description does not specify the topology
of $\Psi_{n}$
uniquely when $n$ is large, because there are different combinatorial
configurations of real projective lines. 
The cases of most interest to us are $\Psi_{0}$
(which is just the real projective plane $R$ in $S^{4}$, with $R\cdot
R=2$), and the foams $\Psi_{1}$, $\Psi_{2}$ and $\Psi_{3}$. The foam
$\Psi_{2}$ has  a
single tetrahedral point where $\partial D_{1}$ meets $\partial D_{2}$
in $R$, while $\Psi_{3}$ has three tetrahedral points.

\begin{lemma}
    The formal dimension of the moduli space of anti-self-dual bifold
    connections of action $\kappa$ on $(S^{4}, \Psi_{n})$ is given by
    \[
                8 \kappa - (1 - n/2)^{2}.
    \]
     In particular, we have
    \begin{enumerate}
    \item $8\kappa - 1$ for $\Psi_{0}$;
    \item $8\kappa - 1/4$ for $\Psi_{1}$;
    \item $8\kappa $ for $\Psi_{2}$;
    \item $8\kappa - 1/4$ for $\Psi_{3}$;
    \end{enumerate}
\end{lemma}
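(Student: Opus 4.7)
The plan is to apply the general formal-dimension formula for moduli spaces of anti-self-dual bifold connections on a closed oriented $4$-bifold that was established in the companion paper \cite{KM-jsharp}. For a $4$-bifold $\check X$ with underlying $4$-manifold $X$ and foam $\Phi$, that formula expresses the formal dimension as $8\kappa$ minus $3(1-b_{1}(X)+b_{+}(X))$, together with universal linear contributions from each face of $\Phi$ (involving its Euler characteristic and self-intersection in $X$), from each seam (involving its Euler characteristic), and a fixed constant per tetrahedral point.

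First I would record the topological data of $(S^{4},\Psi_{n})$. On $S^{4}$ we have $b_{1}=b_{+}=0$, so the ambient contribution is $-3$. The branch surface $R=\RP^{2}$ has $\chi(R)=1$ and $R\cdot R=2$; each disk $D_{i}$ has $\chi(D_{i})=1$ and zero self-intersection in the relative sense appropriate to a face with $\partial D_{i}\subset R$; and for $n\ge 2$ there are $\binom{n}{2}$ tetrahedral points, arising as the pairwise transverse intersections of the circles $\partial D_{i}$ inside $R$. Each circle $\partial D_{i}$ becomes a seam, cut into $n-1$ arcs by the tetrahedral points lying on it when $n\ge 2$. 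The face decomposition of $\Psi_{n}$ consists of the $D_{i}$ together with the connected components of $R\setminus\bigcup\partial D_{i}$.

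Second I would substitute and simplify. The two small cases $\Psi_{0}$ and $\Psi_{1}$ (each with no seams and no tetrahedral points) pin down the face contributions of $R$ and of a single disk $D_{i}$ from the required answers $8\kappa-1$ and $8\kappa-1/4$. The cases $n=2$ and $n=3$ then constitute a check on the seam and tetrahedral-point coefficients. Collecting all terms should produce the polynomial identity $8\kappa-(1-n/2)^{2}$ in $n$.

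The main obstacle will be verifying that the total is independent of the combinatorial type of the configuration of real projective lines when $n$ is large, since the cell decomposition of $R\setminus\bigcup\partial D_{i}$ depends on that choice. The required cancellation among varying face Euler characteristics, varying seam-arc counts, and the constant per tetrahedral point should come from the Euler-characteristic identity
\[
\chi(\Psi_{n}) = \chi(R) + \sum_{i=1}^{n}\chi(D_{i}) - \chi\Bigl(\bigcup_{i=1}^{n}\partial D_{i}\Bigr) + \#\{\text{tetrahedral points}\}.
\]
As an independent sanity check, one may pass to the branched double cover $\pi\colon -\CP^{2}\to S^{4}$: a bifold connection on $(S^{4},\Psi_{n})$ lifts to a singular $\SO(3)$ instanton of action $2\kappa$ on $(-\CP^{2},L_{1}\cup\cdots\cup L_{n})$ with holonomy $-1$ around each complex line, and the Galois-equivariant index computation on $-\CP^{2}$ should recover the formula and explain the appearance of the square $(1-n/2)^{2}$.
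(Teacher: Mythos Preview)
Your approach is the paper's: invoke the general dimension formula from \cite{KM-jsharp} and substitute the invariants of $\Psi_{n}$. The paper quotes that formula for foams in $S^{4}$ as
\[
8\kappa + \chi(\Psi_{n}) + \tfrac{1}{2}(\Psi_{n}\cdot\Psi_{n}) - \tau(\Psi_{n}) - 3,
\]
and then simply plugs in $\chi(\Psi_{n})=1+n$, the face-by-face self-intersection $\Psi_{n}\cdot\Psi_{n}=2-n/2$ (a contribution of $+2$ from $R$ and $-1/2$ from each disk $D_{i}$), and the tetrahedral count $\tau(\Psi_{n})=\binom{n}{2}$.

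Two pieces of your topological data are wrong and would derail the substitution. First, $\Psi_{1}$ \emph{does} have a seam: the circle $\partial D_{1}\subset R$ is a locus where three local sheets meet, so it is not true that $\Psi_{0}$ and $\Psi_{1}$ are both seam-free. Second, the face self-intersection of each $D_{i}$, computed with the seam-induced framing of its boundary as specified in \cite{KM-jsharp}, is $-1/2$, not zero. There is also no separate ``seam contribution'' in the formula; only $\chi$, the self-intersection, and $\tau$ enter.

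More seriously, your plan to ``pin down'' the face contributions by matching the cases $n=0,1$ to the stated answers is circular: it uses the conclusion of the lemma to recover the coefficients of the formula rather than using the known formula to prove the lemma. You should instead quote the exact formula and compute each invariant directly, as the paper does. Your concern about dependence on the combinatorial type of the line arrangement is not relevant here, since the lemma is only applied for $n\le 3$, where the configuration is unique; and the branched-cover check via $-\CP^{2}$, while a pleasant idea, is not needed for the proof.
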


\begin{proof}
    The dimension formula in general is given in
    \cite[Proposition~\ref{prop:dimension}]{KM-jsharp}, and for foams
    in $S^{4}$ it reads
    \[
            8 \kappa + \chi(\Psi_{n}) + \frac{1}{2}( \Psi_{n} \cdot
            \Psi_{n} ) - \tau(\Psi_{n}) - 3,
     \]
    where the self-intersection number $\Psi_{n} \cdot \Psi_{n}$ is
    computed face-by-face using the framing of the double-cover of the
    boundary obtained from the seam \cite{KM-jsharp}. For $\Psi_{n}$,
    there is a contribution of $+2$ from $R\cdot R$ and $-1/2$ from
    each disk $D_{i}$, so $\Psi_{n} \cdot \Psi_{n} = 2 - n/2$. The
    term $\tau$ is the number of tetrahedral points, which is
    $n(n-1)/2$, and the Euler number $\chi$ is $1+n$. The formula in
    the lemma follows.
\end{proof}

For each of the cases in the previous lemma, we can now consider
the non-empty moduli spaces of smallest possible action. For
appropriate choices of metrics, we can identify these completely.

\begin{lemma}
    On the bifolds corresponding to $(S^{4}, \Psi_{n})$, the smallest-action
    non-empty moduli spaces of anti-self-dual bifold connections  are
    as follows, for $n\le 2$.
    \begin{enumerate}
    \item For $n=0$ or $n=2$, there is a unique solution with $\kappa=0$: 
        a flat connection whose holonomy group has order $2$ for $n=0$
        and is the Klein $4$-group, $V_{4}$, for $n=2$. The
        automorphism group of the connection is $O(2)$ (respectively, $V_{4}$),
         and it is an
        unobstructed solution in a moduli space of formal dimension
        $-1$ (respectively, dimension $0$).
    \item For $n=1$ and $n=3$,
        the smallest non-empty moduli spaces have
        $\kappa=1/32$ and formal dimension $0$. In both cases,  with
        suitable choices of bifold metrics, 
        the moduli space consists of a unique unobstructed
        solution with holonomy group $O(2)$.
    \end{enumerate}
\end{lemma}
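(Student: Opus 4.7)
I would split the argument according to the parity of $n$, handling the even cases $n = 0, 2$ (where flat bifold connections exist) separately from the odd cases $n = 1, 3$ (where the minimum non-empty moduli has $\kappa = 1/32$).

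For $n = 0$, the bifold $(S^4, R)$ is literally the global quotient of $-\CP^2$ by complex conjugation. Since $-\CP^2$ is simply connected, the orbifold fundamental group of $(S^4, R)$ is $\Z/2$, generated by a meridian of $R$. The bifold constraint forces this meridian to map to a nontrivial involution in $\SO(3)$, and any two such representations are $\SO(3)$-conjugate, giving a unique flat bifold connection. Its automorphism group is the centralizer of an involution in $\SO(3)$, namely $O(2)$, and by flatness $\kappa = 0$, with formal dimension $-1$ by the preceding lemma. For $n = 2$, I would compute the orbifold $\pi_1$ via van Kampen, decomposing along a small ball around the single tetrahedral point (whose local contribution is the $V_4$ relation). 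Any bifold-compatible representation into $\SO(3)$ must send the three face meridians to three distinct nontrivial commuting involutions, hence factors through the standard embedding $V_4 \subset \SO(3)$; this representation is unique up to conjugation, and its centralizer in $\SO(3)$ is $V_4$ itself. Unobstructedness in both cases follows from a standard cohomology computation: the deformation complex of a flat bifold connection reduces to orbifold cohomology with local coefficients, controlled in turn by the cohomology of $-\CP^2$.

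For $n = 1, 3$, the first task is to rule out flat connections. Here I would exploit the non-separating $\RP^1$-topology of each seam $\partial D_i$ inside $R = \RP^2$, together with the local $V_4$ relations: the two sides of $R$ at a seam point are joined by an orientation-reversing loop in $R$, so their meridians are conjugate but need not commute in $\pi_1^{\mathrm{orb}}$, and a case analysis shows the bifold constraint (nontrivial fiber action of each face meridian) cannot be satisfied simultaneously with the $V_4$ relations at all seams and tetrahedral points. From the dimension formula $8\kappa - 1/4$, the smallest $\kappa$ with non-negative formal dimension is then $\kappa = 1/32$, corresponding to formal dimension zero. To construct the solution, I would use the canonical $\Z/2$-fixed axis inside each $\SO(3)$ fiber over a face to reduce the structure group to $O(2)$; the resulting line subbundle extends across the singular strata precisely in the odd cases (the parity enters here), giving a global $O(2)$-reduction of the bifold bundle and reducing the ASD equation to an abelian instanton equation with prescribed singularities along the faces. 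For a bifold metric invariant under the real structure of $-\CP^2$ (and the permutation symmetry when $n = 3$), this abelian equation has a unique harmonic solution with the correct Euler number, producing the claimed reducible with holonomy $O(2)$.

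The main obstacle will be the $\kappa = 1/32$ step. One must both show that no $\kappa$ strictly between $0$ and $1/32$ is realized (an integrality argument applied to the bifold Pontryagin number, carried out on a suitable finite cover) and verify that the constructed reducible is unobstructed, which amounts to showing the linearized ASD operator at the solution has vanishing cokernel. The latter requires a careful choice of bifold metric so that no accidental zero modes of the deformation complex arise, typically via a Weitzenb\"ock argument exploiting positivity of a curvature term tailored to the orbifold geometry.
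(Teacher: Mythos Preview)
Your treatment of $n=0,2$ is essentially correct and close to the paper's. The paper computes $\pi_1$ slightly differently, by observing that $S^4 \setminus R$ deformation-retracts onto a dual copy $R'$ of $\RP^2$, so that $S^4 \setminus \Psi_n$ has the homotopy type of $R'$ with $n$ punctures (giving $\Z/2$ for $n=0$ and $\Z * \Z$ for $n=2$). Unobstructedness is then obtained by computing $H^0_A$ and $H^1_A$ directly and reading off $H^2_A = 0$ from the dimension formula, rather than by invoking the cohomology of $-\CP^2$.

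For $n=1,3$ your approach diverges from the paper's and is incomplete at the decisive points. The separate topological case analysis to rule out flat connections is unnecessary: since the formal dimension $8\kappa - 1/4$ is an index and hence an integer, $\kappa = 0$ is excluded immediately, and the same integrality forces $\kappa \ge 1/32$ with no appeal to a finite cover. Your construction of the $O(2)$ reducible, via a global line subbundle and an abelian instanton equation, is a plausible direction, but as written it does not establish \emph{uniqueness} of the solution (which is the real content here), and the claim that the line subbundle ``extends across the singular strata precisely in the odd cases'' is not correct as a statement about bifold bundles. Your proposed Weitzenb\"ock argument for unobstructedness is left entirely unspecified.

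The paper supplies both missing ingredients by passing to covers equipped with specific metrics. For $n=1$, the bifold is the $\Z/2$ quotient (by complex conjugation) of $-\CP^2$ with cone angle $\pi$ along a real line $L$; one uses the conformally anti-self-dual metric of Abreu and Bryant, which has positive scalar curvature, so that $H^2_A = 0$ for \emph{every} anti-self-dual connection by the Atiyah--Hitchin--Singer vanishing theorem. Any solution of action $1/32$ then pulls back to an unobstructed solution of action $1/16$ on the cover, where the formal dimension is $-1$; unobstructedness forces this lift to be reducible, and there is a unique $SO(2)$ connection with that action. For $n=3$, the bifold has a \emph{smooth} $8$-fold cover $-\CP^2$ (the quotient by complex conjugation together with the Klein $4$-group acting by projective linear maps); using the Fubini--Study metric, a solution of action $1/32$ pulls back to the unique $SO(2)$ instanton of action $1/4$ in the bundle with $p_1 = -1$. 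In both cases uniqueness on the cover yields uniqueness on the quotient, and the $O(2)$ holonomy is read off from the descent. These two moves---the conformally anti-self-dual metric and the identification on the cover---are the heart of the argument, and your sketch does not reach them.
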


\begin{proof}
    The complement $S^{4}\sminus R$ deformation-retracts onto another copy of
    $\RP^{2}$ in $S^{4}$, which we call $R'$ (the image of $R$ under
    the antipodal map, in a standard construction of $R$). The
    interiors of the disks
    $D_{i}$ are the fibers of this retraction. So $S^{4}\sminus
    \Psi_{n}$ has the homotopy type of $R'$  with $n$ punctures. In
    particular, the fundamental group is $\Z/2$ for $n=0$ and $\Z *
    \Z$ for $n=2$. For $i=0$ and $2$, the smallest possible action is
    clearly $\kappa=0$, and we are therefore looking at
    representations of $\Z/2$ or $\Z * \Z$ in $\SO(3)$ sending the
    standard generators to involutions. In the second case, the two
    involutions must be distinct and commuting because of the presence
    of the tetrahedral point where the disks meet. So the flat
    connections are as described in the lemma. For these bifold
    connections $A$, we can read off $H^{0}_{A}$ and $H^{1}_{A}$ in
    the deformation complex by
    elementary means, and conclude that $H^{2}_{A}=0$ from the
    dimension formula.

    For the case $n=1$, the corresponding bifold admits a
    double-cover, branched along $R$, for which the total space is
    $-\CP^{2}$ containing a complex line $L$ as orbifold locus with
    cone-angle $\pi$. According to
    \cite{Abreu, Bryant},
    there exists a
    conformally anti-self-dual bifold
    metric on $-\CP^{2}$ with cone-angle $\pi$ along $L$ and positive
    scalar curvature. This metric
    is invariant under the action of complex conjugation, and it gives
    rise to a conformally anti-self-dual metric on the bifold $\check
    S^{4}$ corresponding to $(S^{4}, \Psi_{1})$. For such a metric,
    the obstruction space $H^{2}_{A}$ in the deformation complex of an
    anti-self-dual bifold connection $A$ is trivial \cite[Theorem
    6.1]{AHS}, so the moduli space is zero-dimensional and consists of
    finitely many points, all of which are unobstructed
    solutions. If $A$ is such a bifold connection, consider its
    pull-back, $\tilde A$, on the double-cover $-\CP^{2}$, regarded as a bifold with
    singular locus $L$. The action of $\tilde A$ is $1/16$, and the
    dimension formula shows that the moduli space containing $\tilde
    A$ on this bifold has formal dimension $-1$. Since it is
    unobstructed, the solution must be reducible, and must therefore
    be an $\SO(2)$ connection,  with holonomy $-1$ around the link of $L\subset
    -\CP^{2}$. There is a unique such $\SO(2)$ solution on $-\CP^{2}$
    with the correct action, and it gives rise to a unique $O(2)$
    connection on the original bifold.

    In the case $n=3$, the bifold corresponding to $\Psi_{3}\subset
    S^{4}$ has a smooth $8$-fold cover, which is $-\CP^{2}$. The
    covering map is the quotient map for the action of the
    elementary abelian group of order $8$ acting on $-\CP^{2}$,
    generated by the action of complex conjugation and the action of
    the Klein $4$-group by projective linear transformations. We equip
    the quotient bifold with the quotient metric of the Fubini-Study
    metric, so that (as in the case $n=1$) all solutions are
    unobstructed.  A solution of action $1/32$ pulls back to a
    solution of action $1/4$ on the $8$-fold cover, which must be the unique
    instanton with holonomy group $\SO(2)$ in the $\SO(3)$ bundle with
    Pontryagin number $-1$ on $-\CP^{2}$. This descends to a unique bifold
    connection on the quotient.
\end{proof}

We use these results about small-action moduli spaces to analyze
connected sums in some particular cases. In general, given foams $\Phi
\subset X$ and $\Phi'\subset X'$ with tetrahedral points $t$, $t'$ in
each, there is a connected sum
\begin{equation}\label{eq:tet-sum}
                 (X,\Phi) \#_{t,t'} (X',\Phi')
\end{equation}
performed by removing standard neighborhoods and gluing together the
resulting foams-with-boundary. Similarly, if $s$ and $s'$ are points
on seams of $\Phi$ and $\Phi'$, there is a connected sum
\[
                (X,\Phi) \#_{s,s'} (X',\Phi'),
\]
and there is a connected sum for points $f$ and $f'$ in the interiors
of faces of the two foams:
\[
                (X,\Phi) \#_{f,f'} (X',\Phi'),
\]

Although our notation does not reflect this, the connected sum
is \emph{not unique} when we summing at a tetrahedral point or a
seam. The cause of the non-uniqueness (in the case of the tetrahedral
points, for example) is that we have to choose how to identify the
$1$-skeleta of the two tetrahedra that arise as the links of $t$ and
$t'$.

 We consider a connected sum at a
tetrahedral point in the case that $(X',\Phi')$ is either $(S^{4},
\Psi_{2})$ or $(S^{4}, \Psi_{3})$.

\begin{proposition}
 \label{prop:sum-with-Psi-2-3}
    Let $(X,\Sigma)$ be a foam cobordism with strong marking data $\nu$,
    defining a linear map $J(X,\Sigma,\nu)$. Let $t$ be a tetrahedral
    point in $\Sigma$.
    \begin{enumerate}
    \item If a new foam $\tilde\Sigma$ is constructed from $\Sigma$ as
        a connected sum \[(X,\Sigma) \#_{t, t_{2}} (S^{4},\Psi_{2}), \]
        where $t_{2}$ is the unique tetrahedral point in $\Psi_{2}$,
        then the new linear map  $J(X,\tilde \Sigma,\nu)$ is equal to
        the old one.
    \item If a new foam $\tilde\Sigma$ is constructed from $\Sigma$ as
        a connected sum \[ (X,\Sigma) \#_{t, t_{3}} (S^{4},\Psi_{3}),\]
        where $t_{3}$ is any of the three tetrahedral points in $\Psi_{3}$,
        then the new linear map  $J(X,\tilde \Sigma,\nu)$ is zero.   
    \end{enumerate}
\end{proposition}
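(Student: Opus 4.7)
The plan is to prove both statements by a neck-stretching argument at the tetrahedral connected sum region, combined with the explicit moduli-space descriptions from the preceding lemma. The link of a tetrahedral point is a bifold $S^{3}$ whose singular set is the one-skeleton of a tetrahedron; the complement of this graph has fundamental group surjecting onto $V_{4}$, and the bifold carries a unique flat connection up to gauge, namely the standard $V_{4}$-representation, with automorphism group $V_{4}\subset\SO(3)$. Under neck stretching, any anti-self-dual bifold connection on $(X,\tilde\Sigma)$ decomposes into anti-self-dual connections on $(X,\Sigma)$ and on $(S^{4},\Psi_{n})$, both converging on the long neck to this standard flat connection. A gluing theorem then expresses the zero-dimensional moduli spaces on $(X,\tilde\Sigma)$ in terms of those on the two factors, weighted by a finite set of gauge-inequivalent gluings.

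For part~(1), the only moduli space on $(S^{4},\Psi_{2})$ that can contribute a zero-dimensional piece is the unique unobstructed flat $V_{4}$ connection at $\kappa=0$, which has automorphism group $V_{4}$; higher-action moduli spaces have strictly positive formal dimension $8\kappa$ and so produce no zero-dimensional glued pieces. Since the strong marking $\nu$ makes every bifold connection on $(X,\Sigma)$ have trivial automorphism group, the set of gauge-inequivalent gluings for each such connection is $V_{4}/V_{4}$, a single point. The gluing theorem therefore induces a bijection between the zero-dimensional moduli spaces that define $J(X,\Sigma,\nu)$ and those that define $J(X,\tilde\Sigma,\nu)$, giving the desired equality of linear maps.

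For part~(2), the analogous zero-dimensional moduli space on $(S^{4},\Psi_{3})$ is a single unobstructed $O(2)$-reducible at $\kappa=1/32$. The centralizer of $O(2)$ in $\SO(3)$ is the order-two subgroup $V_{2}=V_{4}\cap O(2)$, so the image of the stabilizer of the $\Psi_{3}$ solution in $V_{4}$ equals $V_{2}$. For each bifold connection on $(X,\Sigma)$, the set of gauge-inequivalent gluings is therefore $V_{4}/V_{2}$, a set of two elements; each such connection thus contributes two distinct glued solutions to the zero-dimensional moduli space on $(X,\tilde\Sigma)$, and the mod-$2$ count vanishes. As before, higher-action moduli spaces on $(S^{4},\Psi_{3})$ have positive formal dimension and cannot supply zero-dimensional glued pieces, so $J(X,\tilde\Sigma,\nu)=0$.

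The principal obstacle will be making the gluing decomposition across the tetrahedral bifold link rigorous, particularly in part~(2), where the reducibility of the $\Psi_{3}$ solution requires careful treatment of the Kuranishi model at the reducible locus in order to confirm that the naive $V_{4}/V_{2}$ enumeration genuinely governs the glued moduli space over $\F$. Controlling the compactifications of the neck-stretched moduli spaces, so as to rule out bubbling or unexpected neck-breaking phenomena that might introduce hidden contributions, is also an essential part of the argument.
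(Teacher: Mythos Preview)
Your proposal is correct and follows essentially the same approach as the paper: both argue by gluing theory at the tetrahedral connected-sum point, with gluing parameter the Klein four-group $V$, and identify the glued zero-dimensional moduli space over $(X,\tilde\Sigma)$ as a covering of the zero-dimensional moduli space over $(X,\Sigma)$ with fiber $V/\Gamma_{A'}$, where $\Gamma_{A'}$ is the automorphism group of the minimal-action solution on $(S^{4},\Psi_{n})$. One small notational slip: your description of the centralizer of $O(2)$ as $V_{4}\cap O(2)$ is not quite right, since in fact $V_{4}\subset O(2)$; what you want is simply that the centralizer of $O(2)$ in $SO(3)$ is the order-two center of $O(2)$, which sits inside $V_{4}$ as an index-two subgroup, and your subsequent count $|V_{4}/V_{2}|=2$ is correct.
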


\begin{proof}
    Consider a general connected sum at tetrahedral points, as in
    equation~\eqref{eq:tet-sum}. Let $A$ and $A'$ be unobstructed
    solutions on $(X,\Phi)$ and $(X',\Phi')$. Let $U_{A}$ and $U_{A'}$
    be neighborhoods of $[A]$ and $[A']$ in their respective moduli
    spaces. The limiting holonomy of
    the connections at the tetrahedral point is the Klein $4$-group $V$,
    whose commutant in  $\SO(3)$ is also $V$. So we have moduli spaces
    of solutions with framing at $t$ and $t'$ in which $[A]$ and
    $[A']$ have neighborhoods $\tilde U_{A}$, $\tilde U_{A'}$, such
    that $U_{A} = \tilde U_{A}/ V$ and $U_{A'} = \tilde
    U_{A'}/V$. Gluing theory provides a model for the moduli space on
    the connected sum with a long neck, of the form 
       \[ 
            \tilde U_{A} \times_{V} \tilde U_{A'}.
        \]
    If the action of $V$ on $\tilde U_{A}$ is free and $U_{A'}$
    consists of the single point $[A']$, then this local model is a
    finite-sheeted covering of $U_{A}$ with fiber $V/\Gamma_{A'}$,
    where $\Gamma_{A'}\subset V$ is the automorphism group of the
    solution $A'$. 

    In particular if $A'$ is the smallest energy solution on
    $(S^{4},\Phi_{2})$, then the fiber is a single point, while for
    $(S^{4}, \Phi_{3})$ the fiber is $2$ points. For the case of
    compact, zero-dimensional moduli spaces on the connected sum,
    these local models become global descriptions when the neck is
    long, and we conclude that the moduli space whose point-count
    defines the map $J(X,\Sigma, \nu)$ is unchanged in the first case
    and becomes double-covered in the second case. In the second case,
    the new map is zero because we are working with characteristic $2$.
\end{proof}

The next proposition considers similarly the results of connected sum
at seam points, where one of the summands is $\Psi_{1}$, $\Psi_{2}$ or
$\Psi_{3}$.

\begin{proposition}
 \label{prop:seam-sums}
    Let $(X,\Sigma)$ be a foam cobordism with strong marking data
    $\nu$, as in the previous proposition. Let $s$ be a 
    point in a seam of $\Sigma$. For $n=1,2,3$, let $s_{n}$ be a point
    on a seam of $\Psi_{n}$. 
   If a new foam $\tilde\Sigma_{n}$ is constructed from $\Sigma$ as
        the connected sum \[(X,\Sigma) \#_{s, s_{n}} (S^{4},\Psi_{n}), \]
        then the new linear map  $J(X,\tilde \Sigma_{n},\nu)$ is equal to
        the old one in the case $n=2$, and is zero in the case that
        $n=1$ or $n=3$.
\end{proposition}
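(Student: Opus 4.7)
The argument parallels the proof of Proposition~\ref{prop:sum-with-Psi-2-3}, with seam points in place of tetrahedral points. At a seam point the local stabilizer is $H_{3}=V$, so the limiting holonomy of a bifold connection lies in $V\subset\SO(3)$, whose commutant in $\SO(3)$ is again $V$. Introducing framings at $s$ and $s_{n}$ therefore gives framed moduli spaces $\tilde U_{A}$ and $\tilde U_{A'}$ with $U_{A}=\tilde U_{A}/V$ and $U_{A'}=\tilde U_{A'}/V$, and gluing theory for long-neck seam connected sums produces the local model $\tilde U_{A}\times_{V}\tilde U_{A'}$ for the moduli space on $(X,\tilde\Sigma_{n})$. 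As in the earlier proof, only $0$-dimensional components contribute to the point count that defines $J(X,\tilde\Sigma_{n};\nu)$, and the previous lemma shows that for $n=1,2,3$ the contribution comes entirely from the unique, unobstructed, $0$-dimensional smallest-action solution $A'$; larger-action solutions on $(S^{4},\Psi_{n})$ contribute only higher-dimensional components.

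The problem reduces to computing the cardinality of the fiber $V/\Gamma_{A'}$, where $\Gamma_{A'}\subset V$ is the automorphism group of $A'$ realized inside the framing group via its action on the $\SO(3)$-fiber at $s_{n}$. For $n=2$, the solution $A'$ has holonomy group $V$ and automorphism group equal to $V$, so $\Gamma_{A'}=V$, the fiber is a single point, and $J(X,\tilde\Sigma_{2};\nu)$ agrees with $J(X,\Sigma;\nu)$. For $n=1$ and $n=3$, the holonomy of $A'$ is $O(2)$, whose commutant in $\SO(3)$ is the order-two center of $O(2)$, namely the rotation by $\pi$ about the $O(2)$-axis. Taking $V$ to be the Klein $4$-group of $\pi$-rotations about three orthogonal axes one of which is the $O(2)$-axis, one has $V\subset O(2)$, and this center realizes as an index-$2$ subgroup of $V$. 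The fiber therefore has cardinality two, the glued moduli space double-covers the original, and $J(X,\tilde\Sigma_{n};\nu)=0$ in characteristic $2$.

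The main point requiring care is the identification of $\Gamma_{A'}$ as an index-$2$ subgroup of $V$ in the two $O(2)$ cases: one must verify that the axis of the $O(2)$-holonomy is compatible with the framing group $V$ at $s_{n}$, rather than the center of $O(2)$ landing trivially in $V$. This is forced by the observation that any holonomy around a small loop linking the seam must already lie in the local stabilizer $V$, so $V$ is contained in the global holonomy $O(2)$, and the center of $O(2)$ is then automatically a subgroup of $V$ of index two, as claimed.
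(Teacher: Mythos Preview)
Your proof is correct and takes essentially the same approach as the paper. The paper's own proof is the single sentence ``The proofs are standard, modeled on the proofs in the previous proposition,'' and you have carried out exactly that modeling, correctly identifying $\Gamma_{A'}\subset V$ as all of $V$ for $n=2$ and as an index-two subgroup for $n=1,3$.
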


\begin{proof}
    The proofs are standard, modeled on the proofs in the previous proposition.
\end{proof}

Next we have a proposition about connected sums at points
interior to faces of the foams.

\begin{proposition}
 \label{prop:face-sums}
    Let $(X,\Sigma)$ be a foam cobordism with strong marking data
    $\nu$, as in the previous propositions. Let $f$ be a 
    point in the interior of a face of $\Sigma$. Let $f_{n}$ be a point
    in a face of $\Psi_{n}$. 
   If a new foam $\tilde\Sigma_{n}$ is constructed from $\Sigma$ as
        the connected sum \[(X,\Sigma) \#_{f, f_{n}} (S^{4},\Psi_{n}), \]
        then the new linear map  $J(X,\tilde \Sigma_{n},\nu)$ is equal to
        the old one in the case $n=0$, and is zero in all other cases.
\end{proposition}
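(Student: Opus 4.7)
The plan is to follow the template of Propositions~\ref{prop:sum-with-Psi-2-3} and~\ref{prop:seam-sums}, adapting the gluing analysis to a face point. At an interior point $f$ of a face of $\Sigma$, the bifold stabilizer is cyclic of order two, generated by an involution $\sigma\in\SO(3)$ whose centralizer is $O(2)\subset\SO(3)$. For unobstructed solutions $A$ on $(X,\Sigma)$ and $A'$ on $(S^{4},\Psi_{n})$, the gluing theorem produces, for a sufficiently long neck, the local model
\[
\tilde U_{A}\times_{O(2)}\tilde U_{A'}
\]
for the moduli space on the connected sum, where the tilded objects are the framed moduli spaces at $f$ and $f_{n}$.

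For $n=0$, I would argue in parallel with the $\Psi_{2}$-case of Proposition~\ref{prop:sum-with-Psi-2-3}. The preceding lemma tells us that $(S^{4},\Psi_{0})$ carries a unique flat solution whose automorphism group $\Gamma_{A'}=O(2)$ coincides with the centralizer of the face-point holonomy. Consequently $\tilde U_{A'}$ is a single point, the fibre product collapses to $\tilde U_{A}/O(2)=U_{A}$, and the induced linear map is unchanged.

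For $n=1,2,3$, the strategy is a dimension count. In each case the centralizer $O(2)$ of the face-point holonomy does not coincide with the automorphism group $\Gamma_{A'}$ of the smallest-action solution (which is $V_{4}$ for $n=2$, or an $O(2)$ along an axis transverse to $\sigma$ for $n=1,3$), so the intersection $\Gamma_{A'}\cap O(2)$ is finite and the framed moduli $\tilde U_{A'}$ is one-dimensional. A short computation using $\chi(\Psi_{n})=1+n$, $\Psi_{n}\cdot\Psi_{n}=2-n/2$, $\tau(\Psi_{n})=n(n-1)/2$, together with the smallest actions $\kappa_{n}$ from the preceding lemma, then shows that the formal dimension of the moduli space on the connected-sum foam $\tilde\Sigma_{n}$ exceeds that of the moduli on $(X,\Sigma)$ by exactly $+1$. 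Hence the zero-dimensional moduli that define $J(X,\tilde\Sigma_{n},\nu)$ would have to correspond, under the gluing, to formal-dimension $-1$ moduli on $(X,\Sigma)$; there being none, the map vanishes.

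The main obstacle will be the familiar gluing-theoretic verification: confirming that the fibre-product model exhausts the zero-dimensional moduli on the connected sum once the neck is sufficiently long, and correctly handling the contributions from any reducible solutions on $(X,\Sigma)$. These are routine issues, parallel to those already settled in the proofs of Propositions~\ref{prop:sum-with-Psi-2-3} and~\ref{prop:seam-sums}, so no essentially new ingredient should be required.
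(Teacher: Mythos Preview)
Your approach is correct and is precisely the ``straightforward'' argument the paper has in mind: the $n=0$ case goes through because $\Gamma_{A'}=O(2)$ matches the face-point centralizer, while for $n\ge 1$ the gluing fibre is one-dimensional, forcing the relevant moduli on $(X,\Sigma)$ to have negative formal dimension. One small slip: for $n=1,3$ the automorphism group $\Gamma_{A'}$ is the centralizer of the holonomy $O(2)$ in $\SO(3)$, namely $\mathbb{Z}/2$, not a transverse copy of $O(2)$ --- you have interchanged holonomy group and automorphism group --- but this is harmless, since in either case the image in the face-point centralizer is finite and $\tilde U_{A'}$ is one-dimensional as you claim.
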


\begin{proof}
    Again, this is now straightforward.
\end{proof}

We consider next a different type of connect sum. Let
$\Psi_{2}^{-}$ be the mirror image of $\Psi_{2}$. It has
self-intersection number $-1$, Euler number $2$, and one tetrahedral
point. In the description of $\Psi_{2}$ in \eqref{eq:Psi-foams}, the
surface $R$ is divided into two components by the seams of the foam. Let
$f_{2}\in R$ be a point in one of those two components.

\begin{proposition}
 \label{prop:sum-with-Psi-2-minus}
    Let $(X,\Sigma)$ be a foam cobordism with strong marking data
    $\nu$. Let $f$ be a 
    point in the interior of a face of $\Sigma$. Let $f_{2} \in
    \Psi_{2}^{-}$ be as above. 
   If a new foam $\tilde\Sigma$ is constructed from $\Sigma$ as
        the connected sum \[(X,\Sigma) \#_{f, f_{2}} (S^{4},\Psi^{-}_{2}), \]
        then the new linear map  $J(X,\tilde \Sigma,\nu)$ is equal to
        the old one.
\end{proposition}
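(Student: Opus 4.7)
The plan is to prove Proposition~\ref{prop:sum-with-Psi-2-minus} along the lines of the three preceding propositions: identify the relevant small-action ASD moduli space on $(S^{4},\Psi_{2}^{-})$, verify that it consists of a single unobstructed solution for a suitable bifold metric, and apply the face-gluing theorem to conclude that the contribution to $J(X,\tilde\Sigma,\nu)$ is the identity. The argument should parallel in structure the $n=0$ case of Proposition~\ref{prop:face-sums}, in which the identity arose from a reducible $O(2)$-solution in a formal dimension $-1$ moduli space.

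First I would analyze the smallest-action ASD bifold connections on $(S^{4},\Psi_{2}^{-})$. The complement $S^{4}\sminus\Psi_{2}^{-}$ is diffeomorphic to $S^{4}\sminus\Psi_{2}$, so the flat bifold connections are classified the same way: there is a unique flat connection with Klein $4$-group holonomy and automorphism group $V_{4}$. The dimension formula, however, changes in the self-intersection term (since $\Psi_{2}^{-}\cdot\Psi_{2}^{-}=-1$), giving formal dimension $-3/2$ at $\kappa=0$ rather than $0$. This flat solution is therefore obstructed in the ASD complex, and I would instead look at the smallest $\kappa>0$ for which the dimension formula yields $-1$, matching the $\Psi_{0}$ regime where an $O(2)$-reducible contributed the identity.

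To construct the required unobstructed solution, I would pass to the double cover of $S^{4}$ branched along $R$, which is $\CP^{2}$ (with orientation opposite to the $\Psi_{2}$ case), with $\Psi_{2}^{-}$ lifting to two real projective lines meeting at a single point as cone-angle-$\pi$ orbifold locus. Following the strategy used in Lemma~3.2 for $\Psi_{1}$, one invokes a conformally ASD bifold metric on this cover, adapted from the Abreu--Bryant construction to the two-line configuration, to ensure that $H^{2}_{A}=0$ for bifold connections in the relevant small-action moduli space. The smallest-action solution on the cover is then a reducible $\SO(2)$-instanton of appropriate Pontryagin number, which descends to the required unobstructed $O(2)$-solution on $(S^{4},\Psi_{2}^{-})$.

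The remainder of the argument is the gluing analysis for a face connected sum, which is identical in structure to the $n=0$ case of Proposition~\ref{prop:face-sums}. The commutant of the order-$2$ monodromy at a face point is $O(2)$, and with the small-action moduli identified as a single reducible $O(2)$-solution, the local gluing model produces a single gluing parameter over each point of the moduli on $(X,\Sigma)$; consequently the count of solutions, and hence the linear map $J(X,\tilde\Sigma,\nu)$, is unchanged. The main obstacle is the first step: constructing the appropriate conformally ASD bifold metric with two cone-angle-$\pi$ lines meeting at a point, and verifying the vanishing of $H^{2}_{A}$ for the smallest-action ASD solution; once this is in place, the gluing and the conclusion parallel the earlier propositions of this section.
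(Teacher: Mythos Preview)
Your approach has a genuine gap: you are trying to set up an \emph{unobstructed} gluing problem by passing to a higher-action solution on $(S^{4},\Psi_{2}^{-})$, but the gluing that actually governs the map $J(X,\tilde\Sigma,\nu)$ is an \emph{obstructed} one, with the flat $V$-connection at $\kappa=0$ on the $\Psi_{2}^{-}$ side. When you stretch the neck on a zero-dimensional moduli space for $(X,\tilde\Sigma)$, the action on the $(S^{4},\Psi_{2}^{-})$ summand must be zero (the $(X,\Sigma)$ side already has the full action needed for its own zero-dimensional moduli space). So the only relevant solution is the flat $V$-connection $A_{V}$, and this has a one-dimensional obstruction space $H^{2}(A_{V})$. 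The gluing parameter at a face point is $O(2)/V\cong S^{1}$, and the moduli space on the connected sum is modeled on the zero locus of a real line bundle $h$ over this $S^{1}$-bundle; one gets $J(X,\tilde\Sigma,\nu)=\epsilon\, J(X,\Sigma,\nu)$ with $\epsilon=\langle w_{1}(h),[S^{1}]\rangle$. The paper's proof then identifies the summand of $E=L_{0}\oplus L_{1}\oplus L_{2}$ in which the obstruction class lives and checks that the monodromy at $f_{2}$ acts nontrivially on that summand precisely when $f_{2}$ lies on one of the two pieces of $R$ (monodromy $j$ or $k$), giving $\epsilon=1$. Your proposal never touches this computation, and in particular gives no mechanism that distinguishes $f_{2}\in R$ from $f_{2}\in D_{i}$; but the proposition is false for $f_{2}$ on a disk (there $\epsilon=0$), so any correct argument must see this dichotomy.

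There is also a concrete obstruction to your plan for producing an unobstructed solution. The branched double cover of $S^{4}$ over $R\subset\Psi_{2}^{-}$ is $\CP^{2}$ with the \emph{positive} orientation (since now $R\cdot R=-2$), and $\CP^{2}$ admits no conformally anti-self-dual metric; indeed $b^{+}=1$ is exactly what produces the one-dimensional $H^{2}(A_{V})$. So the Abreu--Bryant route you invoke cannot get off the ground here, and the obstructed analysis is unavoidable.
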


\begin{proof}
    Consider a $0$-dimensional moduli space on $(X,\tilde\Sigma)$. Let
    $\kappa$ be the action of solutions in this moduli space. If we
    stretch the neck at the connected sum, and if we obtain in the limit a
    solution on $(X,\Sigma)$ and a solution on $(S^{4},
    \Psi_{2}^{-})$, then the action of the solutions on these two
    summands must be (respectively) $\kappa$ and $0$. The
    moduli space on $(X,\Sigma)$ with action $\kappa$ is
    zero-dimensional. The moduli space on $(S^{4}, \Psi_{2}^{-})$ with
    action $0$ consists of a unique $V$-connection $A_{V}$, but the formal
    dimension of the corresponding moduli space is $-1$, because there
    is a $1$-dimensional obstruction space $H^{2}(A_{V})$ in the
    deformation complex. The gluing
    parameter is $O(2)/V = S^{1}$, so the description of the moduli
    space on $(X,\Sigma)$ is as the zero-set of a real line bundle $h$
    over an $S^{1}$ bundle over the moduli space associated to
    $(X,\Sigma)$. 
    From this description, we see that
    \[
                 J(X,\tilde\Sigma,\nu) = \epsilon J(X,\Sigma, \nu)
    \]
    where $\epsilon\in \{0,1\}$ is the evaluation of $w_{1}(h)$ on the
    $S^{1}$ fibers.

    To describe $w_{1}(h)$, let us write the $\SO(3)$ vector bundle
    $E$ for the connection $A_{V}$ as a sum of three line bundles,
    \[
             E = L_{0} \oplus L_{1} \oplus L_{2}.
    \]
   In this decomposition, let $i$, $j$, $k$ be the non-identity
   elements of $V$, chosen to have the form
\[
             i =\diag(1,-1,-1) ,\; \; j=\diag(-1,1,-1) , 
        \;  \; k=\diag(-1,-1,1).
\]
   Up to conjugacy, the monodromy of $A_{V}$ around the link of the
   disks $D_{1}, D_{2}\subset \Psi^{-}_{2}$ is $i$. The monodromies
   around the links of the two faces $R \setminus (D_{1} \cup D_{2})$
   are $j$ and $k$. The $1$-dimensional vector space $H^{2}(A_{V})$ is
   spanned a by an $E$-valued form $\omega$ whose values lie in the
   summand $L_{0}\subset E$ (as follows from the fact that the
   branched double cover of $S^{4}$ branched over $R$ has
   $b^{+}=1$). When we form connected sum at a point $f_{2}$ in a face
   of $\Psi_{2}^{-}$, the obstruction bundle $h$ will be non-trivial on
   $S^{1}$ if and
   only if the monodromy of the link at $f_{2}$ 
   acts non-trivially on the summand
   $L_{0}$ in which $\omega$ lies. Thus $h$ is non-trivial if the
   monodromy at $f_{2}$ is $j$ or $k$, but $h$ is trivial if the
   monodromy is $i$. Under the hypotheses of the proposition, $f_{2}$
   belongs to one of the faces where the monodromy if $j$ or $k$, so
   the result follows.
\end{proof}

There is a variant of Proposition~\ref{prop:sum-with-Psi-2-minus}
which we will apply in section~\ref{sec:deducing-other}. Consider
$S^{4}$ as the union of two standard balls $B^{4}_{+}\cup
B^{4}_{-}$. Let $M\subset S^{3}$ be a standard M\"obius band whose
boundary is an unknot $U$.  Let $D^{+}$ and $D^{-}$ be standard disks
in $B^{4}_{+}$ and $B^{4}_{-}$ whose boundaries are both $U$. These
two disks together with $M$ make a foam,
\[
     \Psi = D^{+} \cup D^{-} \cup M.
\]
If the half-twist in $M$ has the appropriate sign, the
self-intersection of the surface real projective plane $R = D^{+} \cup
M$ in $S^{4}$ will be $-2$, while $D^{-}\cup M$ has self-intersection
$+2$. We can realize $(S^{4},\Psi)$, if we wish, as the $V$-quotient
of $S^{2}\times S^{2}$, where one generator of $V$ interchanges the
two factors and another generator acts as a reflection on each $S^{2}$
(so that the fixed set of the second generator is $S^{1}\times S^{2}$).
Let $f_{\pm}$ be interior points in the faces $D^{\pm}$ of $\Psi$.

\begin{proposition}
 \label{prop:sum-with-double-Mobius}
    Let $(X,\Sigma)$ be a foam cobordism with strong marking data
    $\nu$. Let $f$ be a 
    point in the interior of a face of $\Sigma$.  Let $\Psi$ be the
    foam just described, and let $g \in
    \Psi$ be an interior point of one of the faces of $\Psi$.
   Let a new foam $\tilde\Sigma$ be constructed from $\Sigma$ as
        the connected sum \[(X,\Sigma) \#_{f, g} (S^{4},\Psi). \]
        Then the new linear map  $J(X,\tilde \Sigma,\nu)$ is equal to
        the old one if $g$ belongs to $D^{+}$ or to $M$. If $g$
        belongs to $D^{-}$, then $J(X,\tilde \Sigma,\nu)$ is zero.
\end{proposition}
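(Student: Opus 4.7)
The plan is to mirror the proof of Proposition~\ref{prop:sum-with-Psi-2-minus} closely. Using the presentation of $(S^{4},\Psi)$ as the $V$-quotient of $S^{2}\times S^{2}$, the complement $S^{4}\sminus\Psi$ has fundamental group $V$, and there is a unique flat bifold connection $A_{V}$ of action $\kappa=0$ whose holonomy realizes the standard inclusion $V\subset\SO(3)$. Choosing a $V$-invariant product metric of positive scalar curvature on $S^{2}\times S^{2}$ and descending to the bifold makes all small-action solutions unobstructed on the cover, and the dimension formula then shows $A_{V}$ is the smallest-action non-empty solution, with formal dimension $-1$ and a one-dimensional obstruction space $H^{2}(A_{V})$. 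This obstruction is computed as the $V$-invariant part of $H^{+}(S^{2}\times S^{2})\otimes\R^{3}$: since $b^{+}(S^{2}\times S^{2})=1$ and $V$ acts on $H^{+}$ by some character $\chi\colon V\to\{\pm 1\}$, writing $E=L_{0}\oplus L_{1}\oplus L_{2}$ as a sum of three line subbundles corresponding to the three non-trivial characters of $V$, we find $H^{2}(A_{V})\cong L_{\chi}$.

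The standard gluing model for obstructed connected sums at face points, exactly as in the proof of Proposition~\ref{prop:sum-with-Psi-2-minus}, yields
\[
    J(X,\tilde\Sigma,\nu)=\epsilon\,J(X,\Sigma,\nu),
\]
where $\epsilon\in\{0,1\}$ is the evaluation of $w_{1}(h)$ on the $S^{1}$ of gluing parameters $O(2)/V$. The bundle $h$ is non-trivial if and only if the monodromy of $A_{V}$ around the meridian of the face of $\Psi$ containing the gluing point $g$ acts non-trivially on $L_{\chi}$, equivalently if and only if that monodromy lies outside $\ker\chi$. The three meridional monodromies around $D^{+}, D^{-}, M$ are three distinct non-identity elements of $V$ (they satisfy the order-two relation at the common seam $U$), so exactly one of them lies in $\ker\chi$.

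The main remaining task, which I expect to be the principal obstacle, is to identify $\chi$ and match it with the three faces. The generator $\omega_{1}+\omega_{2}$ of $H^{+}(S^{2}\times S^{2})$ is fixed by the swap $\sigma$ of the two $S^{2}$-factors and negated by the other involution $\rho$, so $\ker\chi=\{1,\sigma\}$. The fixed locus of $\sigma$ is the diagonal $S^{2}$, whose $V$-quotient is a disk in $S^{4}$; a check of the self-intersection data, using that $D^{+}\cup M$ has self-intersection $-2$ while $D^{-}\cup M$ has $+2$, identifies this disk as $D^{-}$. The twisted-diagonal $S^{2}$ (fixed set of $\sigma\rho$) and the torus $S^{1}\times S^{1}$ (fixed set of $\rho$) then descend to $D^{+}$ and $M$ respectively. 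Hence the monodromies of $D^{+}$ and $M$ both act non-trivially on $L_{\chi}$, making $h$ non-trivial and giving $J(X,\tilde\Sigma,\nu)=J(X,\Sigma,\nu)$, while the monodromy of $D^{-}$ fixes $L_{\chi}$, making $h$ trivial and giving $J(X,\tilde\Sigma,\nu)=0$, as required.
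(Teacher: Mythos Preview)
Your argument is correct and follows the same obstructed-gluing template as the paper's: a unique flat $V$-connection $A_{V}$ on $(S^{4},\Psi)$ with one-dimensional $H^{2}$, and the criterion that $\epsilon=1$ exactly when the meridional monodromy at $g$ acts non-trivially on that line. The only difference is in how the obstruction line inside $E$ is located: the paper simply repeats the $\Psi_{2}^{-}$ argument verbatim, observing that the branched double cover of $S^{4}$ over $R=D^{+}\cup M$ has $b^{+}=1$ (it is $\CP^{2}$, since $R\cdot R=-2$), which singles out the summand fixed by the $D^{-}$-monodromy, and it also offers the shortcut of recognizing $\Psi\cong\Psi_{2}^{-}\#_{t,t}\Psi_{2}$ and invoking Proposition~\ref{prop:sum-with-Psi-2-3}. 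You instead pass to the full $4$-fold $V$-cover $S^{2}\times S^{2}$, compute the $V$-character on $H^{+}$, and match fixed sets to faces by self-intersection; this is a slightly longer but equivalent calculation, and your identification of the diagonal with $D^{-}$ is confirmed by noting that $(S^{2}\times S^{2})/\sigma\cong\CP^{2}$ is precisely the paper's branched cover.

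One small correction: positive scalar curvature on $S^{2}\times S^{2}$ does not by itself make ASD solutions unobstructed (that requires $W^{+}=0$), and indeed the pullback of $A_{V}$ is obstructed on the cover. But you do not actually need this claim --- the dimension count already forces action $0$ on the $(S^{4},\Psi)$ summand, and $H^{2}(A_{V})=(H^{+}(S^{2}\times S^{2})\otimes\R^{3})^{V}$ is the purely cohomological computation you then carry out.
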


\begin{proof}
    This is an obstructed gluing problem of just the same sort as in
    the previous proposition. Once again, the branched doubee cover of
    $S^{4}$ over the surface $R=D^{+}\cup M$ has $b^{+}=1$, and the
    calculation proceeds as before. (One can alternatively derive this
    proposition from the previous one by showing that $\Psi$ is a
    connect sum $\Psi^{-}_{2} \#_{t,t}\Psi_{2}$ at the tetrahedral points.)
\end{proof}

\section{Topology of the composite cobordisms}

As stated in the introduction, the proof of the exact triangle
(Theorem~\ref{thm:exact-triangle}) is very little different from the
proof of a corresponding result for the $\SU(2)$ instanton knot
homology $\Isharp$. The first step is to understand the topology of
the composites of the cobordisms $\Sigma(L_{i+1}, L_{i})$. We shall
abbreviate the notation for these cobordisms to just
$\Sigma_{i+1,i}$.

 Figure~\ref{fig:mobiusshadedwithfeetfoamtwodisks}
shows (schematically) 
the composite of three consecutive foam cobordisms, from $L_{3}$
to $L_{0}$. The indices are interpreted cyclically, so $L_{3}$ and
$L_{0}$ are the same web in $Y$. To explain the picture, in the foam
$\Sigma_{1,0}$ pictured in Figure~\ref{fig:cobordism}, there are two
half disks whose removal would leave a standard saddle cobordism from
$K_{1}$ to $K_{0}$. When $\Sigma_{2,1}$ and $\Sigma_{1,0}$ are
concatenated, two half-disks are joined to form a single disk
$\Delta_{1} = \Delta^{-}_{1} \cup \Delta^{+}_{1}$. The triple composite
$\Sigma_{3,2}\cup\Sigma_{2,1}\cup\Sigma_{1,0}$ contains two such disks
$\Delta_{2}$ and $\Delta_{1}$, as well as two half-disks
$\Delta_{3}^{+}$ and $\Delta_{0}^{-}$. These disks are shown shaded in
the figure, and in a somewhat schematic manner (because the foams do
not embed in $\R^{3}$). If we remove the interiors of these disks from the foam,
what remains is a plumbing of M\"obius bands, which can be interpreted
as a composite cobordism from $K_{3}$ to $K_{0}$ and which appears
also as Figure~10 of \cite{KM-unknot}. We write $\Phi_{3,0}$ for this
composite cobordism from $L_{3}$ to $L_{0}$, and $\Phi'_{3,0}$ for the
complement of the interiors of the disks, as a cobordism from $K_{3}$
to $K_{0}$.

\begin{figure}[h]
    \begin{center}
        \includegraphics[scale=.36]{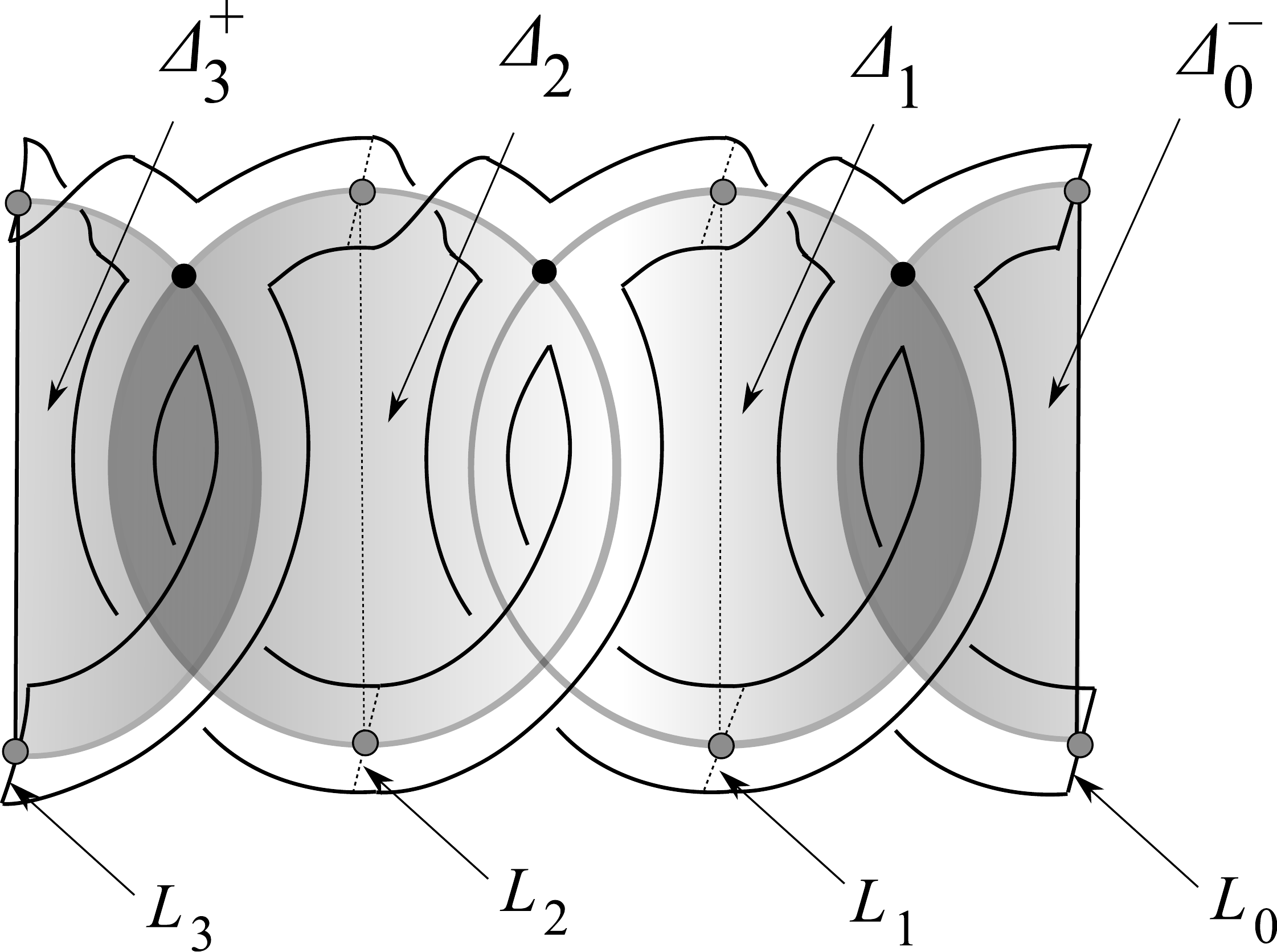}
    \end{center}
    \caption{\label{fig:mobiusshadedwithfeetfoamtwodisks}
    The composite cobordism $\Sigma_{3,2} \cup \Sigma_{2,1} \cup
    \Sigma_{1,0}$ (portrayed schematically because it is not embedded
    in $\R^{3}$). The black dots are tetrahedral points. The gray
    dots are vertices of the webs $L_{i}$.}
\end{figure}

As explained in \cite{KM-unknot}, a regular neighborhood $B_{1}$ of
$\Delta_{1}$ meets $\Phi'_{3,0}$ in a M\"obius band $M$. The same
regular neighborhood meets $\Phi_{3,0}$ in the union
\begin{equation}\label{eq:foam-union}
             M \cup \Delta_{2}^{+} \cup \Delta_{1} \cup \Delta_{0}^{-}.
\end{equation}
 This union is a foam in the $4$-ball $B_{1}$, and its boundary is the web
consisting of the boundary of $M$ and an arc on the boundary of the
half-disks $\Delta_{2}^{+}$ and $\Delta_{0}^{-}$. This web is
isomorphic to the $1$-skeleton of a tetrahedron, and the foam
\eqref{eq:foam-union} is the complement of the neighborhood of a
tetrahedral point $t_{3}$ in $\Psi_{3} = R \cup D_{1} \cup D_{2} \cup
D_{3}$. So we have an isomorphism of pairs,
\begin{equation}\label{eq:B1-pair}
    (B_{1} , \Phi_{3,0}\cap B_{1}) = (S^{4}\sminus N_{t_{3}},
    \Psi_{3}\sminus N_{t_{3}}),
\end{equation}
where $N_{t_{3}}\subset S^{4}$ is a regular neighborhood of $t_{3}$.
 In particular, we have the following counterpart to Lemma~7.2 of
\cite{KM-unknot}.

\begin{lemma}\label{lem:two-step-connect-sum}
    The composite cobordism from $(Y, L_{2})$ to $(Y,L_{0})$ formed
    from the union of the foams $\Sigma_{2,1}$ and $\Sigma_{1,0}$ has
    the form
\[
            ( I \times Y , V_{2,0} ) \#_{t,t_{3}} (S^{4},\Psi_{3}),
\]
    where $V_{2,0}$ is a foam cobordism from $L_{2}$ to $L_{0}$ with a
    single tetrahedral point $t$.
\end{lemma}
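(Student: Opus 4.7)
The plan is to mimic the analysis that produced the identification \eqref{eq:B1-pair} for the triple composite, but now carried out for the two-step composite $\Sigma_{2,1}\cup\Sigma_{1,0}$. The key geometric object is the closed disk
\[
    \Delta_{1} = \Delta_{1}^{-}\cup\Delta_{1}^{+}\subset
        \Sigma_{2,1}\cup\Sigma_{1,0},
\]
formed by joining the half-disk $\Delta_{1}^{-}\subset\Sigma_{2,1}$ to $\Delta_{1}^{+}\subset\Sigma_{1,0}$ along the common slice $\{1/2\}\times Y$. The boundary circle $\partial\Delta_{1}$ lies on the underlying saddle-composite surface $\Sigma(K_{2},K_{1})\cup\Sigma(K_{1},K_{0})$, and the two tetrahedral points of the composite foam, namely $\Delta_{2}^{+}\cap\Delta_{1}^{-}$ and $\Delta_{1}^{+}\cap\Delta_{0}^{-}$, both lie on $\Delta_{1}$.

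First I would fix a small regular neighborhood $B_{1}\subset I\times Y$ of $\Delta_{1}$ and describe $B_{1}\cap(\Sigma_{2,1}\cup\Sigma_{1,0})$ piece by piece. It consists of $\Delta_{1}$ itself, a tubular neighborhood of $\partial\Delta_{1}$ within the underlying surface, and small half-disk collars of the two tetrahedral points which are pieces of $\Delta_{2}^{+}$ and $\Delta_{0}^{-}$. The framing calculation is precisely the one carried out for the triple composite in Lemma~7.2 of \cite{KM-unknot}, restricted to the two consecutive cobordisms $\Sigma_{2,1}$ and $\Sigma_{1,0}$: the twist contributed by traversing a single saddle is odd, so the neighborhood of $\partial\Delta_{1}$ in the composite saddle surface is a M\"obius band $M$ rather than an annulus.

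Second, I would match the resulting configuration $M\cup\Delta_{2}^{+}\cup\Delta_{1}\cup\Delta_{0}^{-}$ with the complement of a neighborhood of a tetrahedral point in $\Psi_{3}=R\cup D_{1}\cup D_{2}\cup D_{3}$. Choose $t_{3}=\partial D_{1}\cap\partial D_{2}\in R$; removing a regular neighborhood $N_{t_{3}}$ leaves $R\sminus N_{t_{3}}$ (a M\"obius band), $D_{1}\sminus N_{t_{3}}$ and $D_{2}\sminus N_{t_{3}}$ (two half-disks), and the untouched $D_{3}$, together with the two remaining tetrahedral points $\partial D_{3}\cap\partial D_{1}$ and $\partial D_{3}\cap\partial D_{2}$. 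The correspondence $M\leftrightarrow R\sminus N_{t_{3}}$, $\Delta_{1}\leftrightarrow D_{3}$, and $\Delta_{2}^{+},\Delta_{0}^{-}\leftrightarrow D_{1}\sminus N_{t_{3}},D_{2}\sminus N_{t_{3}}$ is forced by the seam-and-tetrahedral-point combinatorics on the boundary $\partial B_{1}$, which is a $3$-ball whose intersection with the foam is the $1$-skeleton of a tetrahedron. Checking that the self-intersection number ($R\cdot R=+2$ in $S^{4}$) and the framings of the attaching circles agree on the two sides is a direct calculation of the sort that appears throughout \cite{KM-jsharp,KM-unknot}, and gives the pair identification
\[
    (B_{1},\,(\Sigma_{2,1}\cup\Sigma_{1,0})\cap B_{1})
        \;\cong\;
    (S^{4}\sminus N_{t_{3}},\,\Psi_{3}\sminus N_{t_{3}}).
\]

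Finally, $V_{2,0}$ is defined to be the result of removing the interior of $B_{1}$ from $(I\times Y,\Sigma_{2,1}\cup\Sigma_{1,0})$ and gluing back the standard model of a neighborhood of a tetrahedral point. By construction this reverses the connect sum: the two tetrahedral points of the composite which lay in $B_{1}$ are replaced by a single tetrahedral point $t\in V_{2,0}$, and $V_{2,0}\#_{t,t_{3}}(S^{4},\Psi_{3})$ recovers $\Sigma_{2,1}\cup\Sigma_{1,0}$. I expect the main obstacle to be the framing/orientation bookkeeping in the middle step, specifically verifying that the tubular neighborhood of $\partial\Delta_{1}$ inside the composite of two saddles is genuinely a M\"obius band with the sign of half-twist that makes $R=M\cup\Delta_{1}$ have $R\cdot R=+2$; once this is established, the combinatorial matching with $\Psi_{3}\sminus N_{t_{3}}$ and the existence of $V_{2,0}$ follow immediately.
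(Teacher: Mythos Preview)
Your proposal is correct and follows essentially the same route as the paper: the paper identifies the regular neighborhood $B_{1}$ of $\Delta_{1}$ and shows that the foam inside it, namely $M\cup\Delta_{2}^{+}\cup\Delta_{1}\cup\Delta_{0}^{-}$, is isomorphic to $\Psi_{3}\sminus N_{t_{3}}$ (equation~\eqref{eq:B1-pair}), from which the lemma follows immediately. The only cosmetic difference is that the paper frames this discussion inside the triple composite $\Phi_{3,0}$ (where it will be reused for Lemma~\ref{lem:topology-of-three-step}) and then says ``in particular'', whereas you work directly inside the two-step composite; the geometric identification is the same.
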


Consider next the regular neighborhood $B_{2,1}$ of the union of two disks,
$\Delta_{2}\cup\Delta_{1}$. The regular neighborhood meets
$\Phi'_{3,0}$ in a plumbing of two M\"obius bands, which is a
twice-punctured $\RP^{2}$. There is an isomorphism of pairs,
\begin{equation}\label{eq:B21-pair}
              (B_{2,1},\Phi_{3,0}\cap  B_{2,1}) = (S^{4}\sminus N_{\delta},
              \Psi_{3}\sminus N_{\delta})
\end{equation}
where $\Psi_{3}$ is as in \eqref{eq:Psi-foams} as before, and $N_{\delta}$ is
the regular neighborhood of an arc $\delta\subset D_{3}$ which joins
the two  points  $p,q \in \partial D_{3}$ in $\Psi_{3}$. The points
$p$ and $q$ lie in the interior of the two arcs which into which
$\partial D_{3}$ is divided by the two tetrahedral points.

To examine the picture of \eqref{eq:B21-pair} further, we note the web
that arises as the boundary of $\Phi_{3,0}\cap B_{2,1}$ is also the
boundary of the foam $\Omega=N_{\delta}\cap \Psi_{3}$. The foam
$\Omega$ consists of two disks which comprise $N_{\delta}\cap R$
together with the rectangle $N_{\delta}\cap D_{3}$.
\begin{figure}
    \begin{center}
        \includegraphics[scale=.30]{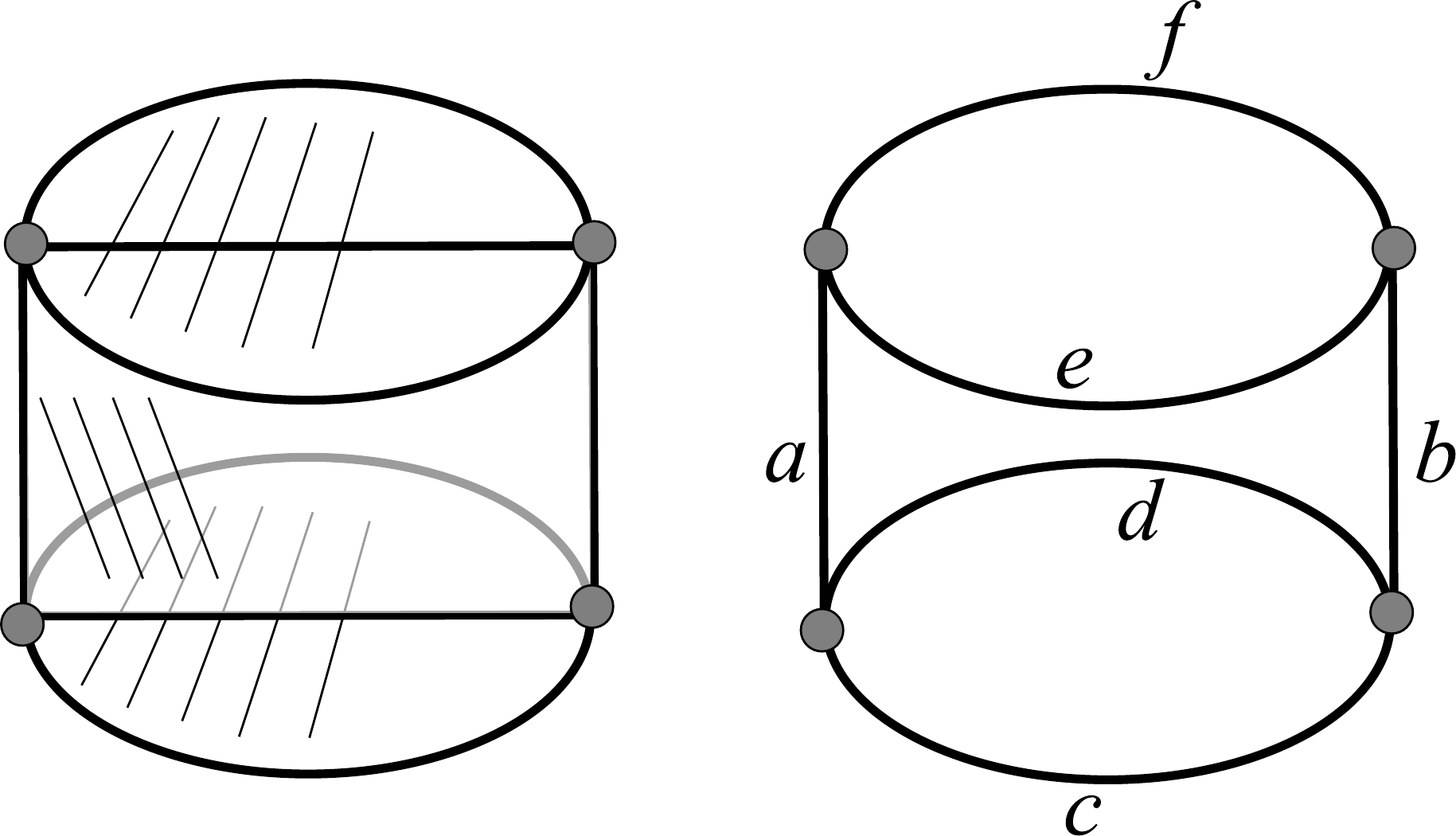}
    \end{center}
    \caption{\label{fig:Omega-and-Gamma}
    The foam $\Omega$, comprising two disks and a rectangle, and the
    web $\Gamma$ which is its boundary.}
\end{figure}
 See
Figure~\ref{fig:Omega-and-Gamma}. The foam formed from $\Phi_{3,0}$ removing
$\Phi_{3.0}\cap B_{2,1}$ and replacing it with the foam $\Omega$ is
isomorphic to the product foam $I\times L_{3} = I\times L_{0}$.
Stating this the other way round, we have the following counterpart to
Lemma~7.4 of \cite{KM-unknot}.

\begin{lemma}\label{lem:topology-of-three-step}
    The foam $\Phi_{3,0}$ in $I\times Y$ is obtained from the product
    foam $[0,1]\times L_{0}$ by removing a neighborhood $N$ of the arc
    $\{1/2\}\times \delta_{0}$ and replacing it with the foam
    $\Psi_{3}\sminus N_{\delta}$.
\end{lemma}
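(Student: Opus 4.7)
The lemma is essentially the inverse of the surgery description given in the paragraph immediately preceding it. There we observed that excising $\Phi_{3,0} \cap B_{2,1}$ from $\Phi_{3,0}$ and gluing the foam $\Omega$ back in its place, along the common web $\Gamma$, produces the product foam $I \times L_{0}$. Reversing this operation and invoking the identification \eqref{eq:B21-pair} of $\Phi_{3,0} \cap B_{2,1}$ with $\Psi_{3} \sminus N_{\delta}$, one recovers $\Phi_{3,0}$ from $I \times L_{0}$ by excising the region occupied by $\Omega$ and gluing in $\Psi_{3} \sminus N_{\delta}$. The remaining content of the lemma is the assertion that this excised region is a regular neighborhood of the arc $\{1/2\} \times \delta_{0}$.

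To verify this, I would work locally inside the image of $\Omega$ in $I \times L_{0}$. The edge $\delta_{0}$ of $L_{0}$ is contained in the ball $B$, and its two endpoints are vertices of $L_{0}$. Consequently the product foam $I \times L_{0}$ has, in a neighborhood of $\delta_{0}$, the following structure: a single face (the strip $I \times \delta_{0}$) meeting two seams (the $I$-fibers through the two endpoint-vertices of $\delta_{0}$) along a pair of arcs. A regular neighborhood of $\{1/2\} \times \delta_{0}$ in this foam therefore consists of a rectangle on the face, together with two disks on the two adjacent seams, joined to the rectangle along two arcs. This matches, term by term, the explicit description $\Omega = N_{\delta} \cap \Psi_{3}$ recalled from Figure~\ref{fig:Omega-and-Gamma}: the rectangle $N_{\delta} \cap D_{3}$ lies on the face $D_{3}$, and the two disks $N_{\delta} \cap R$ lie on seams of $\Psi_{3}$ at the two points $p,q \in \partial D_{3}$ where $\partial D_{3}$ meets $R$.

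The main point to watch is that the face/seam labelling of $\Omega$ inherited from the embedding $\Omega \subset \Psi_{3}$ is the same one that $\Omega$ picks up as a neighborhood of $\{1/2\} \times \delta_{0}$ inside $I \times L_{0}$. This is, however, forced by the boundary identification along the web $\Gamma$ set up in \eqref{eq:B21-pair} and used in the preceding paragraph: both $\Omega$ and $\Psi_{3} \sminus N_{\delta}$ share the boundary $\Gamma$ with a prescribed face/seam structure, and both the product model $I \times L_{0}$ and the complementary piece $\Phi_{3,0} \sminus B_{2,1}$ plug into $\Gamma$ compatibly with that structure. With that compatibility recorded, the lemma follows by assembling the two half-pictures along $\Gamma$.
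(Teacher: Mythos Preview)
Your proposal is correct and follows the same approach as the paper. The paper does not give an independent proof of this lemma at all: it presents the lemma as a direct restatement (``Stating this the other way round'') of the surgery description in the preceding paragraph, and your argument spells out precisely that reversal, together with the easy check that the copy of $\Omega$ sitting inside $I\times L_{0}$ is a regular neighborhood of $\{1/2\}\times\delta_{0}$.
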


\section{The chain-homotopies}

In order to continue our argument, we streamline our notation. We will
then follow closely the argument in \cite{KM-unknot}. We
write simply $Y_{i}$ for the bifold corresponding to the pair
$(Y,L_{i})$. We write $X_{1,0}$ for the bifold cobordism from $Y_{1}$
to $Y_{0}$ etc., and we write $X_{3,0}$ (for example) for the
composite cobordism from $Y_{3}$ to $Y_{0}$, corresponding to the foam
$\Phi_{3,0}$ in $I\times Y$. We write $B_{2,1}$ again for the regular
neighborhood of $\Delta_{3}\cup \Delta_{1}$ which we now regard as a
bifold ball, contained in the interior of $X_{3,0}$. We also have
$B_{1}$, the bifold regular neighborhood of $\Delta_{1}$, which
arrange to be contained in the interior of $B_{2,1}$. We similarly
have $B_{2}$, the regular neighborhood of $\Delta_{2}$. We write
$S_{i}$ for the boundary of $B_{i}$ and $S_{2,1}$ for the regular
neighborhood of $B_{2,1}$.

In all, the interior of $X_{3,0}$ contains five $3$-dimensional bifolds,
\begin{equation}\label{eq:five-bifolds}
    Y_{2}, S_{2}, S_{1}, Y_{1}, S_{2,1}.
\end{equation}
Arranged cyclically in the above order, each of these five bifolds
intersects the ones before and after it, but not the other two.
We equip $X_{3,0}$ with a bifold metric which is a product in the
two-sided collar of each of the five bifolds, and arrange also that
the bifolds meet orthogonally where they intersect. Given a
$4$-dimensional bifold $Z$ with boundary, having a metric which is a
product metric on a collar of $\partial Z$, we will write $Z^{+}$ for
the complete bifold obtained by attaching cylindrical ends to the
boundary components:
\[
             Z^{+} = Z \cup [0,\infty) \times \partial Z.
\]

After choosing perturbations, we have a chain complex $(C_{i}, D_{i})$
associated to $Y_{i}$, whose homology is the instanton Floer homology
group $J(Y_{i}; \mu)$. From each $X_{i+1,i}^{+}$, we obtain chain maps,
\[
         F_{i+1,i} : C_{i+1} \to C_{i}.
\]
We will just write $F$ for $F_{i+1,i}$ and $D$ for $D_{i}$, and so
write the chain condition (mod $2$) as
\[
             FD + DF = 0.
\]

Combining Lemma~\ref{lem:two-step-connect-sum} with
Proposition~\ref{prop:sum-with-Psi-2-3}, we learn that the composite
cobordism $X_{2,0}$ gives rise to the zero map from $J(Y_{2};\mu)$ to
$J(Y_{0};\mu)$. So $F\comp F$ induces the zero map in homology. The
proof supplies an explicit chain-homotopy $J=J_{2,0}$ (or $J_{i+2,i}$
in general), so that
\begin{equation}\label{eq:first-chain-homotopy}
                F F + DJ + JD = 0.
\end{equation}
The chain-homotopy $J$ is defined by counting instantons over a
$1$-parameter family of bifold metrics $g_{t}$ on $X_{2,0}^{+}$. 
For $t=0$, the metric is the restriction of our chosen metric on
$X_{3,0}$. For $t<0$, the metric is stretched across the collar of
$Y_{1}\subset X_{2,0}$, and for $t>0$ the metric is stretched along
the collar of $S_{1}\subset X_{2,0}$. 

We have learned here that the composite of any two consecutive maps in
the sequence \eqref{eq:J-exact-triangle} is zero. To prove exactness,
following the argument of \cite[Lemma 4.2]{OS-double-covers}, it
suffices to find chain-homotopies
\[
         K_{i+3,i} : C_{i+3} \to C_{i}
\]
for all $i$, such that
\begin{equation}\label{eq:second-chain-homotopy}
           FJ + JF + DK + KD : C_{i+3} \to C_{i}
\end{equation}
is an isomorphism. 

As in \cite{KM-unknot} and \cite{KMOS}, the map $K$ is constructed as
follows. For each pair of \emph{non-intersecting} bifolds among the
five bifolds \eqref{eq:five-bifolds}, we can construct a family of
metrics on $X_{3,0}$ parametrized by the quadrant $[0,\infty) \times
[0,\infty)$, by stretching in the collars of both of the bifolds.
There are five such non-intersecting pairs, and the corresponding five
quadrants of metrics fit together to form a family of metrics
parametrized by an open disk $P$.  The map $K$ is defined by counting
points in zero-dimensional moduli spaces over the family of metrics
parametrized by $P$, on $X_{3,0}^{+}$.

The family of metrics $P$ has a natural closure $\bar P$ which is a closed pentagon
whose paramatrize certain broken metrics on $X_{3,0}^{+}$, i.e. metrics
where one (or more) of the collars has been stretched to infinity, and
we regard the limiting space has having two (or more) new cylindrical
ends. Each side of the pentagon corresponds to a family of metrics
which is broken along one of the five $3$-dimensional bifolds, and the
vertices correspond to metrics which are broken along two of them (a
pair of bifolds which do not intersect). We write
\begin{equation}\label{eq:boundary-of-P}
          \partial P
                     = Q_{Y_{2}}\cup Q_{S_{2}}\cup Q_{S_{1}}\cup
                     Q_{Y_{1}}\cup Q_{S_{2,1}}.
\end{equation}

To prove \eqref{eq:second-chain-homotopy}, one considers
one-dimensional moduli spaces on the bifold $X_{3,0}$ over the parameter space
$P$, and applies as usual the principal that a one-manifold has an
even number of ends. The compactification of such a one-dimensional
moduli space contains points of a type that did not arise in the
argument in \cite{KM-unknot}, namely those corresponding to the bubbling off of
an instanton at a
tetrahedral point, which is a codimension-$1$ phenomenon. However, as in
\cite[Section~\ref{sec:functoriality}]{KM-jsharp}, the number of
endpoints of the moduli space which are accounted for by such bubbling
is even, so there is no new contribution from these endpoints. 
We arrive at a standard formula,
\[
               DK + KD + W = 0,
\]
where $W$ is a linear  maps defined by counting the number of
endpoints of the compactified moduli space which lie over $\partial
P$. Following the description of $\partial P$ as the union of five
parts in \eqref{eq:boundary-of-P}, we write $U$ as a sum of five
corresponding terms:
\begin{equation}\label{eq:proto-second-chain}
            DK + KD + U_{Y_{2}} +  U_{S_{2}} +  U_{S_{1}} +  U_{Y_{1}}
            +  U_{S_{2,1}} = 0.
\end{equation}

In the equation \eqref{eq:proto-second-chain}, the terms $U_{Y_{2}}$
and $U_{Y_{1}}$ are respectively $JF$ and $FJ$. The terms $U_{S_{1}}$
and $U_{S_{2}}$ are both zero, because they correspond to a connect
sum decomposition at a tetrahedral point, when one of the summands is
$\Psi_{3}$. So the formula reads
\[
             DK + KD + JF + FJ = U_{S_{2,1}}
\]
and we must show that $U_{S_{2,1}}$ is chain-homotopic to the
identity.

\section{Completing the proof}

The term $U_{S_{2,1}}$ counts endpoints of the compactified moduli
space of $X_{3,0}^{+}$ that arise as limit points when the length of the collar of
$S_{2,1}$ is stretched to infinity. As in \cite{KM-unknot},
identifying the number of such endpoints is a gluing problem, for
gluing along $S_{2,1}$. 

The two orbifolds that are being glued in this
case are as follows. The first piece is the regular neighborhood 
\[
       W \supset S_{1} \cup S_{2}
\]
of
the union $S_{1} \cup S_{2}$ in $X_{3,0}^{+}$. The second piece is the
complement of $W$. If we adapt Lemma~\ref{lem:topology-of-three-step}
from the language of foams to the language of orbifolds, we obtain a
description of these two orbifolds. The complement of $W$ in
$X_{3,0}^{+}$ is isomorphic to the complement of the arc $\delta_{0}$
in the cylindrical orbifold $\R\times Y_{0}$. Meanwhile, $W$ is
isomorphic to the complement of an arc $\delta$ in the orbifold
$4$-sphere $Z_{3}$ which corresponds to the foam $\Psi_{3} \subset
S^{4}$.

We write $W^{+}$ for the cylindrical-end orbifold obtained by
attaching $\R^{+}\times S_{2,1}$ to $W$. The arc
$Q_{S_{2,1}}\subset \partial P$ parametrizes a $1$-parameter family of
metrics on $W^{+}$. The main step now is to understand the
$1$-dimensional moduli space $M_{W}$ of solutions on $W^{+}$, lying over
this $1$-parameter family of metrics, and to understand the map
$M_{W}$ to the representation variety of the end $S_{2,1}$. 

\begin{proposition}
Let $R(S_{2,1})$ denote the representation variety parametrizing flat bifold
connections on $S_{2,1}$. Let $G$ denote the
one-parameter family of metrics on $W^{+}$ corresponding the interior
of the interval $Q(S_{2,1})$.  Let $M_{W}$ denote the $1$-dimensional
moduli space of solutions on $W^{+}$ over the family of metrics $G$,
and let $r$ be the restriction map to the end:
\[
           r: M_{W} \to R(S_{2,1})
\]
Then $R(S_{2,1})$ is a closed interval, and
for generic choice of
metric perturbations, $r$ maps $M_{W}$ properly and surjectively to
the interior of the interval $R(S_{2,1})$ with degree $1$ mod $2$.
\end{proposition}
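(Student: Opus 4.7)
The plan is to identify $S_{2,1}$ as the boundary $\partial N_{\delta}$ inside the bifold $Z_{3}$ associated to $(S^{4}, \Psi_{3})$, use this to describe $R(S_{2,1})$ explicitly as an interval, and then compute the degree of $r$ by reducing to the closed moduli space on $Z_{3}$ calculated in Section~\ref{sec:connect-sums}.

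First, to exhibit $R(S_{2,1})$ as a closed interval, I would work with the web $\Gamma \subset S_{2,1}$, which is the boundary of the foam $\Omega$ pictured in Figure~\ref{fig:Omega-and-Gamma} (two disks joined by a rectangle). From a presentation of $\pi_{1}$ of the complement of $\Gamma$, together with the orbifold conditions that meridians of edges map to involutions and that vertex stabilizers map to the standard Klein four-group $V \subset \SO(3)$, I would verify that no irreducible representations exist and that the reducible $O(2)$-representations form a one-parameter family, parametrized by an angular holonomy around a distinguished meridional loop. The range of this parameter is a closed interval whose two endpoints correspond to representations with holonomy contained in $V$; these are exactly the boundary values at $S_{2,1}$ of the broken-metric solutions at the two vertices $\{S_{2,1}, S_{1}\}$ and $\{S_{2,1}, S_{2}\}$ of the pentagon $\bar P$.

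For the moduli space $M_{W}$ on $W^{+}$, the identification $W \cong Z_{3} \setminus N_{\delta}$ reduces properness of $r$ to Uhlenbeck compactness for bifold instantons on the cylindrical end, plus control of bubble formation at interior points of $W$. Bubbling at tetrahedral, seam, or face points of $W$ contributes in even numbers by Propositions~\ref{prop:sum-with-Psi-2-3}, \ref{prop:seam-sums} and \ref{prop:face-sums}, and so does not affect the mod $2$ degree. The formal dimension count then shows $M_{W}$ is one-dimensional: the relevant action is $\kappa = 1/32$, the minimal action of a nontrivial solution on $(S^{4}, \Psi_{3})$ by the computations in Section~\ref{sec:connect-sums}, and the dimensions of $R(S_{2,1})$, the metric parameter $G$, and the relative moduli space combine to give $1$.

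The main obstacle is the degree count, which I would handle by a gluing argument. The closed bifold $Z_{3}$ has a unique unobstructed $O(2)$-instanton $A_{0}$ at action $1/32$. Writing $Z_{3} = W \cup_{S_{2,1}} N_{\delta}$, a neck-stretching argument identifies elements of the closed moduli space at action $1/32$ with compatible pairs of solutions on $W^{+}$ and $N_{\delta}^{+}$ asymptotic to a common $\alpha \in R(S_{2,1})$, provided $\alpha$ lies in the interior where the linearizations are surjective. For generic such $\alpha$, an explicit model computation on $N_{\delta}^{+}$ (a bifold $4$-ball containing the simple foam $\Omega$, to which the dimension formula and the $O(2)$-reducibility of $A_{0}$ apply directly) shows that the moduli space of solutions asymptotic to $\alpha$ is a single unobstructed point; matched against the unique closed instanton $A_{0}$, this forces $|r^{-1}(\alpha)| \equiv 1 \pmod 2$. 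This gives both surjectivity onto the interior and degree $1$. The essential technical task is the orbifold gluing and transversality, which I would handle in parallel with the analogous argument in \cite{KM-unknot}.
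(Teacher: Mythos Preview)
Your approach diverges from the paper's in the degree computation, and as written it has a genuine gap.

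First, your properness argument is confused. Propositions~\ref{prop:sum-with-Psi-2-3}, \ref{prop:seam-sums} and \ref{prop:face-sums} concern maps induced by connect-sum cobordisms; they say nothing about Uhlenbeck bubbling in a moduli space on $W^{+}$. Properness of $r$ is not a matter of ``bubbles contributing in even numbers''---if bubbling occurred, $r$ would simply fail to be proper. The correct argument is the one the paper gives: the relevant action is $\kappa=1/32$, while the smallest action carried by any bubble on a bifold is $1/8$, so no bubbling or energy loss on the end is possible.

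Second, and more seriously, your gluing along $S_{2,1}$ to reassemble $Z_{3}=W\cup_{S_{2,1}}N_{\delta}$ does not produce the degree count you claim. The moduli space $M_{W}$ lives over the \emph{one-parameter family} $G$, whereas the ``unique unobstructed instanton on $Z_{3}$'' was computed (in Section~\ref{sec:connect-sums}) for a single specific metric, the one descending from Fubini--Study on the $8$-fold cover. Gluing $M_{W}$ to the flat solutions on $N_{\delta}^{+}$ yields the $Z_{3}$ moduli space over a family of metrics induced by $G$, which is one-dimensional, not a single point; there is no direct way to extract $|r^{-1}(\alpha)|\equiv 1$ from this.

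The paper's argument is quite different and uses the family $G$ directly. The two endpoints of $G$ correspond to stretching $S_{1}$ or $S_{2}$ to infinity, giving connect-sum decompositions $W = Z_{3}\#_{t,t'} W'_{i}$ at a tetrahedral point, where $W'_{i}$ is a bifold ball carrying a unique flat $V_{4}$-connection. The essential geometric observation---which your proposal omits---is that the flat connections on $W'_{1}$ and $W'_{2}$ restrict on $S_{2,1}$ to the \emph{two opposite endpoints} $0$ and $\pi/2$ of the interval $R(S_{2,1})$. Together with a gluing count (one end of $M_{W}$ over each endpoint of $G$, since the gluing parameter $V$ coincides with the automorphism group of the $V_{4}$-connection on $W'_{i}$), this shows $r$ maps the ends of $M_{W}$ properly to opposite ends of the interval, which is exactly degree~$1$ mod~$2$.
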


\begin{proof}
    The orbifold $S_{2,1}$ corresponds to the web $\Gamma\subset
    S^{3}$ show in Figure~\ref{fig:Omega-and-Gamma}. In an $\SO(3)$
    representation corresponding to a  flat bifold
    connection on $S_{2,1}$, the generators corresponding to the
    edges $a$ and $b$ will map to the same involution (say $i$) in $\SO(3)$. The
    generators corresponding to the remaining four edges map to
    involutions which are rotations about axes orthogonal to the axis
    of $i$. Up to conjugacy, the representation is determined by a
    single angle 
     \[
                 \theta \in [0,\pi/2]
     \]
    which is the angle between the axes of rotation corresponding to
    the edges $c$ and $e$. Thus $R(S_{2,1})$ is an interval.

    The solutions belonging to the $1$-dimensional moduli space $M_{W}$ have action
    $\kappa=1/32$. Since the smallest action that can occur at a
    bubble is $1/8$, there is no possibility of non-compactness due to
    bubbling, nor can action be lost from the cylindrical end.  The
    moduli space $M_{W}$ is therefore proper over the interior
    of $G$.

     The two limit points of the $1$-parameter family of metrics
     $G$ correspond to pulling out a
     neighborhood of either $\Delta_{1}$ or $\Delta_{2}$ from
     $W$. (See Figure~\ref{fig:mobiusshadedwithfeetfoamtwodisks}.) In
     either case, this is a connect-sum decomposition of $W^{+}$, in
     which the summand that is being pulled off is a copy of $Z_{3}$
     (the orbifold corresponding to $\Psi_{3}$) and the sum is at a
     tetrahedral point. Thus, in both cases, we see  connect-sum
     decompositions,
     \[
     \begin{aligned}
         W &= Z_{3} \#_{t,t'} W'_{1} \\
                  W &= Z_{3} \#_{t,t'} W'_{2} \\
     \end{aligned}
     \]
     corresponding to pulling out a neighborhood of $\Delta_{1}$ or
     $\Delta_{2}$ respectively. The orbifolds $W'_{1}$ and $W'_{2}$
     correspond to the foams $F_{1}$ and $F_{2}$ shown in
\begin{figure}
    \begin{center}
        \includegraphics[scale=.30]{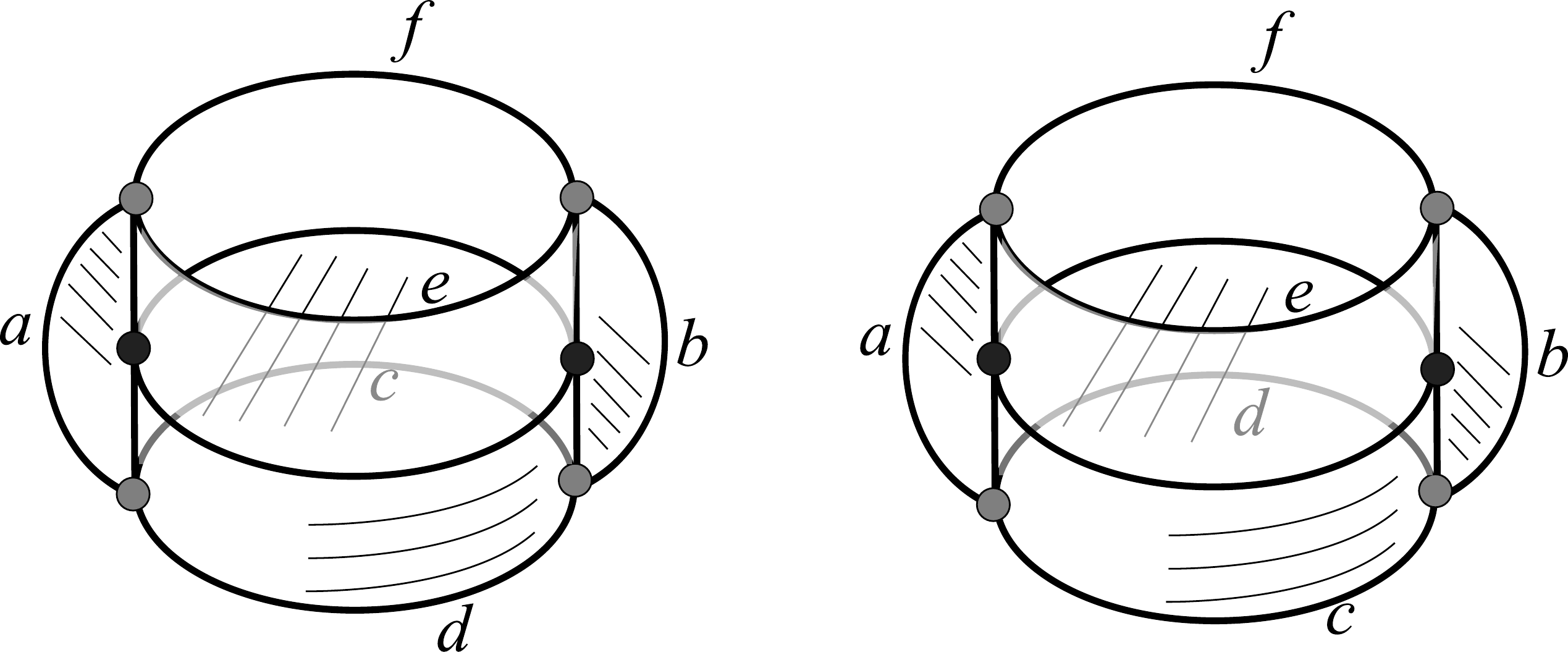}
    \end{center}
    \caption{\label{fig:foams-for-K}
    Two isomorphic foams $F_{1}$ (left) and $F_{2}$ (right),
    corresponding to the bifolds $W_{1}'$ and $W_{2}'$.
      Their boundaries are identified with $\Gamma$
    in two different ways.}
\end{figure}
     Figure~\ref{fig:foams-for-K}, regarded as a foams in $B^{4}$ with
     boundary $\Gamma$. These two foams are isomorphic, but not by an
     isomorphism which is the identity on the boundary. Thus, as
     drawn, $F_{1}$ and $F_{2}$ are the same foam, with two different
     identifications of the boundary with $\Gamma$. The
     identifications are indicated by the labeling of the edges in the
     figure.

    Since the smallest action of a moduli space on $Z_{3}$ is $1/32$,
    the limiting solution on $W'_{1}$ or $W'_{2}$ in either case must
    have action $0$, and must therefore be flat. For each of the
    (isomorphic) bifolds $W'_{1}$ and $W'_{2}$,  there is just one
    flat bifold connection up to conjugacy.
\begin{figure}
    \begin{center}
        \includegraphics[scale=.30]{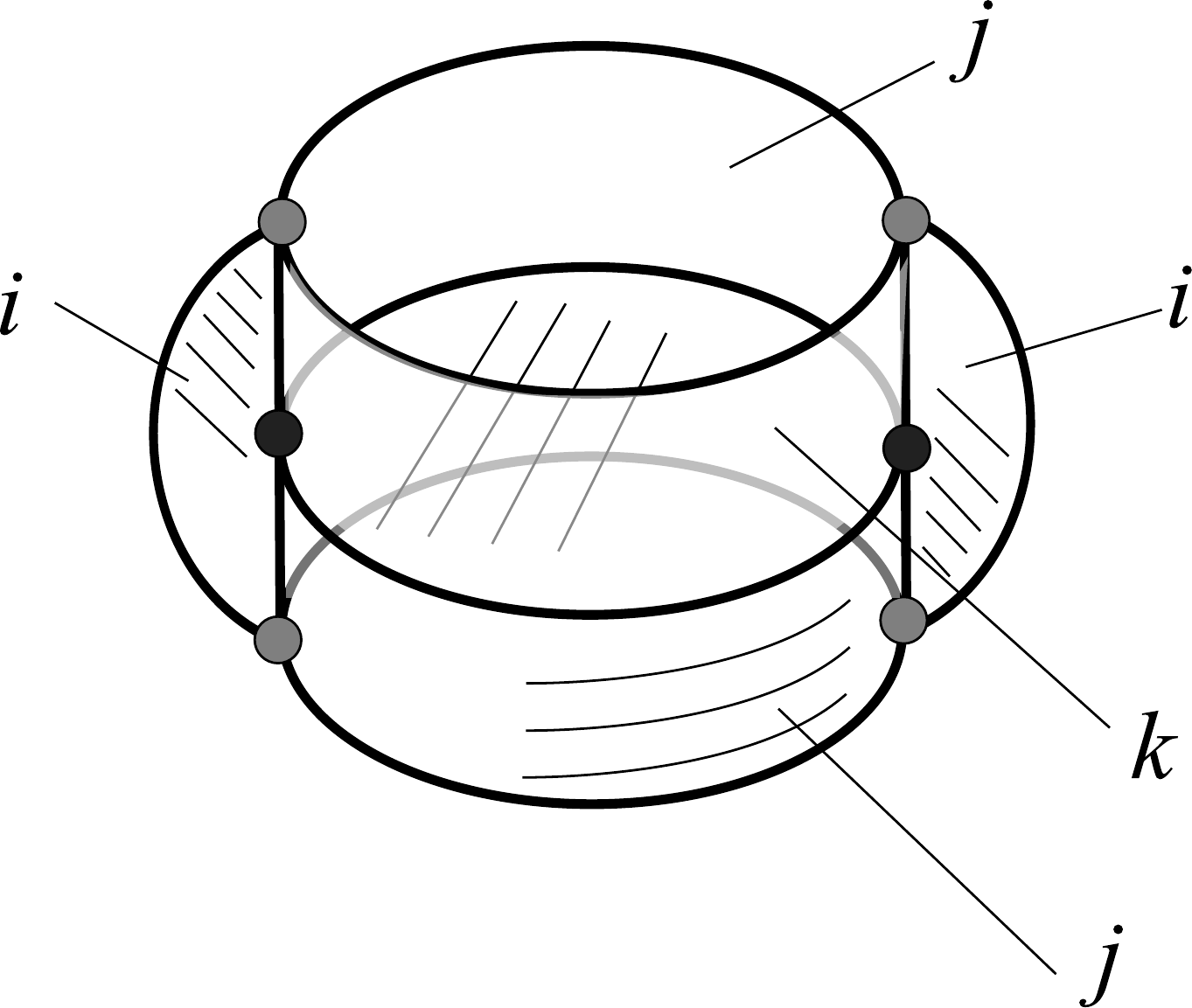}
    \end{center}
    \caption{\label{fig:V-connection}
    The $V_{4}$-connection for the bifolds $W'_{i}$.}
\end{figure} 
    This unique connection is
    a $V_{4}$-connection, in which the links of the faces are mapped to
    the non-trivial elements $\{i,j,k\}$ of $V_{4}$ as shown in
    Figure~\ref{fig:V-connection}. Inspecting
    Figure~\ref{fig:foams-for-K}, we see that, in the case of
    $W'_{1}$, the links of the edges $c$ and $e$ for $\Gamma$ are both
    mapped to $j$, while for $W'_{2}$, one is mapped to $j$ and the
    other to $k$. That is to say, the unique bifold connection on
    $W'_{1}$ (respectively, $W'_{2}$) restricts to the endpoint $0$
    (respectively $\pi/2$) in the representation variety $R(S_{2,1})
    \cong [0,\pi/2]$.  

    To summarize this, if we divide the ends of $M_{W}$ into two
    classes, according to which end of $G$ they lie over, then  the
    restriction map $r$ maps the ends of $M_{W}$ belonging to these
    two classes properly to the two different ends of the interval
    $(0,\pi/2)$. An analysis of the glueing problem for the connected
    sum shows that there is just one end in each class. Indeed, the connected
    sum is the same as the second case of
    Proposition~\ref{prop:sum-with-Psi-2-3}, but now the gluing parameter
    $V$ extends as the group of automorphisms of the bifold connection
    on $W'_{1}$ (or $W'_{2}$), so we have just one end instead of two.
   The
    proposition follows.
\end{proof}

The remainder of the proof that $U_{S_{2,1}}$ is chain-homotopic to the
identity follows the argument in \cite{KM-unknot}; and
Theorem~\ref{thm:exact-triangle} follows.

\section{Deducing the other exact triangles}
\label{sec:deducing-other}

We have now proved Theorem~\ref{thm:exact-triangle}, which becomes
Theorem~\ref{thm:intro-three-bar} when applied to webs in $\R^{3}$. We
now turn to Theorem~\ref{thm:intro-one-bar}. Because of the cyclic
symmetry, it is only necessary to treat one of the three cases, so we
take $i=1$. A proof can be constructed by following the same general
outline as the previous sections. The composite cobordisms have the
same description as shown in
Figure~\ref{fig:mobiusshadedwithfeetfoamtwodisks}, except that the
disks $\Delta_{0}^{-}$, $\Delta_{2}$ and $\Delta_{3}^{+}$ are
absent. The proof proceeds as before, except that the role of
$\Psi_{2}$ will now be played by $\Psi_{0}$ and the role of $\Psi_{3}$
by $\Psi_{1}$. We also lose some of the symmetry between the three
webs, so the proof needs to treat three cases.

Rather than repeat the details, we give here an alternative argument,
showing how to deduce Theorem~\ref{thm:intro-one-bar} from 
Theorem~\ref{thm:intro-three-bar}  by applying the basic properties of
$\Jsharp$ and the results of section~\ref{sec:connect-sums}.

  Let $L_{2}$, $L_{1}$ and $L_{0}$ be three webs which again differ
  only inside a ball, as indicated in \eqref{eq:intro-L-pics}. Let
  $\tilde L_{i}$ be obtained from $L_{i}$ by attaching an extra edge,
  as shown in the following diagram:
\begin{equation}\label{eq:L-tilde-pics}
        \tilde L_{2} = \mathfig{0.10}{figures/Graph-Xbar-extra}, \qquad
        \tilde L_{1} = \mathfig{0.10}{figures/Graph-H-extra}, \qquad
        \tilde L_{0} = \mathfig{0.10}{figures/Graph-I-extra}.
\end{equation}
In each case, the added edge is the top edge in the diagram, which we
call $e$, so
\[
            \tilde L_{i} = L_{i} \cup e.
\]
We have the standard cobordisms $\Sigma_{i,i-1}$ from $L_{i}$ to
$L_{i-1}$ as before, and these give rise to cobordisms
\[
            \tilde\Sigma_{i,i-1} = \Sigma_{i,i-1} \cup [0,1]\times e
\]
from $\tilde L_{i}$ to $\tilde
L_{i-1}$. 

Theorem~\ref{thm:intro-three-bar} tells us that we have an
exact sequence
  \[
        \cdots \longrightarrow \Jsharp(\tilde L_{2}) \longrightarrow
                 \Jsharp(\tilde L_{1}) \longrightarrow
                  \Jsharp(\tilde L_{0}) \longrightarrow
                    \Jsharp(\tilde L_{2}) \longrightarrow \cdots
  \]
where the maps are those arising from the cobordisms
$\tilde\Sigma_{i,i-1}$, or in pictures:
\begin{equation}
    \label{eq:tilde-L-sequence}
        \cdots \longrightarrow   \mathfig{0.10}{figures/Graph-Xbar-extra} \longrightarrow
                  \mathfig{0.10}{figures/Graph-H-extra}  \longrightarrow
                    \mathfig{0.10}{figures/Graph-I-extra}  \longrightarrow
                      \mathfig{0.10}{figures/Graph-Xbar-extra}  \longrightarrow \cdots
\end{equation}
where the application of $\Jsharp$ to these terms is implied.
 If we apply the ``triangle relation''
\cite[Proposition~\ref{prop:triangle-relation}]{KM-jsharp} to $\tilde
L_{0}$ we see that there is an isomorphism on $\Jsharp$ between
$L_{0}$ and $\tilde L_{0}$:
\[
    \mathfig{0.10}{figures/Graph-I}  \stackrel{\cong}{\longrightarrow}
    \mathfig{0.10}{figures/Graph-I-extra}   .
\]
From the square relation
\cite[Proposition~\ref{prop:square-relation}]{KM-jsharp}, we obtain
isomorphisms
\begin{equation}\label{eq:square-relation-1}
        \mathfig{0.10}{figures/Graph-Res1}  \oplus \mathfig{0.10}{figures/Graph-Res0}
           \stackrel{\cong}{\longrightarrow}  \mathfig{0.10}{figures/Graph-H-extra} 
\end{equation}
and
\begin{equation}\label{eq:square-relation-2}
        \mathfig{0.10}{figures/Graph-X}  \oplus \mathfig{0.10}{figures/Graph-Res0}
           \stackrel{\cong}{\longrightarrow} \mathfig{0.10}{figures/Graph-Xbar-extra} .
\end{equation}
Using these isomorphism to substitute for the terms in the exact
sequence \eqref{eq:tilde-L-sequence}, we obtain an isomorphic exact
sequence
\begin{equation}
    \label{eq:mod-sequence}
        \cdots \stackrel{\sigma_{2}}{\longrightarrow}
             \begin{bmatrix}
            \mathfig{0.08}{figures/Graph-X} \\ \oplus \\ \mathfig{0.08}{figures/Graph-Res0}
            \end{bmatrix} 
       \stackrel{\sigma_{1}}{\longrightarrow}
              \begin{bmatrix}
                   \mathfig{0.08}{figures/Graph-Res1} \\ \oplus \\
                   \mathfig{0.08}{figures/Graph-Res0}
                  \end{bmatrix} 
      \stackrel{\sigma_{0}}{\longrightarrow}
                    \mathfig{0.08}{figures/Graph-I} 
        \stackrel{\sigma_{2}}{\longrightarrow}
             \begin{bmatrix}
            \mathfig{0.08}{figures/Graph-X} \\ \oplus \\ \mathfig{0.08}{figures/Graph-Res0}
            \end{bmatrix}
    \stackrel{\sigma_{1}}{\longrightarrow} \cdots
\end{equation}
The maps $\sigma_{i}$ in this sequence are obtained from the
foam cobordisms $\tilde\Sigma_{i+1,i}$ and the isomorphisms from the
triangle and square relations. 

The claim in Theorem~\ref{thm:intro-one-bar} (for $i=0$) is that there is an exact
sequence
\begin{equation}
    \label{eq:pic-one-bar-sequence}
        \cdots \stackrel{\tau_{2}}{\longrightarrow}
            \mathfig{0.08}{figures/Graph-X}
       \stackrel{\tau_{1}}{\longrightarrow}
                   \mathfig{0.08}{figures/Graph-Res1}
      \stackrel{\tau_{0}}{\longrightarrow}
                    \mathfig{0.08}{figures/Graph-I} 
        \stackrel{\tau_{2}}{\longrightarrow}
            \mathfig{0.08}{figures/Graph-X}
    \stackrel{\tau_{1}}{\longrightarrow} \cdots
\end{equation}
where the maps $\tau_{i}$ arise from the standard cobordisms. The
exactness of this sequence will follow if we can show the following
relations between the maps:

\begin{proposition}\label{prop:same-maps}
    The maps $\sigma_{i}$ and $\tau_{i}$ in the above diagrams are
    related by
  \[
      \sigma_{1} = \begin{bmatrix} \tau_{1} & 0 \\ 0 &
          1 \end{bmatrix}
      , \qquad
      \sigma_{0} = \begin{bmatrix} \tau_{0} & 0\end{bmatrix}  
       , \qquad
              \sigma_{2} = \begin{bmatrix} \tau_{2} \\
                  0 \end{bmatrix}.
  \]
\end{proposition}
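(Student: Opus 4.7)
The plan is to translate the matrix statement into a collection of identities between $\Jsharp$ evaluated on explicit composite foams, and then to verify each identity by recognizing the composite as a standard cobordism, possibly modified by a local connected-sum of the type analysed in section~\ref{sec:connect-sums}.

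First I would fix foam-level representatives of the three isomorphisms appearing in \eqref{eq:square-relation-1}, \eqref{eq:square-relation-2}, and the triangle isomorphism $\Jsharp(\tilde L_0) \cong \Jsharp(L_0)$. Each of these isomorphisms is realised in \cite{KM-jsharp} by an explicit pair of foam cobordisms built out of standard local caps, disks, and saddles. Composing these with $\tilde\Sigma_{i+1,i} = \Sigma_{i+1,i} \cup ([0,1]\times e)$ and using functoriality of $\Jsharp$, each matrix entry of $\sigma_0, \sigma_1, \sigma_2$ becomes $\Jsharp$ applied to a concrete foam in $I\times Y$ between two of the webs $K_0, K_1, K_2, L_0$.

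Next I would evaluate these entries one at a time, in three groups. For the diagonal entries that should equal a $\tau_i$, the composite foam is isotopic rel boundary to the standard cobordism on which $\tau_i$ is built, typically with an extra ``trivial'' connected-sum summand $(S^4, \Psi_2)$ meeting the cobordism at a tetrahedral or seam point; Propositions~\ref{prop:sum-with-Psi-2-3} and \ref{prop:seam-sums} then show the induced map is unchanged, and likewise a $\Psi_0$-face summand is absorbed using Proposition~\ref{prop:face-sums}. For the single diagonal entry of $\sigma_1$ claimed to be $1$, I expect the saddle of $\Sigma_{1,0}$ to be capped on both ends by complementary disks coming from the square-relation isomorphisms, producing a foam isotopic to the product cobordism on $K_0 = \mathfig{0.06}{figures/Graph-Res0}$, so inducing the identity.

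For the off-diagonal entries, which should vanish, the composite foam ought in each case to contain a local piece modelled on $(S^4, \Psi_n) \setminus N$ with $n \in \{1, 3\}$ glued at a tetrahedral point or a seam, or on a $\Psi_0$ or $\Psi_2^{-}$ face configuration whose ``wrong'' face is involved; in every case one of Propositions~\ref{prop:sum-with-Psi-2-3}, \ref{prop:seam-sums}, \ref{prop:face-sums} or \ref{prop:sum-with-Psi-2-minus} forces the entry to be zero. The main obstacle is the topological identification: in each off-diagonal composite one must explicitly draw the foam (with the extra edge $e$ from \eqref{eq:L-tilde-pics} interacting with the disks and saddles of $\Sigma_{i+1,i}$) and recognise, after an isotopy, a closed sub-foam of one of the prescribed types. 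Once all eight entries have been so evaluated, the three matrix identities of Proposition~\ref{prop:same-maps} follow, and with them the deduction of Theorem~\ref{thm:intro-one-bar} from Theorem~\ref{thm:intro-three-bar}.
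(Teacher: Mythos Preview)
Your overall strategy---unwind the square/triangle isomorphisms so that each matrix entry becomes $\Jsharp$ of an explicit composite foam, then identify that foam as a standard cobordism modified by a closed summand from section~\ref{sec:connect-sums}---is exactly the paper's approach, and the paper likewise only works out one entry in detail (the $(1,1)$-entry of $\sigma_1$).

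There is, however, a concrete mismatch in your expectations for the diagonal $\tau_i$ entries.  You anticipate the closed summand to be $(S^4,\Psi_2)$ attached at a tetrahedral or seam point, but the target cobordism $T_{2,1}$ from $K_2$ to $K_1$ is a plain saddle with no tetrahedral points, so that picture cannot literally hold.  In the paper's treatment of the $(1,1)$-entry of $\sigma_1$, the composite foam $S$ is closed off with an external disk $D$ to produce a closed foam $\bar S$ in $S^4$, and one has $S = T_{2,1}\#_{f,g}\bar S$ at a \emph{face} point; the identification is $\bar S\cong\Psi = D^+\cup D^-\cup M$ (the double-M\"obius foam of Proposition~\ref{prop:sum-with-double-Mobius}) with $D$ corresponding to $D^+$.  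It is Proposition~\ref{prop:sum-with-double-Mobius}, which you do not invoke, that finishes this case---not Propositions~\ref{prop:sum-with-Psi-2-3} or \ref{prop:seam-sums}.  The $\Psi_2$-at-a-tetrahedral-point argument does appear, but only as a preliminary lemma replacing $\tilde\Sigma_{2,1}$ by the simpler $\tilde T_{2,1}$ in the middle of the movie.  You should expect the same foam $\Psi$ (and its $D^-$ case for the vanishing entries) to recur in the remaining entries.
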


For the proof of the proposition, we need the following lemma.

\begin{lemma}
    Let $\Sigma_{2,1}$ be the standard cobordism with a single
    tetrahedral point,
   \[
                 \mathfig{0.10}{figures/Graph-Xbar}
                 \stackrel{\Sigma_{2,1}}{\longrightarrow} 
                 \mathfig{0.10}{figures/Graph-H}   .            
    \]
    Let $T_{2,1}$ be the standard cobordism from $K_{2}$ to $K_{1}$,
     \[
                    \mathfig{0.10}{figures/Graph-X}
                 \stackrel{T_{2,1}}{\longrightarrow} 
                 \mathfig{0.10}{figures/Graph-Res1} ,
     \]
     and let $\tilde T_{2,1}$ be the union of $T_{2,1}$ with a product
     $[0,1]\times f$, where $f$ is an extra edge:
      \[
                 \mathfig{0.10}{figures/Graph-Xbar-shift}
                 \stackrel{\tilde T_{2,1}}{\longrightarrow} 
                 \mathfig{0.10}{figures/Graph-H-shift}   .            
      \]
     Then $\Sigma_{2,1}$ and $\tilde T_{2,1}$ give rise to the same
     map on $\Jsharp$.
\end{lemma}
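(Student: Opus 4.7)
The proof strategy is to exhibit $\Sigma_{2,1}$ as a connected sum of $\tilde T_{2,1}$ with the standard foam $(S^{4}, \Psi_{2})$ at a seam point, and then to apply Proposition~\ref{prop:seam-sums} in the case $n=2$, which asserts that such a connect sum leaves the induced linear map on $J$ unchanged.

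The preliminary step is to recognize that the boundary webs of $\tilde T_{2,1}$ (the ``shifted'' pictures $K_{i}\cup f$ for $i=2,1$) are identified with $L_{i}$ after an ambient isotopy that moves the free edge $f$ into the position of $\delta_{i}$ inside the ball. With these identifications, both $\Sigma_{2,1}$ and $\tilde T_{2,1}$ become foam cobordisms from $L_{2}$ to $L_{1}$ whose underlying ``saddle'' cores agree. They differ only inside a small bifold ball $B$: in $\Sigma_{2,1}$, the foam $B\cap\Sigma_{2,1}$ contains the tetrahedral point $\Delta^{+}_{2}\cap\Delta^{-}_{1}$ and is exactly the standard local model $\Psi_{2}\setminus N_{t_{2}}$; whereas in $\tilde T_{2,1}$ the corresponding local piece is a regular neighborhood of a seam point (for instance a point on the vertical seam $[0,1]\times\partial f$). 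Closing up these two local pieces along the common tetrahedral 1-skeleton on $\partial B$ produces a closed foam in $S^{4}$ carrying exactly one tetrahedral point.

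The topological identification I would then carry out is that this closed difference foam agrees with $\Psi_{2}$. This amounts to checking the combinatorial structure and matching the numerical invariants (one tetrahedral point, self-intersection $+1$, Euler characteristic $3$) against those of $\Psi_{2}$, after which one obtains
\[
    \Sigma_{2,1} \;\cong\; \tilde T_{2,1} \mathbin{\#_{s,s_{2}}} (S^{4},\Psi_{2}),
\]
the connect sum at seam points $s\in\tilde T_{2,1}$ and $s_{2}\in\Psi_{2}$. Proposition~\ref{prop:seam-sums} with $n=2$ then gives $J(\Sigma_{2,1};\nu)=J(\tilde T_{2,1};\nu)$; adjoining the Hopf link with its marking data in the $\Jsharp$ construction preserves this equality.

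The main obstacle is precisely this middle step: the explicit verification that $\Sigma_{2,1}\setminus(B\cap\Sigma_{2,1})$ is diffeomorphic (as a bifold cobordism with boundary the tetrahedral 1-skeleton) to $\tilde T_{2,1}$ minus a seam neighborhood. This requires tracking carefully how the disks $\Delta^{+}_{2}$, $\Delta^{-}_{1}$ and the product rectangle $[0,1]\times f$ are positioned relative to the saddle, and checking that the combinatorics of the faces, seams and identifications of boundary edges of $\Gamma$ match those of $\Psi_{2}$ after the connect-sum.
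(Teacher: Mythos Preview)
Your overall idea---that $\Sigma_{2,1}$ and $\tilde T_{2,1}$ differ by a connect sum with $(S^{4},\Psi_{2})$ and that one of the propositions of section~\ref{sec:connect-sums} then finishes the job---is the right one, but the paper executes it differently and more directly. The paper observes that $\tilde T_{2,1}$ is the connect sum $\Sigma_{2,1}\#_{t,t_{2}}(S^{4},\Psi_{2})$ at the \emph{tetrahedral} point $t$ of $\Sigma_{2,1}$, and then invokes Proposition~\ref{prop:sum-with-Psi-2-3} rather than Proposition~\ref{prop:seam-sums}. This direction is the natural one: the visible difference between the two foams is that $\Sigma_{2,1}$ carries a tetrahedral point while $\tilde T_{2,1}$ does not, and the piece $\Psi_{2}\setminus N_{t_{2}}$ (a M\"obius band with two half-disks attached along arcs) is exactly what replaces the cone neighbourhood of $t$ to produce the product rectangle $[0,1]\times f$ attached to the saddle. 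The whole proof is then one line.

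Your write-up, by contrast, has the roles reversed and contains internal inconsistencies that would have to be repaired. You say that $B\cap\Sigma_{2,1}$ ``contains the tetrahedral point'' and is ``exactly $\Psi_{2}\setminus N_{t_{2}}$'', but $\Psi_{2}\setminus N_{t_{2}}$ has had its tetrahedral point removed, so this is self-contradictory. You also say that $B\cap\tilde T_{2,1}$ is a seam neighbourhood and that the two pieces are glued along a ``common tetrahedral $1$-skeleton'', but the link of a seam point is a theta-graph, not a tetrahedral $1$-skeleton, so these boundaries do not match. For your seam-sum conclusion $\Sigma_{2,1}\cong\tilde T_{2,1}\#_{s,s_{2}}(S^{4},\Psi_{2})$ to be well-posed you would need $B\cap\Sigma_{2,1}\cong\Psi_{2}\setminus N_{s_{2}}$ with both local pieces bounded by a theta-graph, and that identification (which you flag as the main obstacle) is neither stated correctly nor verified. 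The tetrahedral decomposition the paper uses sidesteps all of this.
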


\begin{proof}
    The cobordism $\tilde T_{2,1}$ is isomorphic to a connect sum of
    $\Sigma_{2,1} \#_{t,t_{2} }\Psi_{2}$, where $t$ is the tetrahedral
    point. The result follows from Proposition~\ref{prop:sum-with-Psi-2-3}.
\end{proof}

\begin{proof}[Proof of Proposition~\ref{prop:same-maps}]
    We illustrate the arguments with one case, showing that the top
    left entry in the matrix for $\sigma_{1}$ is equal to
    $\tau_{1}$. 
\begin{figure}
    \begin{center}
        \includegraphics[scale=.50]{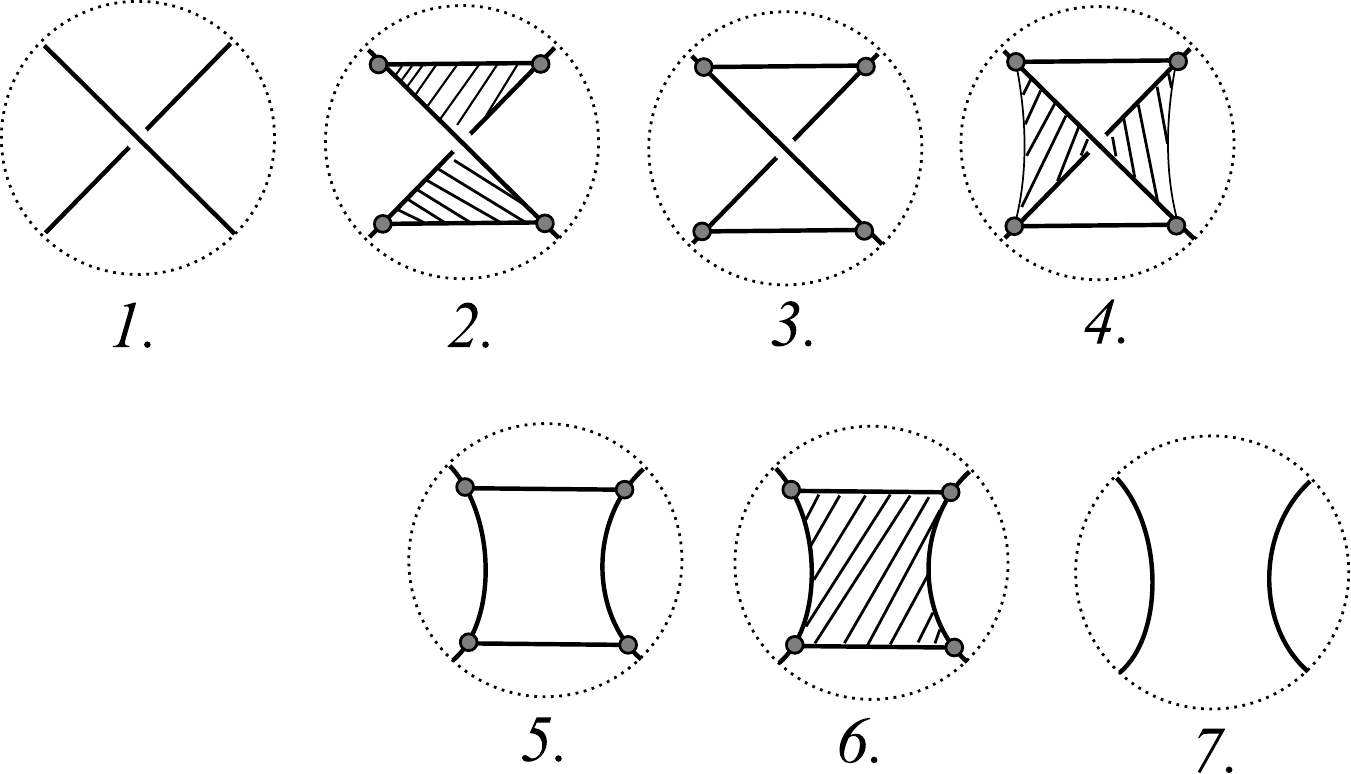}
    \end{center}
    \caption{\label{fig:sigma-is-tau}
    A movie for a foam equivalent to the top-left entry of $\sigma_{1}$.}
\end{figure}  
Consider the foam described by the movie in Figure~\ref{fig:sigma-is-tau}.    
The first
    three frames of the movie realize the first component of the
    isomorphism \eqref{eq:square-relation-2}. Frames 3--5 are the
    addition of a standard $1$-handle, which realize the same map as
    $\tilde\Sigma_{2,1}$, by the lemma above. Frames 5--7 realize the
    first component of the inverse of the isomorphism
    \eqref{eq:square-relation-1}. Taken together, the foam described
    by all seven frames gives a map equal to the top-left component of
    $\sigma_{1}$. 

    Regard the movie as defining a foam $S$ in the $4$-ball $[1,7]
    \times B^{3}$. The boundary of this foam is an unknot consisting
    of the two arcs at $t=1$, the two arcs at $t=7$, and the product
     $[1,7] \times \{\text{four points on the boundary}\}$. Let $\bar
    S$
    be closed foam in $\R^{4}$ obtained by attaching a disk  $D$ to this
    unknot, on the outside of the ball. Then, tautologically,
    \[
                      S = T_{2,1} \# \bar S,
     \]
    where $T_{2,1}$ is the standard $1$-handle cobordism from $K_{2}$ to
    $K_{1}$, and the connect sum with $\bar S$ is made at a point of
    $D\subset \bar S$. The claim is therefore that $T_{2,1}$ and $T_{2,1}\#
    \bar S$ define the same map. An examination of the movie shows
    that  \[ \bar S \cong \Psi, \] where $\Psi$ is the foam that 
    appears in Proposition~\ref{prop:sum-with-double-Mobius}, in such
    a way that the disk $D$ in $\bar S$ corresponds to the disk
    $D^{+}$ in $\Psi$. So the
    present claim follows from that proposition.
\end{proof}

\section{The octahedral diagram}

We now turn to the diagram in Figure~\ref{fig:J-octahedron}, and will
verify the properties discussed in the introduction. They are
summarized in the following theorem.

\begin{theorem}\label{thm:octahedron-statement}
In the diagram of standard cobordisms pictured in
Figure~\ref{fig:J-octahedron}, the triangles involving
\begin{enumerate}
\setlength{\itemsep}{0pt}
\item $L_{0}$, $K_{2}$, $K_{1}$,
\item $L_{1}$, $K_{0}$, $K_{2}$, 
\item $L'_{2}$, $K_{0}$, $K_{1}$, and
\item $L'_{2}$, $L_{0}$, $L_{1}$
\end{enumerate}
become exact triangles on applying $\Jsharp$. The faces
\begin{enumerate}
\setcounter{enumi}{4}
\setlength{\itemsep}{0pt}
\item \label{item:first-comm-face} 
      $K_{0}$, $K_{2}$, $K_{1}$,
\item  \label{item:second-comm-face}
      $L_{0}$, $K_{2}$, $L_{1}$,
\item  \label{item:third-comm-face}
      $K_{1}$, $L_{2}'$, $L_{0}$, and
\item  \label{item:fourth-comm-face}
      $L_{1}$, $L_{2}'$, $K_{0}$
\end{enumerate}
become commutative diagrams. And finally,
\begin{enumerate}
\setcounter{enumi}{8}
\setlength{\itemsep}{0pt}
\item the composites $K_{2}\to K_{1} \to L_{2}'$ and $K_{2} \to
    L_{1}\to L_{2}'$ give the same map on $\Jsharp$, and
\item the composites $L'_{2}\to K_{0}\to K_{2}$ and $L'_{2}\to
    L_{0}\to K_{2}$ give the same map on $\Jsharp$.
\end{enumerate}
\end{theorem}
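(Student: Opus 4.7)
The plan is to reduce each of the ten assertions to one of the two main exact triangle theorems or to the connect-sum calculations of Section~\ref{sec:connect-sums}. The exact triangles (1) and (2) are direct instances of Theorem~\ref{thm:intro-one-bar} with cyclic indices $i=1$ and $i=2$. For (3) and (4), one applies Theorem~\ref{thm:intro-one-bar} and Theorem~\ref{thm:intro-three-bar} respectively to the mirror configuration in which the crossing $L_2$ is replaced by $L'_2$; the proofs of both theorems depend only on the local bifold model inside the ball, so they apply verbatim after reversing the crossing sign.

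For the commutative faces (5)--(8), the task is to verify, face by face, that the composite foam cobordism obtained by following two of the three edges is either foam-isotopic to the cobordism attached to the third edge, or differs from it by a local modification to which the connect-sum propositions of Section~\ref{sec:connect-sums} apply. In face~(5), for example, the composite $K_0 \to K_2 \to K_1$ is a juxtaposition of two standard $1$-handles which, by a foam isotopy inside the ball, can be replaced by the single $1$-handle giving $K_0 \to K_1$. Faces (6)--(8), each of which involves at least one tetrahedral point, are expected to produce a composite foam that differs from the direct cobordism by a connect sum with $\Psi_2$ at a tetrahedral point; Proposition~\ref{prop:sum-with-Psi-2-3}, part (1), then identifies the two induced maps.

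For (9) and (10), the two composite foams from $K_2$ to $L'_2$ (respectively from $L'_2$ to $K_2$) add the same local pieces to the product foam but in different orders, and we compare them by exhibiting an explicit local identification. The likely shape of the comparison is that the two composite foams agree modulo a connect sum with $(S^4, \Psi_2)$ at a tetrahedral point, or with $(S^4, \Psi_0)$ at an interior face point; the relevant propositions of Section~\ref{sec:connect-sums} (Proposition~\ref{prop:sum-with-Psi-2-3} or Proposition~\ref{prop:face-sums}) then show that the two induced maps on $\Jsharp$ agree. Alternatively, once (5)--(8) are in hand, an octahedral-axiom style diagram-chase across the four commutative faces gives the two equalities more or less formally.

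The main obstacle will be the topological identifications in (5)--(10): the relevant composite foams do not embed in $\R^3$ and must be analyzed via schematic pictures analogous to those used for the three-fold composite $\Phi_{3,0}$ in the topology section above. The hardest step is exhibiting the explicit local isotopy, or the connect-sum decomposition with the correct $\Psi_n$, for each composite; once the decomposition is in hand, the propositions of Section~\ref{sec:connect-sums} settle each assertion mechanically. I expect (9) and (10) to be the most delicate, both because the comparison is across distinct intermediate vertices of the octahedron and because it rests on correctly pinpointing which $\Psi_n$ summand is being split off.
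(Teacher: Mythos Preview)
Your overall strategy matches the paper's: the exact triangles reduce to Theorems~\ref{thm:intro-one-bar} and~\ref{thm:intro-three-bar}, and the commutativity statements reduce to the connect-sum propositions of Section~\ref{sec:connect-sums}. However, several of your topological identifications are wrong, and one of them is a genuine error rather than mere vagueness.

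For face~\ref{item:first-comm-face}, your claim that the composite $K_{0}\to K_{2}\to K_{1}$ of two $1$-handles is foam-isotopic to the single $1$-handle $K_{0}\to K_{1}$ is false. The composite surface has larger Euler characteristic than a single saddle; it is the connect sum of the standard cobordism $K_{0}\to K_{1}$ with a copy of $(S^{4},\Psi_{0})$, i.e.\ with an $\RP^{2}$ of self-intersection~$+2$, taken at an interior face point. This is the identification from \cite{KM-unknot}, and the commutativity then follows from the $n=0$ case of Proposition~\ref{prop:face-sums}, not from an isotopy.

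For faces~\ref{item:third-comm-face} and~\ref{item:fourth-comm-face}, your guess that the extra summand is $\Psi_{2}$ glued at a tetrahedral point is off: these composites involve one cobordism between an $L$-web and a $K$-web (which has a seam but no tetrahedral point), and the correct description is a connect sum with $\Psi_{2}$ at a \emph{seam} point, so that Proposition~\ref{prop:seam-sums} (the $n=2$ case) is the relevant input. Only face~\ref{item:second-comm-face} uses the tetrahedral-point sum with $\Psi_{2}$ as you describe.

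Finally, for (9) and (10) your first alternative is the right one: the two composite foams differ by a connect sum with $\Psi_{2}$ at a tetrahedral point, and Proposition~\ref{prop:sum-with-Psi-2-3} finishes the argument. The diagram-chase alternative you mention does not obviously work, because the commutative faces (5)--(8) do not by themselves force the two composites $K_{2}\to L'_{2}$ to agree.
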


\begin{proof}
    The first two items are verbatim restatements of cases of
    Theorem~\ref{thm:intro-one-bar}. The second two cases are cases of
    Theorem~\ref{thm:intro-one-bar} and
    Theorem~\ref{thm:intro-three-bar}. (The pictures are rotated a
    quarter turn relative to the standard pictures. Alternatively,
    these pictures portray the dual of the standard triangles.)

    The composite cobordism $K_{0}\to K_{2}\to K_{1}$ is equal to the
    connect sum $\Sigma\# \Psi_{0}$, where $\Sigma$ is the standard
    cobordism from $K_{0}$ to $K_{1}$ \cite{KM-unknot}. So the
    commutativity in case \ref{item:first-comm-face} follows from
    Proposition~\ref{prop:face-sums}. The next three cases of the
    theorem are similar, except that the connect sums are
    with $\Psi_{2}$ at a tetrahedral point in case
    \ref{item:second-comm-face}, and with $\Psi_{2}$ at a seam point
    in cases  \ref{item:third-comm-face} and
    \ref{item:fourth-comm-face}. So Propositions~\ref{prop:sum-with-Psi-2-3}
    and \ref{prop:seam-sums} deal with these cases.

    In each of the final two cases of the theorem, the first composite
    cobordism is obtained from the second composite by forming a
    connect sum with $\Psi_{2}$ at a tetrahedral point. So these cases
    are also consequences of Proposition~\ref{prop:sum-with-Psi-2-3}.
\end{proof}

\section{Equivalent formulations of the Tutte relation}

The authors conjectured in \cite{KM-jsharp} that if $K$ is a planar
web (i.e. is contained in $\R^{2}\subset \R^{3}$), then the dimension
of $\Jsharp(K)$ is equal to the number of Tait colorings of the
underlying abstract graph. (See Conjecture~\ref{thm:Tait-count} in
\cite{KM-jsharp}.) As explained there, confirming this conjecture
would provide a new proof of Appel and Haken's four-color theorem. If
we write $\tau(K)$ for the number of Tait colorings of $K$, then
$\tau$ is uniquely characterized, for planar webs, by the following
properties:

\begin{enumerate}
\item $\tau(K)=3$ if $K$ is a circle;
\item $\tau(K)=0$ if $K$ has a bridge;
\item $\tau$ is multiplicative for disjoint unions of planar webs;
\item $\tau$ satisfies the ``Tutte relation'', namely if $K_{0}$,
    $K_{1}$, $L_{0}$, $L_{1}$ are planar webs which differ only in a
    ball, in the following manner,
    \[
              K_{0}=\mathfig{0.079}{figures/Graph-Res0}, \qquad
              K_{1}=\mathfig{0.079}{figures/Graph-Res1}, \qquad
              L_{0}=\mathfig{0.079}{figures/Graph-I}, \qquad
              L_{1}=\mathfig{0.079}{figures/Graph-H},
    \]
    then
    \begin{equation}
        \label{eq:Tutte-relation}
         \tau( K_{0} ) -\tau(K_{1}) + \tau(L_{0}) - \tau(L_{1})=0.
    \end{equation}
\end{enumerate}

The first three of these properties hold also for the quantity $\dim
\Jsharp(K)$, for planar webs $K$. They are proved in
\cite{KM-jsharp}. So the question of whether $\dim \Jsharp(K)$ is
equal to the number of Tait colorings is equivalent to the following
conjecture:

\begin{conjecture}\label{conj:Tutte}
    If $K_{0}$, $K_{1}$, $L_{0}$, $L_{1}$ are three planar webs
    differing only in a ball as shown above, then 
\begin{equation}\label{eq:Tutte-Jsharp}
 \dim \Jsharp( K_{0} ) -\dim \Jsharp(K_{1}) 
 + \dim \Jsharp(L_{0}) - \dim \Jsharp(L_{1})=0.
\end{equation}
\end{conjecture}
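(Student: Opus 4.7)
The plan is to derive the Tutte relation from the octahedron of Theorem~\ref{thm:octahedron-statement}, which is the only tool in the paper that connects all of $K_0, K_1, L_0, L_1$ with a common auxiliary. Setting $k_i = \dim \Jsharp(K_i)$, $l_i = \dim \Jsharp(L_i)$ and $l'_2 = \dim \Jsharp(L'_2)$, I would first translate each of the four exact triangles of the octahedron into the standard identity $\dim A - \dim B + \dim C = 2\dim(\mathrm{image})$, valid for any exact triangle of $\F$-vector spaces. Applied to the two triangles sharing $L'_2$, namely $K_0 \to K_1 \to L'_2 \to K_0$ and $L_0 \to L_1 \to L'_2 \to L_0$, and added together, these give
\[
k_0 - k_1 + l_0 - l_1 \;=\; 2\bigl(\dim \mathrm{im}(L'_2 \to K_0) + \dim \mathrm{im}(L'_2 \to L_0) - l'_2\bigr).
\]
Since the kernels of $L'_2 \to K_0$ and $L'_2 \to L_0$ are respectively $\mathrm{im}(K_1 \to L'_2)$ and $\mathrm{im}(L_1 \to L'_2)$, Conjecture~\ref{conj:Tutte} is equivalent to the single dimensional identity
\[
\dim \mathrm{im}(K_1 \to L'_2) + \dim \mathrm{im}(L_1 \to L'_2) \;=\; l'_2.
\]

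To attack this identity, I would bring in the commutative faces of the octahedron, in particular face (\ref{item:third-comm-face}), which gives the factorization $K_1 \to L'_2 \to L_0$ of the direct cobordism $K_1 \to L_0$, and face (\ref{item:fourth-comm-face}), which factors $L_1 \to K_0$ as $L_1 \to L'_2 \to K_0$. These factorizations, combined with the connect-sum calculus of Section~\ref{sec:connect-sums}, constrain how the subspaces $\mathrm{im}(K_1 \to L'_2)$ and $\mathrm{im}(L_1 \to L'_2)$ sit inside $\Jsharp(L'_2)$; the target identity asks that their ranks are in a precise complementary position, with dimensions summing exactly to $l'_2$.

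The main obstacle is that the octahedral data alone, even with all four commutative triangles, underdetermines these dimensions. Exact triangles of ungraded $\F$-vector spaces yield only congruences modulo $2$, and octahedral commutativity constrains compositions of maps but not their absolute ranks. Closing the gap appears to require genuinely new input about $\Jsharp$ on \emph{planar} webs. The most natural candidate is a concentration statement: that $\Jsharp(K)$ for planar $K$ is supported in a single parity of a $\Z/2$-grading on $\Jsharp$ coming from the Floer mod-$2$ index, under which the standard cobordism maps in the octahedron are all homogeneous of compatible parity. Such a grading would promote the alternating-sum identities above to honest Euler-characteristic identities and immediately yield Tutte from the existing exact triangles. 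However, the concentration statement is itself very deep: as the text observes, confirming it would give a new proof of the four-colour theorem. Thus the strategy cleanly reduces Conjecture~\ref{conj:Tutte} to a precise structural question about the grading and planar-web support of $\Jsharp$, but the principal difficulty remains untouched by the exact-triangle machinery developed in this paper.
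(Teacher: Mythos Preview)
The statement you were given is a \emph{conjecture}, and the paper does not prove it; indeed the paper explains that a proof would yield a new proof of the four-colour theorem. So there is no ``paper's own proof'' to compare against, and your proposal is honest in reaching the same conclusion: the octahedral machinery reduces the Tutte relation to a rank identity but does not establish that identity. Your reduction to
\[
\dim \mathrm{im}(K_1 \to L'_2) + \dim \mathrm{im}(L_1 \to L'_2) \;=\; \dim \Jsharp(L'_2)
\]
is correct and is a genuine reformulation, parallel in spirit to what the paper does in its final lemma. The paper instead works through the $K_{2}$ vertex rather than $L'_{2}$, and shows that the left side of \eqref{eq:Tutte-Jsharp} equals $2\bigl(\rank(a\comp\kappa)-\rank(\lambda\comp b)\bigr)$ and also $2\bigl(\rank(a)-\rank(b)\bigr)$, where $a:K_{0}\to K_{2}$ and $b:K_{2}\to K_{1}$. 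Both your reduction and the paper's are valid equivalent forms of the conjecture; neither closes the gap.

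Your diagnosis of the obstacle is also essentially what the paper says. One point worth noting: the paper's Remarks go further and give evidence that the natural strengthening (exactness of the $4$-periodic complex, i.e.\ vanishing of $a\comp\kappa$ and $\lambda\comp b$) \emph{fails} in general, citing a computation for the dodecahedron where $\rank(a\comp\kappa)$ appears to be $5$. So the ``complementary position'' you are hoping for is not forced by exactness alone, and your proposed route via a $\Z/2$-grading concentration statement is speculative and does not appear in the paper; as you yourself observe, such a statement would be at least as deep as the conjecture itself.
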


The four webs that appear in this conjecture appear also as the four
vertices of the central rectangle in the octahedral diagram,
Figure~\ref{fig:J-octahedron}. We reproduce that part of the diagram here:
\begin{equation}\label{eq:Tutte-diagram}
\begin{array}{c}
    \includegraphics[scale=0.46]{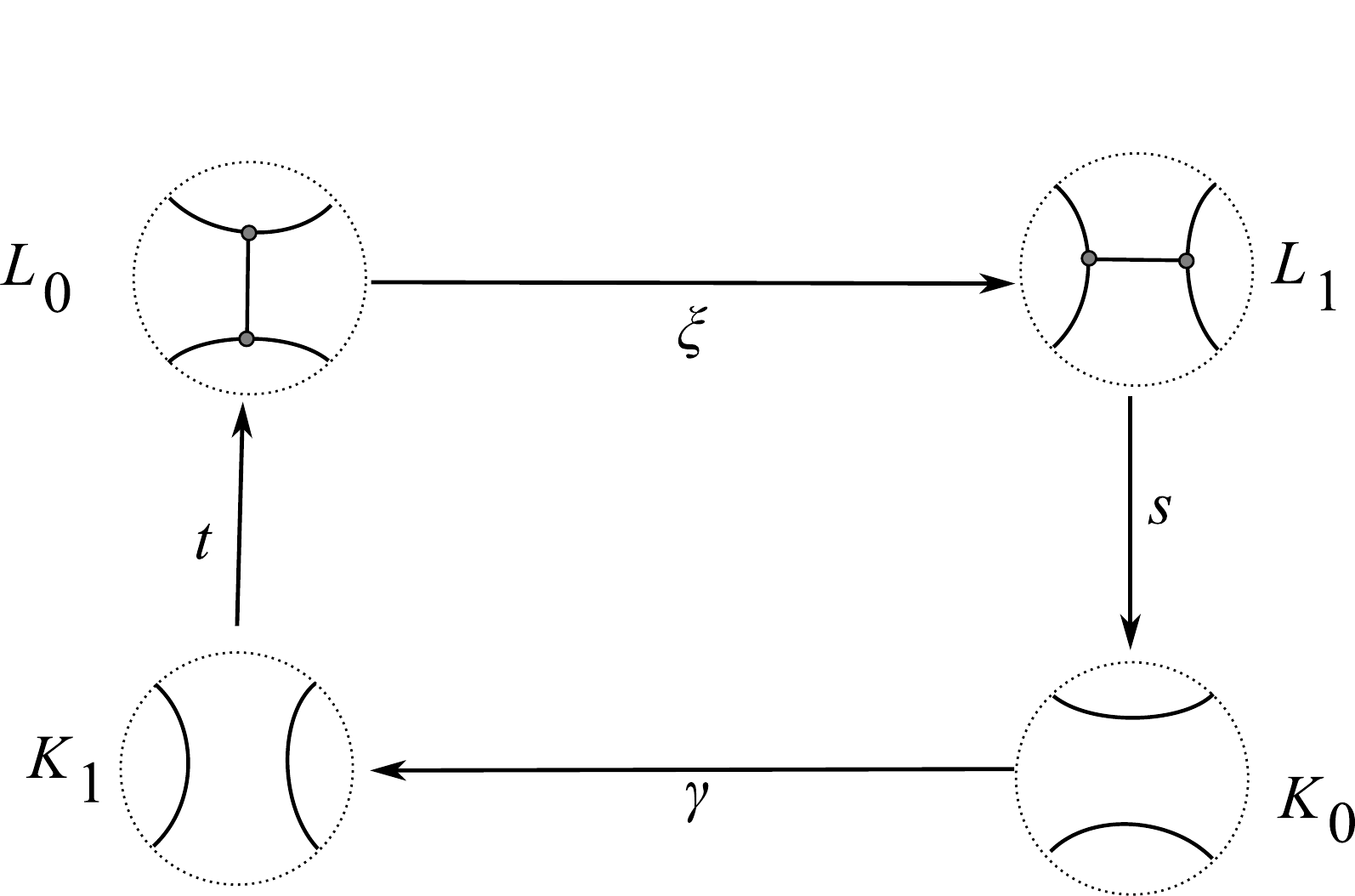} \end{array}
\end{equation}

\begin{lemma}
    In the rectangle above, the composite of any two consecutive foams
    is the zero map on $\Jsharp$. So the vector spaces
    $\Jsharp(K_{0})$, $\Jsharp(K_{1})$, $\Jsharp(L_{0})$,
    $\Jsharp(L_{1})$, together with the maps between them, form a
    chain complex which is periodic mod $4$.
\end{lemma}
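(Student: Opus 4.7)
The plan is to deduce the lemma from the octahedral structure already established in Theorem~\ref{thm:octahedron-statement}, rather than by analyzing the composite foams directly. The four edges of the Tutte rectangle in \eqref{eq:Tutte-diagram} are precisely the equatorial edges of the octahedron in Figure~\ref{fig:J-octahedron}. Each such edge is the base edge of one of the exact triangles (1)--(4) of Theorem~\ref{thm:octahedron-statement}, with apex either $K_{2}$ or $L'_{2}$; this apex alternates as one travels around the equator.

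For each of the four consecutive composites $A \to B \to C$ around the rectangle, my strategy is a uniform two-step argument. First, I would invoke one of the commutative faces (5)--(8) of Theorem~\ref{thm:octahedron-statement} to rewrite one of the two edges $A\to B$ or $B\to C$ as a two-step composite through the apex $X$ of the exact triangle containing the other edge; such a rewriting is always possible because the commutative faces of the octahedron have exactly the equatorial edges as their bases. Second, after substitution, the three-fold composite that results contains a consecutive pair of arrows coming from an exact triangle, which vanishes by exactness (image contained in kernel).

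As an illustrative case, consider the composite $\Jsharp(K_{0}) \to \Jsharp(K_{1}) \to \Jsharp(L_{0})$. The commutativity of face (5) on $K_{0},K_{2},K_{1}$ lets us factor the first edge, rewriting the composite as $\Jsharp(K_{0}) \to \Jsharp(K_{2}) \to \Jsharp(K_{1}) \to \Jsharp(L_{0})$; the last two arrows are consecutive in the exact triangle (1) and so compose to zero. The remaining three composites around the rectangle are handled in precisely the same way, by pairing $K_{1}\to L_{0}\to L_{1}$ with face (6) and triangle (1), $L_{0}\to L_{1}\to K_{0}$ with face (8) and triangle (4), and $L_{1}\to K_{0}\to K_{1}$ with face (5) and triangle (2). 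I expect no substantive obstacle beyond checking case-by-case that the face and triangle chosen are compatible in the sense just described; the geometric content is entirely contained in Theorem~\ref{thm:octahedron-statement}, and no further moduli-space analysis is required.
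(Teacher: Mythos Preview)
Your argument is correct and is essentially identical to the paper's own proof: the paper treats the composite $t\circ\gamma$ by writing $\gamma=b\circ a$ via the commutative face $(K_{0},K_{2},K_{1})$ and then using $t\circ b=0$ from the exact triangle $(b,t,q)$, which is exactly your illustrative case, and then remarks that the other three composites are handled ``by essentially the same argument.'' Your explicit face/triangle pairings for the remaining three cases are all valid.
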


\begin{proof}
    Referring to Figure~\ref{fig:J-octahedron} and
    Theorem~\ref{thm:octahedron-statement}, we see that (with the
    application of $\Jsharp$ implied), $t\comp\gamma = t\comp b \comp
    a$, because $\gamma=b\comp a$. On the other hand, $t\comp b=0$,
    because these are two sides of an exact triangle. This shows that
    $t\comp\gamma=0$, and essentially the same argument deals with the
    composites at the other three vertices.
\end{proof}

We can now interpret Conjecture~\ref{conj:Tutte} as asserting that the Euler
characteristic of the $4$-periodic complex \eqref{eq:Tutte-diagram} is zero.
 Since the Euler
characteristic can be computed equally as the alternating sum of the
dimensions of the chain groups or as the alternating sum of the
dimensions of the homology groups, the left-hand side of
\eqref{eq:Tutte-Jsharp} can be expressed also as
\begin{equation}\label{eq:Tutte-Euler}
  \left | \frac{\ker(\gamma) }{ \im(s) }\right  | - 
   \left | \frac{\ker(t) }{ \im(\gamma) }\right | +
   \left | \frac{\ker(\xi) }{ \im(t) } \right |  -
 \left | \frac{\ker(s) }{ \im(\xi) } \right |,
\end{equation}
where $|V|$ denotes the dimension of the vector space $V$, and
$\Jsharp$ is understood.

\begin{lemma}
    In the $4$-periodic complex \eqref{eq:Tutte-diagram}, the homology
   groups at diametrically opposite corners are equal. Furthermore,
   the Euler characteristic \eqref{eq:Tutte-Euler} is equal to
    \[2\bigl(\rank(a\comp \kappa) -
   \rank(\lambda\comp b)\bigr),\] and also equal to \[2\bigl(\rank(a) -
   \rank(b)\bigr).\]
   Here $a$, $b$, $\kappa$ and $\lambda$ are the maps corresponding to
   the standard cobordisms in Figure~\ref{fig:J-octahedron}, and the
   application of $\Jsharp$ is implied.
\end{lemma}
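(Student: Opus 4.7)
The plan is to identify each of the four homology groups of the $4$-periodic complex, and each of the four ranks in the statement, with the dimension of an explicit subspace of one of the two ``apex'' Floer groups $\Jsharp(K_{2})$ and $\Jsharp(L'_{2})$, and then read off the required equalities.

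First I would record the factorizations provided by the commutative faces of Theorem~\ref{thm:octahedron-statement}: the rectangle maps $s\colon\Jsharp(K_{0})\to\Jsharp(K_{1})$ and $t\colon\Jsharp(L_{0})\to\Jsharp(L_{1})$ both factor through $\Jsharp(K_{2})$, as $s=\kappa\comp\mu$ and $t=\nu\comp\lambda$ for the remaining edges $\mu\colon K_{0}\to K_{2}$ and $\nu\colon K_{2}\to L_{1}$ of the exact triangles at $K_{2}$; and the maps $\gamma=b\comp a$ and $\xi=b'\comp a'$ factor analogously through $\Jsharp(L'_{2})$. The exactness of the four octahedral triangles then supplies identifications such as $\ker(a)=\im(s)$, $\im(\kappa)=\ker(\gamma)$, $\ker(b)=\im(a')$, and their cyclic analogues at the other apex.

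Using these, each homology group of the $4$-periodic complex collapses to an intersection of two subspaces in an apex. At $K_{1}$, the chase $\ker(\gamma)/\im(s)=a(\ker\gamma)=\im(a)\cap\ker(b)$, together with $\ker(b)=\im(a')$, yields
\[
    H_{K_{1}} \;=\; \bigl|\im(a)\cap\im(a')\bigr| \subset \Jsharp(L'_{2}),
\]
and the identical chase begun at $L_{1}$ gives the same intersection, so $H_{K_{1}}=H_{L_{1}}$. Symmetrically, $H_{K_{0}}=H_{L_{0}}=|\im(\mu)\cap\im(\lambda)|\subset\Jsharp(K_{2})$. This establishes the opposite-corner equality and reduces the Euler characteristic~\eqref{eq:Tutte-Euler} to $\chi = 2(H_{K_{1}}-H_{K_{0}})$. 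Using $\im(\kappa)=\ker(\gamma)=a^{-1}(\im a')$ I next compute $\im(a\comp\kappa)=\im(a)\cap\im(a')$, hence $\rank(a\comp\kappa)=H_{K_{1}}$; the mirror computation gives $\rank(\lambda\comp b)=H_{K_{0}}$, which proves the first formula.

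For the second formula I would apply rank--nullity to the exact triangles at $L'_{2}$ to obtain $\rank(a)=\dim\Jsharp(K_{1})-\rank(s)$ and $\rank(b)=\dim\Jsharp(L_{0})-\rank(t)$, and expand $H_{K_{1}}=\dim\Jsharp(K_{1})-\rank(\gamma)-\rank(s)$ and $H_{L_{0}}=\dim\Jsharp(L_{0})-\rank(\gamma)-\rank(t)$; the $\rank(\gamma)$ terms cancel in the difference, giving $H_{K_{1}}-H_{L_{0}}=\rank(a)-\rank(b)$ and hence $\chi = 2(\rank(a)-\rank(b))$. The only real obstacle is keeping track of which of the four exact triangles and which commutative face justifies each kernel/image substitution; no input beyond Theorem~\ref{thm:octahedron-statement} is required.
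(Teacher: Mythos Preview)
Your argument is correct and complete. It differs from the paper's route in one noteworthy respect. The paper computes the four homology dimensions as the four ranks $\rank(a\comp\kappa)$, $\rank(\lambda\comp b)$, $\rank(q\comp\zeta)$, $\rank(\eta\comp r)$, and then invokes the last two items of Theorem~\ref{thm:octahedron-statement} (the equality of the two composites $K_{2}\to L'_{2}$ and of the two composites $L'_{2}\to K_{2}$) to pair them up and obtain the opposite-corner equality. Your approach instead pushes each homology group into an apex and identifies it with an intersection of images, $\im(a)\cap\im(a')\subset\Jsharp(L'_{2})$ or $\im(\mu)\cap\im(\lambda)\subset\Jsharp(K_{2})$; the opposite-corner equality then drops out because both corners land on the \emph{same} intersection. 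This uses only the exactness of the four triangles and the commutativity of the four faces, so it bypasses items~(9) and~(10) of Theorem~\ref{thm:octahedron-statement} entirely. For the second formula, both arguments are rank--nullity bookkeeping from the exact triangles; yours is marginally more direct, while the paper's uses the identity $2\rank(f)=|A|+|B|-|C|$ for a map $f$ in an exact triangle on $A,B,C$.
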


\begin{proof}
    From the exactness of the triangles of maps $(\lambda, \kappa,
    \gamma)$ and $(a, r, s)$, we have $\ker(\gamma)=\im(\kappa)$ and
    $\im(s)=\ker(a)$. So
    \[
               \frac{\ker(\gamma)}{\im(s)} = \frac{\im(\kappa)}{\ker(a)}.
    \]
     At the same time, we learn that $\ker(a)\subset\im(\kappa)$, so
     that the dimension of $\im(\kappa)/\ker(a)$ is equal to the rank
     of $a\comp\kappa$. Thus
     \[
                  \left|  \frac{\ker(\gamma)}{\im(s)}\right| =
                  \rank(a\comp\kappa).
     \]
      A similar discussion can be applied to each of the four terms in
      \eqref{eq:Tutte-Euler}, so the dimensions of the four quotients
      that appear there are respectively
\[ 
                \rank(a \comp \kappa), \quad
                \rank(\lambda \comp b), \quad
                \rank(q \comp \zeta), \quad
                 \rank(\eta \comp r).
\]
The last two parts of Theorem~\ref{thm:octahedron-statement} say that
$\lambda\comp b = \eta\comp r$ and $a\comp\kappa=q\comp\zeta$. So
alternate terms of the above four are equal. This verifies the first
assertion in the lemma, and also shows that the Euler characteristic
is equal to $ 2(\rank(a\comp \kappa) -
   \rank(\lambda\comp b))$.  

From the exact triangle $(a,r,s)$, we have
\[
          2\rank(a) = \left |\mathfig{0.079}{figures/Graph-Res0}\right| - 
                                 \left
                                     |\mathfig{0.079}{figures/Graph-H}\right|
                                   +  \left |\mathfig{0.079}{figures/Graph-X}\right|,                         
\]
while from the exact triangle $(b,t,q)$, we have
\[
         2\rank(b) = \left |\mathfig{0.079}{figures/Graph-Res1}\right| - 
                                 \left
                                     |\mathfig{0.079}{figures/Graph-I}\right|
                                   +  \left |\mathfig{0.079}{figures/Graph-X}\right|.                        
\]
Taking the difference of these last two equalities, we obtain
\[
           2 \rank(a) - 2\rank(b) = 
 \left |\mathfig{0.079}{figures/Graph-Res0}\right| - 
                                 \left
                                     |\mathfig{0.079}{figures/Graph-H}\right|
                  +
                                 \left
                                     |\mathfig{0.079}{figures/Graph-I}\right|
                -  \left |\mathfig{0.079}{figures/Graph-Res1}\right| .
\]
This is the last assertion of the lemma.
\end{proof}

\begin{remarks}
    Since Conjecture~\ref{conj:Tutte} asserts the vanishing of an
    Euler characteristic for planar webs, it is natural to ask whether
    something stronger is true, namely that the $4$-periodic sequence of
    maps \eqref{eq:Tutte-diagram} is exact. By the proof of the above
    lemma, this would be equivalent to the vanishing of the composites
    $a\comp \kappa$ and $\lambda\comp b$. Although this holds in
    simple cases, it does not appear to be true in
    general. Calculations for the case that $L_{0}$ is the
    $1$-skeleton of a dodecahedron (in the natural planar projection)
    suggest that the rank of $a\comp \kappa$ is $5$ in this case
    \cite{KM-dodecahedron-Tutte}. From this it follows (by the lemma)
    that the Euler characteristic of the complex is at most $10$. The
    webs $K_{0}$, $K_{1}$ and $L_{1}$ in this example are ``simple
    webs'' in the sense of \cite{KM-jsharp}, so $\Jsharp$ is easily
    computed for these three. The inequality on the Euler
    characteristic then tells us that the dimension of $\Jsharp$ of
    the dodecahedral graph is at most $70$. In the other direction, a
    different calculation  \cite{KM-dodecahedron-Tutte}
    leads to a lower bound of $58$ on the
    dimension. So we have
\[
                     58 \le \dim \Jsharp(\text{Dodecahedron}) \le 70.
\]
  The number of Tait colorings, on the other hand, is $60$. 

  A slightly sharper upper bound for the dimension of $\Jsharp$ of the
  dodecahedron arises by a different argument. In
  \cite{KM-dodecahedron-rep}, the representation variety
  $\Rep^{\sharp}(K)$ is described for the dodecahedron: it consists
  of $10$ copies of the flag manifold and two copies of $\SO(3)$. The
  Chern-Simons functional is Morse-Bott along $\Rep^{\sharp}(K)$, and
  it follows that there is a perturbation of the Chern-Simons
  functional having exactly $68$ critical points, leading to a
  bound of $68$ on the rank of $\Jsharp$.  From this point of view, 
  the question of whether the rank is
  strictly less than $68$ is therefore the question of whether there
  are any non-trivial differentials in the complex, for this
  particular perturbation.
\end{remarks}

\bibliographystyle{abbrv}
\bibliography{fcs}

\begin{thebibliography}{1}

\bibitem{Abreu}
M.~Abreu.
\newblock K\"ahler metrics on toric orbifolds.
\newblock {\em J. Differential Geom.}, 58(1):151--187, 2001.

\bibitem{AHS}
M.~F. Atiyah, N.~J. Hitchin, and I.~M. Singer.
\newblock Self-duality in four-dimensional {R}iemannian geometry.
\newblock {\em Proc. Roy. Soc. London Ser. A}, 362(1711):425--461, 1978.

\bibitem{Bryant}
R.~L. Bryant.
\newblock Bochner-{K}\"ahler metrics.
\newblock {\em J. Amer. Math. Soc.}, 14(3):623--715 (electronic), 2001.

\bibitem{KMOS}
P.~Kronheimer, T.~Mrowka, P.~Ozsv{\'a}th, and Z.~Szab{\'o}.
\newblock Monopoles and lens space surgeries.
\newblock {\em Ann. of Math. (2)}, 165(2):457--546, 2007.

\bibitem{KM-dodecahedron-Tutte}
P.~B. Kronheimer and T.~S. Mrowka.
\newblock {Foam calculations for the $\SO(3)$ instanton homology of the
  dodecahedron}.
\newblock In preparation.

\bibitem{KM-jsharp}
P.~B. Kronheimer and T.~S. Mrowka.
\newblock {Tait colorings, and an instanton homology for webs and foams}.
\newblock Preprint, 2015.

\bibitem{KM-dodecahedron-rep}
P.~B. Kronheimer and T.~S. Mrowka.
\newblock {The $\SO(3)$ representation variety for the dodecahedral web}.
\newblock In preparation.

\bibitem{KM-unknot}
P.~B. Kronheimer and T.~S. Mrowka.
\newblock Khovanov homology is an unknot-detector.
\newblock {\em Publ. Math. IHES}, 113:97--208, 2012.

\bibitem{OS-double-covers}
P.~Ozsv{\'a}th and Z.~Szab{\'o}.
\newblock On the {H}eegaard {F}loer homology of branched double-covers.
\newblock {\em Adv. Math.}, 194(1):1--33, 2005.

\end{thebibliography}

\end{document}